\numberwithin{equation}{section}
\newcommand{\theoname}{Theorem}
\newcommand{\lemmname}{Lemma}
\newcommand{\coroname}{Corollary}
\newcommand{\propname}{Proposition}
\newcommand{\definame}{Definition}
\newcommand{\remkname}{Remark}
\newcommand{\explname}{Example}
\theoremstyle{plain}
\newtheorem{theorem}{\theoname}[section]
\newtheorem{lemma}[theorem]{\lemmname}
\newtheorem{proposition}[theorem]{\propname}
\theoremstyle{definition}
\newtheorem{definition}[theorem]{\definame}
\newtheorem{remark}[theorem]{\remkname}
\newtheorem{example}[theorem]{\explname}
\newlist{hypothesis}{enumerate}{1}
\setlist[hypothesis]{label={\textup{(H\arabic*)}}, ref={(H\arabic*)}, leftmargin=*, widest*=10}
\newenvironment{block}%
  {\list{}{\leftmargin=.5in\rightmargin=.5in}  \item[]  }%
  {\endlist}
\def\dd{{\,\rm d}}
\newcommand{\eqdef}{\ensuremath{\stackrel{\mbox{\upshape\tiny def.}}{=}}}
\newcommand{\norm}[1]{\left\lVert#1\right\rVert}
\newcommand{\inner}[1]{\left\langle#1\right\rangle}
\def\1B{{\bf  1}}
\def\dist{\mathop{\rm dist}}
\def\dom{\mathop{{\rm dom}}}
\def\supp{\mathop{\rm supp}}
\newcommand{\cvweak}[1]{\xrightharpoonup[#1]{}}
\newcommand{\cvstrong}[2]{\xrightarrow[#1]{#2}}
\DeclareMathOperator*{\minimize}{minimize}
\def\inf{\mathop{\rm inf}}
\def\sup{\mathop{\rm sup}}
\def\min{\mathop{\rm min}}
\def\max{\mathop{\rm max}}
\def\argmin{\mathop{\rm argmin}}
\newcommand{\mres}{\mathbin{\vrule height 1.6ex depth 0pt width
		0.13ex\vrule height 0.13ex depth 0pt width 1.3ex}}
\newcommand{\X}{\mathcal{X}}
\newcommand{\Y}{\mathcal{Y}}
\newcommand{\gammab}{\boldsymbol{\gamma}}
\newcommand{\nub}{\boldsymbol{\nu}}
\newcommand{\mub}{\boldsymbol{\mu}}
\DeclareMathOperator{\capp}{\mathrm{}{cap}}
\title{From Nash to Cournot--Nash equilibria via \texorpdfstring{$\Gamma$}{Gamma}-convergence}
\author{João Miguel Machado}
\address{Lagrange Mathematical and Computational Center\\
103 rue de Grenelle\\
Paris, 75007}
\email{joao-miguel.machado@ceremade.dauphine.fr}
\author{Guilherme Mazanti}
\address{Université Paris-Saclay, CNRS, CentraleSupélec, Inria, Laboratoire des signaux et systèmes \& Fédération de Mathématiques de CentraleSupélec, 91190, Gif-sur-Yvette, France.}
\email{guilherme.mazanti@inria.fr}
\author{Laurent Pfeiffer}
\address{Université Paris-Saclay, CNRS, CentraleSupélec, Inria, Laboratoire des signaux et systèmes \& Fédération de Mathématiques de CentraleSupélec, 91190, Gif-sur-Yvette, France.}
\email{laurent.pfeiffer@inria.fr}
\begin{document}

\begin{abstract}
This work addresses the issue of the convergence of an $N$-player game towards a limit model involving a continuum of players, as the number of agents $N$ goes to infinity. More precisely, we investigate the convergence of Nash equilibria to equilibria of Cournot--Nash type. When the cost function of the players is the first variation of some potential function, equilibria can be characterized by a stationarity condition, satisfied in particular by the minimizers of the potential. This characterization is shown under low regularity assumptions. We give a positive answer to the convergence question when players interact in a pairwise fashion; in this case we show that the original sequence of $N$-player games also admit a potential structure and prove that their corresponding potential functions converge in the sense of $\Gamma$-convergence to the potential function of the limit game. Aligned with the strong characterization of equilibria, we can show that any convergent sequence of Nash equilibria has a Cournot--Nash equilibrium as limit. 

\bigskip

\noindent\textbf{Keywords.} $\Gamma$-convergence, mean field limit, potential games, Nash equilibria, Cournot--Nash equilibria, random measures.

\medskip

\noindent\textbf{2020 Mathematics Subject Classification.} 49N80, 49J53, 91A06, 91A07, 28A33, 49K27.
\end{abstract}

\maketitle

\setcounter{tocdepth}{1}
\tableofcontents

\section{Introduction}\label{sec:introduction}
This work deals with a class of $N$-player games. Our main objective is to investigate the convergence of Nash equilibria, as $N$ goes to infinity, to Cournot--Nash equilibria of a game with a continuum of players (see Definition~\ref{def:equ_CournotNash}).

Let us first describe the class of $N$-player games of interest in this paper. Let $\X$ and $\Y$ be Polish spaces, that is, separable and completely metrizable topological spaces, representing the space of types of players and of admissible strategies, respectively. Given a positive integer $N$ and a tuple ${\left(x_i\right)}_{i = 1}^N \subset \X$, where $x_i$ corresponds to the type of player $i$, we assume that the goal of each player is to minimize the following cost:
\begin{equation}\label{eq.toy_game}
    \minimize_{y_i \in \Y}
    g_N(y_i;y_{-i}) 
    \eqdef 
    c(x_i,y_i) + L(y_i)
    + 
    \frac{2}{N}\sum_{j \neq i}H(y_j,y_i),
\end{equation}
where, as standard in game theory, we use $y_{-i}$ to denote the tuple $(y_j)_{j\neq i}$. We use the terminology \emph{pairwise} to describe the fact that $g_N$ involves the average value of the quantity $H(y_j,y_i)$, modelling the specific interaction between the pair of  players $i$ and $j$. Note that the functions $c$, $L$, and $H$ are assumed to be the same for all players, the difference between players being represented only by their types $x_1, \dotsc, x_N$. A profile of strategies ${\left(y_i\right)}_{i = 1}^N$ satisfying the minimization conditions from~\eqref{eq.toy_game} simultaneously for all $i = 1,\dots, N$ is called a Nash equilibrium in pure strategies. The formulation in mixed strategies consists of a relaxation of this game in which, for each player $i$, we replace the set of strategies $\Y$ for the space of Radon of measures $\mathscr{P}(\Y)$ and the minimization becomes 
\begin{equation}\label{eq.toy_game_mixed}
    \minimize_{\nu_i \in \mathscr{P}(\Y)}
    g_N(\nu_i;\nu_{-i}) 
    \eqdef 
    \int_{\Y}\left(c(x_i,y_i) + L(y_i)\right)\dd\nu_i(y_i)
    + 
    \frac{2}{N}\sum_{j \neq i}
    \int_{\Y\times\Y}
    H\dd \nu_j\otimes\nu_i.
\end{equation}

Such types of $N$-player games become rapidly intractable as $N$ grows and, for this reason, one may wonder whether a suitable notion of equilibrium in a setting with infinitely many players will provide information for the behavior of equilibria when $N$ is large but still finite. This question is further motivated by the fact that, in many economic and social scenarios, the number of agents acting in the game reaches this regime of intractability, and hence it is of great theoretical and practical importance to be able to rigorously describe the model with infinitely many players as the limit of a sequence of $N$-player games. In the present work we consider a fixed distribution of types of players $\mu \in \mathscr{P}(\X)$. We will call Cournot--Nash equilibrium (see Definition~\ref{def:equ_CournotNash}) a measure $\gamma \in \mathscr{P}(\X\times\Y)$ such that ${(\pi_\X)}_\sharp \gamma = \mu$ and such that
\begin{equation}\label{eq.toy_gameCournotNash}
    y \in \argmin_{y' \in \Y} \Bigg\{ c(x,y') + L(y') + 2\int_\Y H(y',\bar y) \dd \nu(\bar y) \Bigg\}, \quad
    \text{ for $\gamma$-a.e. } (x,y)\in \X\times\Y,
\end{equation}
where $\nu = {(\pi_\Y)}_\sharp \gamma$.

In the category of differential games, the notion of equilibrium in a continuum-player setting was independently introduced around the same time by Caines, Huang, and Malhamé in \cite{Huang2003Individual, Huang2007Large, Huang2006Large} and by Lasry and Lions in \cite{Lasry2006JeuxI, Lasry2006JeuxII, Lasry2007Mean}, the \emph{Mean Field Games} (MFGs). In this formulation, an equilibrium is characterized by a pair of coupled PDEs, one describing the evolution of the distribution of players and another encoding the optimality conditions of each player’s control problem. The convergence of $N$-player games to MFGs was a primary motivation since the inception of the theory, as discussed in Lions’ lectures at Coll\`ege de France \cite{Lions2006Jeux} and the notes by Cardaliaguet \cite{cardaliaguet2010notes}. Several convergence results are now available, mainly for second-order MFGs \cite{Kolokoltsov2014Rate, cardaliaguet2017convergence, Lauriere2022Convergence, Lacker2021Mean, Cardaliaguet2019Master}, though many questions remain open, particularly for first-order MFGs.

More generally, the concept of games with a continuum of players dates back to the 1960s in the economics literature, with seminal contributions by Aumann \cite{aumann1964markets,aumann1966existence,mas1984theorem}. Schmeidler \cite{schmeidler1973equilibrium} studied such models with the goal of proving the existence of pure-strategy equilibria.
Subsequent works~\cite{hart1974equilibrium,mas1984theorem} relaxed this notion to allow probability measures over strategies, giving rise to what is now known as a \emph{Cournot--Nash equilibrium} (see Definition~\ref{def:equ_CournotNash}).

A similar relaxation also played an important part in the development of the \textit{optimal transportation problem} (OT), described as follows: given two Polish spaces $(\X, d_\X)$ and $(\Y, d_\Y)$, a pair of Borel probability measures $(\mu, \nu) \in \mathscr{P}(\X) \times \mathscr{P}(\Y)$, and a transportation cost $c\colon\X\times \Y \to \mathbb{R}\cup\{+\infty\}$, one seeks to solve the minimization problem
\begin{equation*}
    \mathcal{W}_c(\mu,\nu)
    \eqdef 
    \min_{\gamma \in \Pi(\mu, \nu)} \int_{\X \times \Y} c(x,y) \dd \gamma.
\end{equation*}
The minimum on the right side is called Kantorovitch formulation~\cite{kantorovich1942translocation}, which is taken among all couplings $\gamma$ of $\mu$ and $\nu$, \emph{i.e.}, $\gamma$ is chosen in
\begin{equation}
    \Pi(\mu, \nu) 
    \eqdef
    \left\{
        \gamma \in \mathscr{P}(\X\times \Y): 
        (\pi_\X)_\sharp \gamma = \mu, \  (\pi_\Y)_\sharp \gamma = \nu      
    \right\},
\end{equation}
the space of probability measures in the product space $\X \times \Y$ whose marginals are $\mu$ and $\nu$. 

In the context of the games \eqref{eq.toy_game} and \eqref{eq.toy_game_mixed}, recall that $\X$ is the space of types of players and $\Y$ is the space of admissible strategies for the players. Given a distribution of types of players $\mu \in \mathscr{P}(\X)$ and a distribution of strategies $\nu \in \mathscr{P}(\Y)$, a coupling $\gamma \in \Pi(\mu,\nu)$ represents the joint distribution of players and strategies, \emph{i.e.}, given $A \times B \subset \X \times \Y$, the quantity $\gamma(A\times B)$ represents the probability that a player has type in $A$ and chooses a strategy in $B$.

To introduce the notion of Cournot--Nash equilibrium, we consider a more general model than \eqref{eq.toy_gameCournotNash}, in which a player of type $x \in \X$ wishes to choose a strategy $y \in \Y$ in order to minimize the cost $\Phi(x, y, \nu)$, where $\nu \in \mathscr P(\Y)$ describes the mean field of strategies of the agents and $\Phi \colon \X \times \Y \times \mathscr{P}(\Y) \to \mathbb{R}\cup \{+\infty\}$ is a given cost function. 
 
\begin{definition}\label{def:equ_CournotNash}
Let $\mu \in \mathscr P(\X)$ and $\Phi\colon\X\times\Y\times\mathscr{P}(\Y) \to \mathbb R \cup \{+\infty\}$. We say that a probability measure $\gamma \in \mathscr P(\X \times \Y)$ is a \emph{Cournot--Nash equilibrium} for the game with cost $\Phi$ and with distribution of types $\mu$ if $(\pi_{\X})_{\sharp} \gamma = \mu$ and $\gamma$ satisfies the \emph{equilibrium condition} 
\begin{equation}\label{eq:CN}
\gamma 
\left(
\left\{
(x,y) \in \X\times \Y : 
y \in \argmin_{y' \in \Y} 
\Phi(x,y',\nu)
\right\}
\right) = 1,
\end{equation}
where $\nu = (\pi_{\Y})_{\sharp} \gamma$. It is called an \emph{equilibrium of finite social cost} if 
\begin{equation}\label{eq:finite_social_cost_condition}
        \int_{\X\times\Y}\Phi(x,y,\nu)\dd\gamma < +\infty.
\end{equation}
\end{definition}

Results guaranteeing the existence of equilibria have been established with fixed point methods in the above-mentioned works~\cite{aumann1964markets,aumann1966existence,mas1984theorem}. 
This approach strongly relies on the continuity of the cost function. In~\cite{blanchet2016optimal}, whenever $\Phi$ has a potential structure, \emph{i.e.}, it can be written as 
\begin{equation}\label{eq:cost_potential_structure}
    \Phi(x,y,\nu) 
    = 
    c(x,y) 
    + 
    \frac{\delta \mathcal{E}}{\delta \nu}[\nu](y),
\end{equation}
the sum of an individual continuous cost $c(x,y)$ and the first variation of a functional $\mathcal{E}$ (see Definition~\ref{def.first_var_criticalpoint}), Blanchet and Carlier showed that, if
\begin{equation}\label{eq:variational_characterization_BlanchetCarlier}
    \nu \in \argmin_{\nu' \in \mathscr{P}(\Y)} 
    \mathcal{W}_c(\mu,\nu') + \mathcal{E}(\nu')
\end{equation}
and $\gamma \in \Pi(\mu,\nu)$ is an optimal transportation plan for the cost $c$, then $\gamma$ is a Cournot--Nash equilibrium in the sense of Definition~\ref{def:equ_CournotNash}. As their proof of existence is of variational nature, it provides a natural approach to compute equilibria numerically, as done in~\cite{blanchet2014remarks,blanchet2016optimal} (see also~\cite{blanchet2018computation} for an approach using entropic regularization of the optimal transportation term $\mathcal{W}_c(\mu,\nu')$). 


\subsection{Contributions of this work}

Our main goal is to answer the following question:
\begin{block}
    \em
    Given a sample of players following a continuous distribution, when will a sequence of Nash equilibria for the associated finite game converge to a Cournot--Nash equilibrium?
\end{block}
More precisely, we fix a reference probability space $(\Omega, \mathcal{F}, \mathbb{P})$ and a sequence ${\left(X_i\right)}_{i \in \mathbb{N}}$ of independent and identically distributed (i.i.d.)\ $\mathcal X$-valued random variables, adap\-ted to this reference space and having common law $\mu \in \mathscr{P}(\X)$. This sequence represents a sample of players, where the first $N$ elements describe the type of the agents in our $N$-player game. 

Our sequence of $N$-player games will then resemble~\eqref{eq.toy_game}, where the points ${\left(x_i\right)}_{i = 1}^N$ represent a realization of this sample of players. We will answer this question for two types of \emph{information structure}, in \textit{closed loop} and in \textit{open loop information structure}. The former corresponds to the situation where players choose their strategies having knowledge of the realization of the sample. In the latter, the decision is taken without the knowledge of this realization, so that strategy profiles assume the form of an optimal execution strategy with respect to the event $\omega$ of each realization, see Sections~\ref{sec:conv_closed_loop} and~\ref{sec:conv_open_loop} for more details on each formulation. 

To answer this question, we first go back the the variational structure from~\cite{blanchet2016optimal} discussed above. We give a complete characterization of Cournot–Nash equilibria in terms of the stationarity of a potential functional (Theorem~\ref{thm:potential_structure_cournot_nash}). As a result, we extend the framework of Blanchet–Carlier, which was formulated in terms of minimality and relied heavily on regularity of the cost functions, whereas our arguments require only minimal assumptions. We assume that $c$ and $\mathcal{E}$, as in~\eqref{eq:cost_potential_structure}, are lower semi-continuous and $\mathcal{E}$ admits a first variation which has compact level sets. Instead of working with the energy $\nu \mapsto \mathcal{W}_c(\mu,\nu) + \mathcal{E}(\nu)$, we use a lifted energy over the space of transportation plans with fixed marginal $\mu$, which is defined as
\begin{equation}\label{eq:J_intro}
    \mathcal{J}(\gamma) 
    \eqdef 
    \begin{dcases}
        \int_{\X\times \Y} c(x, y) \dd \gamma + \mathcal{E}\left(\nu\right),
        & \text{ if } \gamma \in \mathscr{P}_\mu(\X\times \Y),\ {(\pi_{\Y})}_{\sharp} \gamma = \nu,\\ 
        +\infty,
        & \text{ otherwise},
    \end{dcases}
\end{equation}
where 
\begin{equation}
    \mathscr{P}_\mu(\X\times \Y)
    \eqdef 
    \left\{
        \gamma \in \mathscr{P}(\X\times\Y) : 
        {\left(\pi_\X\right)}_\sharp\gamma = \mu
    \right\}. 
\end{equation}
We show that $\gamma$ is a Cournot--Nash equilibrium if, and only if, it is a critical point of the energy $\mathcal{J}$ (see Definition~\ref{def.first_var_criticalpoint}). 

In the sequel, we focus our attention to the case where $\mathcal{E}$ can be decomposed into a mean individual and interaction energies, \emph{i.e.},
\begin{equation}\label{eq:energies_pairwise}
    \mathcal{E}(\nu) 
    = 
    \mathcal{L}(\nu) + \mathcal{H}(\nu,\nu), 
    \text{ where }
    \mathcal{L}(\nu) \eqdef \int_\Y L\dd\nu \text{ and } 
    \mathcal{H}(\nu,\bar\nu) \eqdef \int_{\Y\times\Y} H\dd \nu\otimes\bar\nu.
\end{equation} 
This is the framework for which we shall answer the convergence question, see Hypothesis~\ref{Hypo-atomless}--\ref{Hypo-compact} for the exact technical assumptions. With the supplementary assumption~\ref{Hypo-gluing_operator}, we also establish a Lipschitz stability property of the minimal value of the potential with respect to the distribution of player types (Theorem~\ref{thm:stability_value_function}). 
\[
    \left|
        \inf_{\gamma \in \mathscr{P}_{\mu_0}(\X\times \Y)} \mathcal{J} 
        - 
        \inf_{\gamma \in \mathscr{P}_{\mu_1}(\X\times \Y)} \mathcal{J}
    \right|
    \le C W_1(\mu_0, \mu_1),
\]
where $W_1$ corresponds to the $1$-Wasserstein distance, see the definition in Section~\ref{sec:stability-value-function} below. This ensures robustness of equilibria under perturbations of the population.

With this new characterization of equilibria via stationarity, we come back to the question of convergence of Nash to Cournot--Nash equilibria. We show that both families of $N$-player games, in open and closed loop information structure, admit potential functionals, a functional whose minimization yields Nash equilibria, respectively $\mathcal{J}_{\Omega,N}$ and $\mathcal{J}_{\omega,N}$ defined in~\eqref{eq:potential_function_openloop} and~\eqref{eq:potential_function_closedloop} below. Next we show that both potentials $\Gamma$-converge to the continuum potential $\mathcal{J}$. In the open-loop setting, this convergence is obtained in the narrow topology of random measures, whereas in the closed-loop setting, we achieve $\Gamma$-convergence with full probability (Theorems~\ref{theorem:gamma_conv_openloop} and~\ref{theorem:closed_loop_gamma_conv}). To our knowledge, results combining convergence in full-probability with hard marginal constraints are scarce in the literature, highlighting a major difficulty in our analysis. By the fundamental property of $\Gamma$-convergence, all accumulation points of sequences of minimizers are minimizers of the limiting functional (see Section~\ref{subsec:gamma_convergence}). Since minimizers of $\mathcal{J}_{\Omega,N}$ and $\mathcal{J}_{\omega,N}$ correspond to Nash equilibria, it follows that their accumulation points are Cournot--Nash equilibria in the continuum game. 

We obtain a full answer to our question by combining the $\Gamma$-convergence results with the stronger stationarity characterization introduced earlier. Indeed, taking the limit as $N \to \infty$ of a sequence of equilibria of the $N$-player games, we obtain the stationarity condition of $\mathcal{J}$ for the limit measure, showing that limits of any sequence of Nash equilibria are Cournot--Nash equilibria, under the slightly stronger assumption that $H$ is continuous and bounded. Convergence of minimizers of the potential functions is very natural from a variational perspective, but from a game-theory point-of-view it implies the convergence of very specific profiles of equilibria, where agents collaborate to minimize the potential function, which can be interpreted as a global utility. Combining the variational convergence with the stationarity-based characterization of equilibria yields a much stronger convergence result for finite-player games. 

This result is surprising in the sense that we obtain the convergence of non-variational objects, all Nash equilibria, even the ones that are not obtained via a variational principle, by means of a variational argument. Another advantage of employing this perspective is that, given a Cournot--Nash equilibrium $\gamma$ of the game with a continuum of players, the recovery sequence associated with $\gamma$ obtained from the $\Gamma$-convergence provides a sequence of natural candidates for being $\varepsilon$-Nash equilibria of the corresponding $N$-player games
\color{black}

\subsection{Related work}

In this paragraph we describe previous results from the literature that are similar to ours thematically or in terms of the tools employed. 

\subsubsection*{Convergence of Nash equilibria} 
As mentioned before, the convergence of Nash equilibria to equilibria in a continuous setting is a reoccurring idea in the Game Theory literature. Next we give a non-exhaustive list of advancements in this direction:
\begin{itemize}
    \item \cite{Cardaliaguet2019Master} and~\cite{carlier2017convergence} study the convergence question in the framework of MFGs by means of the Master equation, an equation in the space of probability measures that gives global information on the optimality of the MFG system;
    \item the study of convergence of Nash to Cournot--Nash equilibria was conducted in~\cite{blanchet2014nash}, under the assumption that the sequence of functions defining the problem 
    $\Phi_N \colon \X \times \Y \times \mathscr{P}(\Y) \to \mathbb R$, as in Definition~\ref{def:equ_CournotNash}, are uniformy Lipschitz with respect to $(d_\X,d_\Y,\mathcal{W}_1)$ and converge uniformly to $\Phi$. While our problem enjoys the pairwise (and also potential) structure, our regularity assumptions are much weaker and allow the treatment of many examples, \emph{cf.}\ Subsection~\ref{subsection:examples}. 
\end{itemize}

\subsubsection*{\texorpdfstring{$\Gamma$}{Gamma}-convergence particle systems}

Our methods are conceptually close to those of Serfaty in~\cite{serfaty2015coulomb} for the study of limits of particle systems under Coulomb-type interactions. The major difference lies in the the probabilistic nature of our convergence results and the further complexity of the first marginal constraint, natural to our problem to fix the distribution of players $\mu$ (or $\mu_N$ in the $N$-player case).

On the probabilistic side of our $\Gamma$-convergence results,~\cite{garcia2016continuum} deals with the continuum limit of the \text{total variation functional} defined on graphs induced by i.i.d.\ point clouds. The authors prove that a mollified version of a total variation functional, w.r.t.~an i.i.d.\ sample ${(X_i)}_{i \in \mathbb{N}}$ of a continuous law, $\Gamma$-converges with full probability to the weighted total variation functional. Their results are conditional to a good scaling of the mollification parameter, being of the order of the optimal quantization problem, which can be precisely estimated as $N \to \infty$ with full probability. This is the only probabilistic element in their analysis, and afterwards their methods become purely deterministic.

\subsubsection*{Convergence results for averaged optimal control problems}
Related results can also be found in the literature of \textit{stochastic optimization} since the seminal work~\cite{artstein1995consistency}, where convergence of minimizers is obtained via the notion of epi-convergence of a sequence of empirical averages of a functional, commonly written as a expectation. The relation of our methods becomes clear when one notices that the definition of epi-convergence is equivalent to Kuratowski convergence of the epigraph of the sequence of functionals that $\Gamma$-converge, see for instance~\cite{rockafellar1998variational}. These methods have become relevant in the community of \textit{averaged optimal control}, see for instance~\cite{vinter2005minmax,phelps2016optimal} where the consistency of a numerical method based on empirical approximations is proven in this context, and  more recently~\cite{aronna2025average}.

\subsection{Examples}\label{subsection:examples}
In this paragraph we discuss multiple examples that are covered by our model and their relevance in the literature. 

\subsubsection*{Potential Cournot--Nash equilibria \cite{blanchet2016optimal}}
The first clear example is the model proposed by Blan\-chet and Carlier in~\cite{blanchet2016optimal}. As discussed above, they proposed a variational principle to find equilibria as in Definition~\ref{def:equ_CournotNash} and gave plenty of examples of economic applications for this model, such as the holiday choice and technology choice models. The distinctions from ours is that, in~\cite{blanchet2016optimal}, $c$ has to be a continuous cost in order to give the variational characterization of equilibria using the OT problem as in equation~\eqref{eq:variational_characterization_BlanchetCarlier}. We do not need this assumption since we propose the lifting to the space of transportation plans $\gamma \in \mathscr{P}_\mu(\X\times\Y)$. This lift is purely technical and the characterization via the value of the associated OT problem still holds in our case and is useful for numerical purposes, since one can use the dual formulation of the OT problem as a dimensionality-reduction technique. On the other hand, taking $c$ to be lower semi-continuous allows us to make a link with the next class of examples. 

\subsubsection*{Abstract Lagrangian Mean Field Games \cite{Santambrogio2021Cucker}}
Consider a crowd motion, where the starting point of each agent is distributed by a probability measure $\mu$ and the final goal of each agent is to reach a target set while minimizing a cost depending on their own trajectory and on the distribution of trajectories of all agents $Q$. One can think of the target set as the exit of a metro, for instance. In~\cite{Santambrogio2021Cucker}, Santambrogio and Shim propose a model where each agent chooses their trajectory among all possible continuous curves respecting their given initial condition. In this case, $\X = \Omega$ is a compact subset of $\mathbb{R}^d$ and $\Y = C^0([0,T];\Omega)$. Each agent then tries to find a curve $\sigma$ such that $\sigma(0) = x_0$, the given initial condition, while minimizing an energy with end-point penalties and an interaction of Cucker--Smale type giving rise to a consensus of the velocities as in the seminal paper~\cite{Cucker2007Smale}. As the measure $Q$ corresponds to the distribution of trajectories of all agents, the initial condition is then imposed by the constraint $(e_0)_\sharp Q = \mu$. They defined equilibria as measures $Q \in \mathscr{P}(\Y)$ such that $(e_0)_\sharp Q = \mu$ which can be written in Cournot--Nash form, as in Definition~\ref{def:equ_CournotNash}. In~\cite{MazantiVariational}, this model was generalized into an abstract model with pair-wise interaction and prescribed initial condition. Their notion of equilibrium is the same as in~\cite{Santambrogio2021Cucker}, but they show that equilibria are critical points of a potential functional.

This suggests a link with the previous model of Cournot--Nash equilibria and indeed, for $c(x,y) = \chi_{\pi^{-1}(y)}(x)$ we can rewrite the constraints as 
\[
    \pi_\sharp\nu = \mu \iff \mathcal{W}_c(\mu,\nu) < \infty, \text{ since }
    \mathcal{W}_c(\mu,\nu) =
    \begin{dcases}
        0,& \text{ if $\pi_\sharp \nu = \mu$},\\
        +\infty,& \text{ otherwise.}
    \end{dcases}
\]
Conversely, if we propose the lifted energy to the space of transportation plans~\eqref{eq:J_intro}, the variational criterion for Cournot--Nash equilibria from Blanchet and Carlier is of the same form as the one for Lagrangian MFGs.

\subsection{Organization of this paper}
In Section~\ref{sec:preliminaries}, we review the major tools of measure theory and $\Gamma$-convergence that are used throughout the present manuscript. In Section~\ref{sec:potential_CounotNash}, we prove the full characterization of Cournot--Nash equilibria in our potential setting and pass to stability question of the value function of our problem w.r.t.~the distribution of players, which is of independent interest. Finally, in Section~\ref{sec:convergence}, we prove our main $\Gamma$-convergence results for our sequences of open- and closed-loop $N$-player games. In Section~\ref{sec.conclusion}, we give our concluding remarks and possible future directions. 

\section{Preliminaries in measure theory and \texorpdfstring{$\Gamma$}{Gamma}-convergence}\label{sec:preliminaries}
In this section we review the mathematical tools we require and fix the notation used throughout the text. We start with the topologies of spaces of probability measures, before introducing the space of random probability measures and the compactness properties \emph{\`a la} Prokhorov that it enjoys. We finish with a brief discussion about $\Gamma$-convergence, which is the main ingredient of our proofs.

\subsection{Topologies on spaces of Radon measures}
In this paper, we shall work with general Polish spaces $(\X,d_\X)$, that is, a complete separable space equipped with a metric topology. We let $\mathscr{M}_b(\X)$ denote the space of finite Radon measures over $\X$, that is, the space of all Borel measures $\mu$ such that $\mu(K)<\infty$ whenever $K$ is compact. We let $\mathscr{P}(\X)$ denote the subspace of probability measures over $\X$, \emph{i.e.}, positive measures with unitary total mass. 

It follows from Riesz' representation theorem that $\mathscr{M}_b(\X)$ is the topological dual space of $\mathscr{C}_0(\X)$, the continuous functions $\phi$ converging to $0$ at infinity (in the sense that, given $\varepsilon > 0$, there exists a compact subset $K$ of $\X$ such that $\lvert \phi(x) \rvert < \varepsilon$ for all $x \in \X \setminus K$), see~\cite[Chapter~1]{fonseca2007modern}. This defines a norm in $\mathscr{M}_b(\X)$, the total variation norm, and also its weak-$\star$ topology. We can define other notions of weak topology by changing the set of test functions. 

\begin{definition}
    We say a sequence of measures ${\left(\mu_n\right)}_{n \in \mathbb{N}}$ \emph{converges in the narrow topology} to $\mu$ if 
    \[
        \int_\X \phi \dd \mu_n 
        \xrightarrow[n \to \infty]{}
        \int_\X \phi \dd \mu_n
        \quad \text{ for every $\phi \in \mathscr{C}_b(\X)$},
    \]
    where $\mathscr{C}_b(\X)$, is the set of continuous and bounded functions, and we write $\mu_n \xrightharpoonup[n \to \infty]{} \mu$.
\end{definition}

Notice that if $\mu_n$ converges to $\mu$ in the weak-$\star$ topology, there is no guarantee that $\mu(\X) = \lim_{n \to \infty} \mu_n(\X)$, the norm $|\cdot|(\X)$ is only l.s.c.\ for this notion of convergence. For the narrow convergence, however, as it is in duality with $\mathscr{C}_b(\X)$, we can consider the constant $1$ as test function and obtain the convergence of the total masses. This implies that $\mathscr{P}(\X)$ is closed for the narrow topology, but not for the weak-$\star$, unless $\X$ is compact. Besides this, we also have a nice criterion of compactness for the narrow topology.

\begin{theorem}[{Prokhorov's theorem, \cite[Theorem~2.8]{ambrosio2021lectures}}]\label{thm:Prokhorov}
    Let $\mathcal{F} \subset \mathscr{P}(\X)$ be a family of probability measures over $\X$. Then $\mathcal{F}$ is compact for the narrow topology if, and only if, it is a tight family, \emph{i.e.}, for all $\varepsilon>0$, there is a compact set $K$ such that
    \[
        \mu(\X\setminus K) < \varepsilon \quad \text{ for all $\mu \in \mathcal{F}$.}    
    \]
\end{theorem}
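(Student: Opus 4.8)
The plan is to prove the two implications separately, exploiting throughout that, for a Polish space $\X$, the narrow topology on $\mathscr{P}(\X)$ is itself metrizable (for instance by the L\'evy--Prokhorov metric). Consequently compactness coincides with sequential compactness, and it suffices to argue with sequences: for the hard direction I must extract from any sequence in a tight family $\mathcal F$ a narrowly convergent subsequence with limit in $\mathscr{P}(\X)$, while for the converse I must produce, from sequential compactness, the uniform compact sets witnessing tightness.

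For the implication tightness $\Rightarrow$ compactness, I would first dispatch the case of a \emph{compact} metric space $\X$. There $\mathscr{C}(\X)=\mathscr{C}_0(\X)$ is separable, so the Riesz identification $\mathscr{M}_b(\X)=\mathscr{C}(\X)^\ast$ recalled above, combined with the Banach--Alaoglu theorem, makes norm-bounded sets of measures weak-$\star$ sequentially compact; testing against the constant function $1$ shows that a weak-$\star$ limit of probability measures is again a probability measure, so $\mathscr{P}(\X)$ is narrowly compact. For a general Polish $\X$ I would embed it homeomorphically into the Hilbert cube $[0,1]^{\mathbb N}$ and set $\widehat\X$ equal to the (compact) closure of its image. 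Given $(\mu_n)\subset\mathcal F$, I extend each $\mu_n$ to $\widehat\X$, use the compact case to extract a subsequence $\widehat\mu_{n_k}$ converging narrowly on $\widehat\X$ to some $\widehat\mu\in\mathscr P(\widehat\X)$, and then invoke tightness to show $\widehat\mu$ is concentrated on $\X$: for the compacts $K_j\subset\X$ with $\mu(\X\setminus K_j)<1/j$ for all $\mu\in\mathcal F$, each $K_j$ is closed in $\widehat\X$, so the Portmanteau theorem gives $\widehat\mu(K_j)\ge\limsup_k\widehat\mu_{n_k}(K_j)\ge 1-1/j$, whence $\widehat\mu(\X)=1$. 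Restricting $\widehat\mu$ to $\X$ produces the candidate limit, and I would transfer the convergence back to $\X$ through the open-set form of Portmanteau.

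For the converse, compactness $\Rightarrow$ tightness, I would argue by contradiction, and here both separability and completeness of $\X$ enter. Fix a countable dense sequence $(x_i)$ and, for $\varepsilon>0$, the increasing open sets $A_m=\bigcup_{i=1}^m B(x_i,\varepsilon)$, so that $A_m\uparrow\X$. If tightness failed at scale $\varepsilon$ with some slack $\delta>0$, one could select $\mu_m\in\mathcal F$ with $\mu_m(A_m)\le 1-\delta$; extracting a narrow limit $\nu$ and using the open-set Portmanteau inequality together with $A_m\subset A_{m_k}$ for $m_k\ge m$ would give $\nu(A_m)\le\liminf_k\mu_{m_k}(A_m)\le 1-\delta$ for every $m$, hence $\nu(\X)\le 1-\delta$, contradicting $\nu\in\mathscr P(\X)$. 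Therefore for every $k$ there is a finite union $A^k$ of $(1/k)$-balls with $\mu(\X\setminus A^k)<\varepsilon/2^k$ for all $\mu\in\mathcal F$; the set $K=\bigcap_k\overline{A^k}$ is then closed and totally bounded, so compact by completeness, and it satisfies $\mu(\X\setminus K)<\varepsilon$ uniformly in $\mu$.

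I expect the main obstacle to be the non-compact case of the first implication, namely controlling the mass of the limit on the compactification $\widehat\X$. A bounded continuous function on $\X$ need not extend continuously to $\widehat\X$, so narrow convergence on $\widehat\X$ does not descend to $\X$ for free; the remedy is to run the whole descent through open sets of the form $U=V\cap\X$ with $V$ open in $\widehat\X$, using that both $\widehat\mu$ and each $\widehat\mu_{n_k}$ charge only $\X$. Tightness is exactly the hypothesis that forbids mass from leaking to the boundary $\widehat\X\setminus\X$, and pinning down this point is the crux of the argument.
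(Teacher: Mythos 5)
The paper itself offers no proof of this statement: it is imported verbatim from \cite[Theorem~2.8]{ambrosio2021lectures}, so there is no internal argument to compare against. Your proposal is a correct, self-contained rendition of the classical proof, and it matches the standard route of the cited reference: metrizability of the narrow topology reduces compactness to sequential compactness; the compact case follows from the Riesz identification $\mathscr{M}_b(\X) = \mathscr{C}(\X)^\ast$ plus Banach--Alaoglu; the general Polish case goes through the Hilbert-cube compactification $\widehat{\X}$, with tightness and the closed-set portmanteau inequality forcing the limit $\widehat{\mu}$ to charge only $\X$ (note that $\bigcup_j K_j$ is $\sigma$-compact, hence Borel in $\widehat{\X}$, so restricting $\widehat{\mu}$ is legitimate); and you correctly identified and resolved the genuine crux, namely that bounded continuous functions on $\X$ need not extend to $\widehat{\X}$, by descending through open sets $U = V \cap \X$ and the liminf form of portmanteau. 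The converse via finite unions of $\varepsilon$-balls uses exactly where separability (the dense sequence) and completeness (closed plus totally bounded implies compact) enter, and the $\sum_k \varepsilon/2^k$ bookkeeping is right. Two one-line touch-ups: first, in the compact case, testing against the constant $1$ controls the total mass of the weak-$\star$ limit, but you should also record positivity (test against nonnegative $\phi$) before concluding the limit lies in $\mathscr{P}(\X)$; second, your forward direction proves \emph{relative} compactness --- the extracted limit lies in $\mathscr{P}(\X)$ but not necessarily in $\mathcal{F}$ --- which is the standard formulation of Prokhorov's theorem and is what the paper actually uses (e.g., in the tightness arguments of Theorems~\ref{thm:potential_structure_cournot_nash} and~\ref{theorem:nash2cournotnash_openloop}); the literal word ``compact'' in the statement would additionally require $\mathcal{F}$ to be narrowly closed, a gap in the statement rather than in your proof.
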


Actually, the set $\mathcal{K} = \mathscr{C}_b(\X)$ is not the minimal set for which we can define a weak topology that yields the narrow convergence. This is clear since we can always approximate functions in $\mathscr{C}_b(\X)$ with Lipschitz functions, but we can even construct a countable set of test functions yielding the narrow convergence. 
\begin{proposition}[{\cite[Chapter~5]{Ambrosio2008GigliSavare}}]\label{prop:criterion_narrow_conv_countable}
    There exists a countable set $\mathcal{K} = {\left(f_k\right)}_{k \in \mathbb{N}}$ of Lipschitz functions such that, for every sequence of measures ${\left(\mu_n\right)}_{n \in \mathbb{N}}$, ${\left(\mu_n\right)}_{n \in \mathbb{N}}$ converges narrowly to some $\mu$ if, and only if, 
    \[
        \int_\X f_k\dd \mu_n \xrightarrow[n \to \infty]{} \int_\X f_k\dd\mu \quad
        \text{ for all $k \in \mathbb{N}$}.     
    \]
\end{proposition}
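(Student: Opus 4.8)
The plan is to prove the nontrivial implication through the portmanteau theorem: I will reduce narrow convergence to the single one-sided inequality $\liminf_n \mu_n(G) \ge \mu(G)$ for every open set $G$, and then produce the countable family by approximating indicators of open sets from inside by Lipschitz functions built from a countable dense subset of $\X$. The reverse implication requires no work: every function $f_k$ I construct will be bounded and Lipschitz, hence an element of $\mathscr{C}_b(\X)$, so if $\mu_n \xrightharpoonup[n\to\infty]{} \mu$ then $\int_\X f_k \dd\mu_n \to \int_\X f_k \dd\mu$ directly by the definition of narrow convergence.

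To build $\mathcal{K}$, I fix a countable dense set ${\{x_m\}}_{m\in\mathbb{N}} \subset \X$ (which exists since $\X$ is Polish, hence separable) and consider the countable collection of open sets obtained as finite unions $U$ of balls $B(x_m, q)$ with $m \in \mathbb{N}$ and $q \in \mathbb{Q}_{>0}$. For each such $U$ and each $j \in \mathbb{N}$ I set $f_{U,j}(x) \eqdef \min\bigl(1,\, j\,\dist(x, \X \setminus U)\bigr)$. Each $f_{U,j}$ takes values in $[0,1]$, is $j$-Lipschitz, vanishes outside $U$, and increases pointwise to $\mathbf{1}_U$ as $j \to \infty$, so that $\int_\X f_{U,j} \dd\lambda \nearrow \lambda(U)$ for every $\lambda \in \mathscr{P}(\X)$ by monotone convergence. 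I let $\mathcal{K}$ be the countable family consisting of all these $f_{U,j}$ together with the constant function $1$.

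For the substantive direction, I assume $\int_\X f_k \dd\mu_n \to \int_\X f_k \dd\mu$ for all $k$, with $\mu \in \mathscr{P}(\X)$, and I fix an open set $G$ and some $\varepsilon > 0$. Since $\mu$ is a Radon probability measure on a Polish space it is inner regular with respect to compact sets, so there is a compact $K \subset G$ with $\mu(K) \ge \mu(G) - \varepsilon$. Because $\dist(K, \X\setminus G) > 0$, the density of ${\{x_m\}}$ and the rationality of the radii let me cover $K$ by finitely many balls whose closures lie inside $G$; their union $U$ then belongs to my collection and satisfies $K \subset U \subset \overline{U} \subset G$, whence $\mu(U) \ge \mu(G) - \varepsilon$. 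For each fixed $j$ I have $\mu_n(G) \ge \mu_n(U) \ge \int_\X f_{U,j}\dd\mu_n$, so passing to the limit in $n$ gives $\liminf_n \mu_n(G) \ge \int_\X f_{U,j}\dd\mu$; letting $j \to \infty$ yields $\liminf_n \mu_n(G) \ge \mu(U) \ge \mu(G) - \varepsilon$. As $\varepsilon$ is arbitrary, $\liminf_n \mu_n(G) \ge \mu(G)$ for every open $G$, and since all the measures involved are probability measures this one-sided inequality is exactly the portmanteau criterion for narrow convergence.

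The only delicate points are of a bookkeeping nature: I must keep the family countable while still detecting every open set, which is handled by passing through a compact inner approximation and extracting a finite subcover from the countable basis. The point I expect to matter most is controlling escape of mass on a non-compact $\X$; here I deliberately exploit that every $\mu_n$ and $\mu$ has total mass $1$, since then the one-sided inequality on open sets upgrades to full narrow convergence without any separate tightness argument, which would otherwise be the main obstacle.
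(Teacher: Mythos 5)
Your proof is correct, but note that the paper itself does not prove this proposition at all: it is quoted from~\cite[Chapter~5]{Ambrosio2008GigliSavare}, where the standard argument embeds the Polish space $\X$ homeomorphically into the Hilbert cube via the bounded Lipschitz coordinate maps $x \mapsto \min\left(1, d_\X(x,x_m)\right)$ for a countable dense set ${\{x_m\}}_{m\in\mathbb{N}}$, and obtains the countable test family from a Stone--Weierstrass-type density argument (rational polynomials in these coordinates), which simultaneously yields metrizability of the narrow topology on $\mathscr{P}(\X)$. Your route is genuinely different and more elementary: you build the family by hand from indicator approximations $f_{U,j} = \min\left(1, j \dist(\cdot, \X\setminus U)\right)$ over a countable base of finite unions of rational balls, and reduce narrow convergence to the one-sided portmanteau inequality $\liminf_n \mu_n(G) \ge \mu(G)$ on open sets, correctly exploiting that for probability measures this single inequality (via complementation and the layer-cake formula) upgrades to full narrow convergence with no tightness argument. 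All steps check out, including the inclusion of the constant function $1$ and the covering of a compact inner approximation by basis balls. Two small remarks: the appeal to Ulam's theorem (inner regularity by compact sets) is heavier than needed, since every open $G$ in a separable metric space is a countable union of rational basis balls, so the finite unions $U_m \nearrow G$ give $\mu(U_m) \to \mu(G)$ by continuity from below, letting you skip the compact set $K$ and the condition $\overline{U} \subset G$ entirely; and the edge case $G = \X$ (where $\X \setminus G = \emptyset$ and $\dist(K,\emptyset)$ needs a convention) should be disposed of separately, trivially, via the constant function. What your approach buys is self-containedness at the level of first-course measure theory; what the AGS approach buys is the stronger structural conclusion (a metric for the narrow topology, with uniformly bounded $1$-Lipschitz test functions), which is what makes the countable-family criterion a one-line remark there.
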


Another feature of the narrow topology is its continuity properties. For instance, the integral of a l.s.c.\ function is also l.s.c.,\ which is particularly useful for our analysis considering the energies $\mathcal{L}$ and $\mathcal{H}$. We summarize the results we will require as follows.

\begin{lemma}\label{lemma.lsc_narrow}
    Let $\X$ be a Polish space. The following hold:
\begin{itemize}
    \item \emph{\cite[Lemma~3.5]{Santambrogio2021Cucker}}  If ${\left(\mu_n\right)}_{n\in\mathbb{N}} \subset \mathscr{P}(\X)$ converges to $\mu$ in the narrow topology, then the sequence of product measures ${\left(\mu_n\otimes \mu_n\right)}_{n\in\mathbb{N}}$ converges narrowly to $\mu \otimes \mu$ in $\mathscr{P}(\X)$.
    \item \emph{\cite[Proposition~7.1]{santambrogio2015optimal}} If $F\colon \X \to \mathbb{R}\cup\{+\infty\}$ is l.s.c., then so is 
    \[
        \mathscr{P}(\X) \ni \mu \mapsto \int_\X F \dd \mu,
    \]
    in particular the functional $\nu \mapsto \mathcal{L}(\nu) + \mathcal{H}(\nu,\nu)$, defined in~\eqref{eq:energies_pairwise}, is lower semi-continuous. 
\end{itemize}
\end{lemma}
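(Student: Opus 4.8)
The statement gathers two independent facts, each provable by a standard approximation argument, together with a composition step for the ``in particular'' claim. The plan is to prove the first bullet by combining tightness (via Prokhorov) with convergence on product test functions, and the second by writing a l.s.c.\ function as an increasing limit of bounded Lipschitz functions.

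\emph{First bullet.} Suppose $\mu_n \to \mu$ narrowly. Since a narrowly convergent sequence is narrowly relatively compact, Theorem~\ref{thm:Prokhorov} gives tightness of $\{\mu_n\}$: for each $\varepsilon > 0$ there is a compact $K_\varepsilon \subset \X$ with $\mu_n(\X\setminus K_\varepsilon) < \varepsilon$ for all $n$. Then $\{\mu_n\otimes\mu_n\}$ is tight on $\X\times\X$, because the mass outside $K_\varepsilon\times K_\varepsilon$ is at most $2\varepsilon$ uniformly in $n$, so by Prokhorov it is narrowly relatively compact and it suffices to identify its unique cluster point. For product test functions $f, g \in \mathscr{C}_b(\X)$, Fubini's theorem yields
\[
\int_{\X\times\X} f(x) g(y) \dd(\mu_n\otimes\mu_n)
= \Big(\int_\X f \dd\mu_n\Big)\Big(\int_\X g \dd\mu_n\Big)
\xrightarrow[n\to\infty]{}
\Big(\int_\X f \dd\mu\Big)\Big(\int_\X g \dd\mu\Big)
= \int_{\X\times\X} f(x) g(y) \dd(\mu\otimes\mu).
\]
To pass from such products to an arbitrary $\phi \in \mathscr{C}_b(\X\times\X)$, I would approximate $\phi$ uniformly on the compact set $K_\varepsilon\times K_\varepsilon$ by finite linear combinations of products $f(x)g(y)$ using the Stone--Weierstrass theorem (the algebra they generate separates points and contains the constants), and bound the contribution outside $K_\varepsilon\times K_\varepsilon$ by $2\varepsilon\,\norm{\phi}_\infty$ using tightness. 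Letting first $n\to\infty$ and then $\varepsilon\to 0$ identifies the limit as $\mu\otimes\mu$. This promotion from product functions to general test functions, made global by tightness, is the only delicate point.

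\emph{Second bullet.} Let $F\colon\X\to\mathbb{R}\cup\{+\infty\}$ be l.s.c.; by Hypothesis~\ref{Hypo-LH} the functions to which we apply this (namely $L$ and $H$) are nonnegative, so we may assume $F$ is bounded below. The key fact is that such an $F$ is the pointwise increasing limit of bounded Lipschitz functions, for instance the truncated Moreau--Yosida approximations
\[
F_k \eqdef \min\Big\{\, k, \ \inf_{z\in\X}\big(F(z) + k\, d_\X(\cdot, z)\big)\,\Big\},
\]
which are $k$-Lipschitz, bounded, nondecreasing in $k$, and satisfy $F_k \nearrow F$ pointwise. Each map $\mu\mapsto\int_\X F_k\dd\mu$ is then narrowly continuous since $F_k\in\mathscr{C}_b(\X)$, while monotone convergence gives $\int_\X F\dd\mu = \sup_k \int_\X F_k\dd\mu$. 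A pointwise supremum of continuous functions is l.s.c., which yields the claim.

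\emph{The ``in particular'' claim.} Applying the second bullet with $\X$ replaced by $\Y$ shows that $\mathcal{L}(\nu) = \int_\Y L\dd\nu$ is l.s.c. For the interaction term, the first bullet shows $\nu\mapsto\nu\otimes\nu$ is narrowly continuous, and the second bullet applied to $H$ on $\Y\times\Y$ shows $\eta\mapsto\int_{\Y\times\Y}H\dd\eta$ is l.s.c.; composing an l.s.c.\ function with a continuous map gives that $\nu\mapsto\mathcal{H}(\nu,\nu)$ is l.s.c. Since both terms are nonnegative, their sum $\nu\mapsto\mathcal{L}(\nu)+\mathcal{H}(\nu,\nu)$ from~\eqref{eq:energies_pairwise} is l.s.c., as claimed.
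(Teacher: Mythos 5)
Your proposal is correct in substance, but note that the paper offers no proof of this lemma at all: both bullets are imported from the literature, namely \cite[Lemma~3.5]{Santambrogio2021Cucker} for the product convergence and \cite[Proposition~7.1]{santambrogio2015optimal} for the lower semi-continuity, so any self-contained argument is additional relative to the paper. Your second bullet reproduces exactly the standard proof behind the cited proposition: truncated inf-convolution (Moreau--Yosida) approximants $F_k \nearrow F$, narrow continuity of $\mu \mapsto \int_\X F_k \dd\mu$ for $F_k \in \mathscr{C}_b(\X)$, monotone convergence, and the fact that a supremum of continuous maps is l.s.c. Your remark that $F$ must be bounded below is a genuine point: as literally stated the lemma omits this hypothesis (without it $\int_\X F \dd\mu$ need not even be well defined), and it is indeed restored here by Hypothesis~\ref{Hypo-LH}, since $L$ and $H$ are nonnegative. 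The composition step for $\nu \mapsto \mathcal{H}(\nu,\nu)$ (continuity of $\nu \mapsto \nu\otimes\nu$ from the first bullet, then l.s.c.\ of $\eta \mapsto \int_{\Y\times\Y} H \dd\eta$) is the intended use and is fine; sequential reasoning suffices since the narrow topology on $\mathscr{P}$ of a Polish space is metrizable.

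One step in your first bullet is stated too loosely, though it is repairable. After replacing $\phi$ by a Stone--Weierstrass approximant $\psi = \sum_j f_j \otimes g_j$ accurate on $K_\varepsilon \times K_\varepsilon$, the error outside $K_\varepsilon\times K_\varepsilon$ is bounded by $2\varepsilon\left(\norm{\phi}_\infty + \norm{\psi}_\infty\right)$, not by $2\varepsilon \norm{\phi}_\infty$ as you write: $\psi$ is obtained by extending functions from $C(K_\varepsilon)$ to $\mathscr{C}_b(\X)$, its global supremum is not controlled by $\norm{\phi}_\infty$, and it depends on $\varepsilon$, so letting $n \to \infty$ and then $\varepsilon \to 0$ does not close the estimate as written. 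Two standard repairs: (i) multiply $\psi$ by a cutoff $\chi\otimes\chi$, with $\chi = 1$ on $K_\varepsilon$ and supported in a small neighborhood on which $\lvert\psi\rvert \le \norm{\phi}_\infty + 2\delta$ (possible by continuity of $\psi$ and compactness of $K_\varepsilon$); this preserves the product form since $(\chi\otimes\chi)\sum_j f_j\otimes g_j = \sum_j (\chi f_j)\otimes(\chi g_j)$, and now the tail is controlled uniformly in $\varepsilon$; or (ii) drop Stone--Weierstrass entirely and finish via the cluster-point route you already set up: products $f\otimes g$ with $f,g \in \mathscr{C}_b(\X)$ form a measure-determining class on the Polish product space (a $\pi$-system/monotone-class argument), so the tightness of ${\left(\mu_n\otimes\mu_n\right)}_{n\in\mathbb{N}}$ together with the convergence of $\int f\otimes g \dd (\mu_n\otimes\mu_n)$ identifies every cluster point as $\mu\otimes\mu$, whence the whole sequence converges. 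Route (ii) is the cleaner of the two and is essentially the classical Billingsley-style proof of the cited Lemma~3.5.
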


Now consider a pair of Polish spaces $(\X, d_\X)$ and $(\Y, d_\Y)$ and let $\X \ni x \mapsto \nu^x \in \mathscr{P}(\Y)$ be a measure-valued map. 
\begin{definition}\label{def.measurable_family_measures}
	We say ${\left(\nu^x\right)}_{x\in \X}$ is \emph{measurable} if for any Borel set $B \subset \Y$, the function $x\mapsto \nu^{x}(B)$ is Borel-measurable.
\end{definition}

Now, given $\mu \in \mathscr{P}(\X)$ and a measurable family ${\left(\nu^x\right)}_{x\in \X}$, we can define a new probability measure $\gamma \in \mathscr{P}(\X\times \Y)$ in the product space through duality as
\[
    \int_{\X\times\Y} f(x,y)\dd\gamma(x,y)
    \eqdef 
    \int_\Y\left(\int_\X f(x,y)\dd\nu^x(y)\right)\dd\mu(x),
\]
and we use the notation $\gamma = \mu\otimes\nu^x$. It turns out that all measures $\gamma \in \mathscr{P}(\X\times\Y)$ can be written in this way as a consequence of the \emph{disintegration theorem}, see~\cite[Theorem~1.1.6]{stroock1997multidimensional} for a proof in Polish spaces. 
\begin{theorem}[Disintegration theorem]\label{them:disintegration}
    Let $\X_0$ and $\X_1$ be Polish spaces, and two probability measures $\mu_0 \in \mathscr{P}(\X_0)$ and $\mu_1 \in \mathscr{P}(\X_1)$. If $\pi\colon\X_0\to \X_1$ is a measurable map such that $\pi_\sharp \mu_0 = \mu_1$, then there exists a $\mu_1$-a.e.~uniquely determined Borel family ${\left(\mu_0^{x_1}\right)}_{x_1 \in \X_1} \subset \mathscr{P}(\X_0)$ such that 
    \[
        \mu_0^{x_1}(\X_0\setminus \pi^{-1}(x_1)) = 0 \text{ for $\mu_1$-a.e. $x_1 \in \X_1$},    
    \] 
    and, for every measurable function $f\colon\X_0 \to [0,+\infty]$, it holds that
    \[
        \int_{\X_0}f(x_0)\dd\mu_0(x_0)
        = 
        \int_{\X_1}\left(\int_{\pi^{-1}(x_1)}f(x_0)\dd \mu_0^{x_1}(x_0)\right)\dd\mu_1(x_1).
    \]
    Any such ${\left(\mu_0^{x_1}\right)}_{x_1 \in \X_1}$ is called a disintegration family and we write $\mu_0 = \mu_1 \otimes \mu_0^{x_1}$.
\end{theorem}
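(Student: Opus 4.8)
The plan is to construct the family ${\left(\mu_0^{x_1}\right)}_{x_1\in\X_1}$ as a system of regular conditional probabilities associated with the sub-$\sigma$-algebra $\pi^{-1}(\mathcal B(\X_1))$: first I would build the set functions $A\mapsto\mu_0^{x_1}(A)$ out of Radon--Nikodym derivatives, and then upgrade them to genuine Borel probability measures on $\X_0$ using its Polish structure. Concretely, for every Borel $A\subseteq\X_0$ I introduce the finite measure $\lambda_A$ on $\X_1$ given by $\lambda_A(B)=\mu_0(A\cap\pi^{-1}(B))$. Since $\pi_\sharp\mu_0=\mu_1$, the condition $\mu_1(B)=0$ forces $\mu_0(\pi^{-1}(B))=0$ and hence $\lambda_A(B)=0$, so $\lambda_A\ll\mu_1$; the Radon--Nikodym theorem then yields a density $g_A\eqdef\dd\lambda_A/\dd\mu_1\in L^1(\mu_1)$, which may be chosen with $0\le g_A\le1$ $\mu_1$-a.e.\ because $\lambda_\emptyset=0\le\lambda_A\le\lambda_{\X_0}=\mu_1$. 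The defining relation $\int_B g_A\dd\mu_1=\mu_0(A\cap\pi^{-1}(B))$ holds for all Borel $B$, and I set $\mu_0^{x_1}(A)\eqdef g_A(x_1)$.

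Since $\X_0$ is second countable I would fix a countable algebra $\mathcal A$ generating $\mathcal B(\X_0)$. On $\mathcal A$ the identities $g_{\X_0}=1$, $g_\emptyset=0$ and $g_{A\cup A'}=g_A+g_{A'}$ (for disjoint $A,A'\in\mathcal A$) each hold $\mu_1$-a.e., and because $\mathcal A$ is countable they hold simultaneously off a single $\mu_1$-null set $\mathcal N_0$. The crux, and the step I expect to be the main obstacle, is to promote, for $x_1\notin\mathcal N_0$, the finitely additive $[0,1]$-valued set function $A\mapsto g_A(x_1)$ on $\mathcal A$ into a countably additive Borel probability measure on $\X_0$; this is exactly where the Polish hypothesis is indispensable, as disintegration can fail for non-standard measure spaces. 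To carry it out I would invoke tightness: by Ulam's theorem $\mu_0$ is tight, so there is an increasing sequence of compacts $K_m$ with $\mu_0(\X_0\setminus K_m)\to0$, and since $\int_{\X_1}g_{\X_0\setminus K_m}\dd\mu_1=\mu_0(\X_0\setminus K_m)\to0$, after discarding a further null set I obtain $g_{\X_0\setminus K_m}(x_1)\to0$, which supplies the inner regularity needed to make the Carathéodory extension of $A\mapsto g_A(x_1)$ countably additive. (Alternatively, via the Borel isomorphism theorem one reduces to $\X_0=[0,1]$ and builds conditional distribution functions on the rationals.) This defines $\mu_0^{x_1}\in\mathscr P(\X_0)$ for $\mu_1$-a.e.\ $x_1$, and measurability of $x_1\mapsto\mu_0^{x_1}(B)$, which holds on $\mathcal A$ by construction, extends to all Borel $B$ by a monotone-class argument.

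It then remains to verify the three assertions. The integration formula follows by the usual bootstrap: for $A\in\mathcal A$, taking $B=\X_1$ in the defining relation gives $\int_{\X_0}\chi_A\dd\mu_0=\int_{\X_1}\mu_0^{x_1}(A)\dd\mu_1$; a monotone-class argument extends this to indicators of all Borel sets, linearity to simple functions, and monotone convergence to every measurable $f\colon\X_0\to[0,+\infty]$, the inner integral being concentrated on the fiber $\pi^{-1}(x_1)$ by the next point. For the support condition I would fix a countable base ${\left(B_k\right)}_{k\in\mathbb N}$ of $\X_1$ and apply the defining relation with $A=\X_0\setminus\pi^{-1}(B_k)$ and $B=B_k$: since then $A\cap\pi^{-1}(B_k)=\emptyset$, we get $\int_{B_k}\mu_0^{x_1}(\X_0\setminus\pi^{-1}(B_k))\dd\mu_1=0$, so $\mu_0^{x_1}(\pi^{-1}(B_k))=1$ for $\mu_1$-a.e.\ $x_1\in B_k$; intersecting over the countably many $k$ with $x_1\in B_k$ and using $\{x_1\}=\bigcap_{k:\,x_1\in B_k}B_k$ yields $\mu_0^{x_1}(\pi^{-1}(x_1))=1$ off a null set.

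Finally, uniqueness is immediate from the defining relation: if ${\left(\widetilde\mu_0^{x_1}\right)}_{x_1\in\X_1}$ is another such family, then for each $A\in\mathcal A$ and each Borel $B$ both $\int_B\mu_0^{x_1}(A)\dd\mu_1$ and $\int_B\widetilde\mu_0^{x_1}(A)\dd\mu_1$ equal $\mu_0(A\cap\pi^{-1}(B))$, forcing $\mu_0^{x_1}(A)=\widetilde\mu_0^{x_1}(A)$ for $\mu_1$-a.e.\ $x_1$; taking the union of these null sets over the countable generating algebra $\mathcal A$ shows the two families coincide $\mu_1$-a.e.
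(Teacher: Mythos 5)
The paper does not actually prove this statement: it is quoted as a classical result, with the proof deferred to \cite[Theorem~1.1.6]{stroock1997multidimensional}, so there is no internal argument to compare against. Your proposal is a correct, self-contained rendition of the standard construction of regular conditional probabilities, essentially the same argument as in the cited reference: Radon--Nikodym densities $g_A$ for $\lambda_A(B)=\mu_0(A\cap\pi^{-1}(B))$, simultaneous finite additivity off one null set thanks to a countable generating algebra, promotion to genuine Borel probability measures using the Polish structure, and then the fiber-concentration, integration formula, and a.e.\ uniqueness by countable-base and monotone-class bootstraps; your concentration argument via a countable base $(B_k)$ of $\X_1$ with $\bigcap_{k:\,x_1\in B_k}B_k=\{x_1\}$, and your uniqueness argument over the countable algebra, are exactly right. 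One caveat on the promotion step, which you yourself flag as the main obstacle: tightness of the whole space alone, i.e.\ $g_{\X_0\setminus K_m}(x_1)\to 0$, does not by itself yield countable additivity of the Carath\'eodory extension, since continuity at $\emptyset$ along the algebra requires inner approximation of \emph{each} $A\in\mathcal A$ by a compact class (e.g.\ fix for every $A$ an increasing sequence of closed sets $C_{A,k}\subseteq A$ with $\mu_0(A\setminus C_{A,k})\to 0$, so that $g_{C_{A,k}}\uparrow g_A$ holds $\mu_1$-a.e.\ by monotonicity and $L^1$-convergence, and intersect with the $K_m$ before invoking the compact-class criterion for $\sigma$-additivity); alternatively, the Borel-isomorphism reduction to $[0,1]$ with conditional distribution functions on the rationals, which you mention parenthetically, sidesteps this entirely and is the cleaner route. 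With either completion of that single step, your proof is sound and matches the canonical one.
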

Whenever $\gamma \in \Pi(\mu,\nu)$, we apply the previous theorem with $\X_0 = \X\times\Y$, $\X_1 = \X$ and $\pi = \pi_\X$ to write $\gamma = \mu\otimes\nu^x$. We could also consider the disintegration w.r.t.~the second marginal, in which case we write $\gamma = \mu^y\otimes \nu$. One of the most useful, yet simple, applications of the disintegration theorem is the gluing lemma. 

\begin{lemma}[{\cite[Lemma 5.3.2]{Ambrosio2008GigliSavare}}]\label{lemma:gluing_lemma}
    Let $\X_1,\X_2,\X_3$ be Polish spaces, $\gamma_{1,2} \in \mathscr{P}(\X_1\times\X_2)$, and  $\gamma_{1,3} \in \mathscr{P}(\X_1\times\X_3)$ such that
    \[
        {(\pi_{\X_1})}_\sharp \gamma_{1,2} 
        = 
        {(\pi_{\X_1})}_\sharp \gamma_{1,3} = \mu_1.   
    \]
    Then there exists $\gamma_{1,2,3} \in \mathscr{P}(\X_1\times\X_2\times \X_3)$ such that 
    \[
        {(\pi_{\X_1,\X_2})}_\sharp \gamma_{1,2,3} 
        = \gamma_{1,2} 
        \text{ and } 
        {(\pi_{\X_1,\X_3})}_\sharp \gamma_{1,2,3} = \gamma_{1,3}.
    \]
\end{lemma}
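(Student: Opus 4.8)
The plan is to construct $\gamma_{1,2,3}$ by disintegrating both plans with respect to their common first marginal $\mu_1$ and then gluing the fibers together by taking products. Concretely, I would apply the disintegration theorem (Theorem~\ref{them:disintegration}) with $\pi = \pi_{\X_1}$ to each of $\gamma_{1,2}$ and $\gamma_{1,3}$, obtaining $\mu_1$-a.e.\ uniquely determined Borel families ${(\gamma_{1,2}^{x_1})}_{x_1 \in \X_1}$ and ${(\gamma_{1,3}^{x_1})}_{x_1 \in \X_1}$, which, being concentrated on $\{x_1\}\times\X_2$ and $\{x_1\}\times\X_3$, I identify with probability measures on $\X_2$ and $\X_3$ respectively, so that $\gamma_{1,2} = \mu_1\otimes\gamma_{1,2}^{x_1}$ and $\gamma_{1,3} = \mu_1\otimes\gamma_{1,3}^{x_1}$.

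I would then define the candidate $\gamma_{1,2,3}$ through duality by
\[
\int_{\X_1\times\X_2\times\X_3} f \dd \gamma_{1,2,3} \eqdef \int_{\X_1}\left(\int_{\X_2\times\X_3} f(x_1,x_2,x_3)\dd\big(\gamma_{1,2}^{x_1}\otimes\gamma_{1,3}^{x_1}\big)(x_2,x_3)\right)\dd\mu_1(x_1)
\]
for bounded Borel $f$. Taking $f\equiv 1$ and using that every fiber is a probability measure shows that $\gamma_{1,2,3}$ has unit mass. The marginal conditions then follow by specialization: for $f(x_1,x_2,x_3)=\varphi(x_1,x_2)$, the inner integration over $\X_3$ against the probability measure $\gamma_{1,3}^{x_1}$ contributes a factor $1$, so the right-hand side collapses to $\int_{\X_1}\int_{\X_2}\varphi\dd\gamma_{1,2}^{x_1}\dd\mu_1 = \int\varphi\dd\gamma_{1,2}$ by the disintegration identity, giving ${(\pi_{\X_1,\X_2})}_\sharp\gamma_{1,2,3}=\gamma_{1,2}$; the identity ${(\pi_{\X_1,\X_3})}_\sharp\gamma_{1,2,3}=\gamma_{1,3}$ follows symmetrically.

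The main obstacle is the measurability needed to make the iterated integral above a genuine Borel measure: I must know that $x_1 \mapsto \int_{\X_2\times\X_3} f \dd(\gamma_{1,2}^{x_1}\otimes\gamma_{1,3}^{x_1})$ is $\mu_1$-measurable, equivalently that $x_1\mapsto\gamma_{1,2}^{x_1}\otimes\gamma_{1,3}^{x_1}$ is a measurable family in the sense of Definition~\ref{def.measurable_family_measures}. I would establish this first on product test functions $f(x_1,x_2,x_3)=g(x_2)h(x_3)$, where the integral factorizes as $\big(\int g\dd\gamma_{1,2}^{x_1}\big)\big(\int h\dd\gamma_{1,3}^{x_1}\big)$, a product of two $\mu_1$-measurable functions by the measurability of the two disintegrations, and then extend to general $f$ by a monotone class argument. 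Once measurability is secured, the positive linear functional displayed above extends uniquely to an element of $\mathscr{P}(\X_1\times\X_2\times\X_3)$ by the usual Carathéodory/Riesz representation, completing the construction.
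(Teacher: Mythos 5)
Your proposal is correct and follows exactly the paper's proof: disintegrate $\gamma_{1,2} = \mu_1\otimes\gamma_{1,2}^{x_1}$ and $\gamma_{1,3} = \mu_1\otimes\gamma_{1,3}^{x_1}$ with respect to the common first marginal and set $\gamma_{1,2,3} \eqdef \int_{\X_1}\gamma_{1,2}^{x_1}\otimes\gamma_{1,3}^{x_1}\dd\mu_1(x_1)$. The only difference is that you spell out the measurability of $x_1 \mapsto \gamma_{1,2}^{x_1}\otimes\gamma_{1,3}^{x_1}$ and the marginal verifications, which the paper leaves implicit.
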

\begin{proof}
    The proof consists on taking the disintegration families $\gamma_{1,2} = \gamma_{1,2}^{x_1}\otimes\mu_1(x_1)$, $\gamma_{1,3} = \gamma_{1,3}^{x_1}\otimes\mu_1(x_1)$ and defining the new measure as 
    \[
        \gamma_{1,2,3}
        \eqdef 
        \int_{\X_1} 
        \gamma_{1,2}^{x_1}\otimes\gamma_{1,3}^{x_1}\dd\mu_1(x_1). \qedhere
    \] 
\end{proof}

\subsection{Random probability measures and their weak topologies}\label{subsec:random_probability}
We will also use in this work the notion of random probability measure. The simplest example of this kind of object is a sequence of empirical measures, that is, given an i.i.d.\ sample of random variables ${\left(X_i\right)}_{i \in \mathbb{N}}$, we define the measures 
\[
    \mub_N \eqdef \frac{1}{N}\sum_{i = 1}^N \delta_{X_i}.   
\]
Clearly, for each realization of the random variables, we obtain a different discrete measure. For a random sample of agents ${\left(X_i\right)}_{i \in \mathbb{N}}$, we will describe a profile of strategies with the measures 
\[
    \gammab_N = 
    \frac{1}{N}\sum_{i = 1}^N \delta_{X_i}\otimes\nu_i^{X_i}, 
\]
where $\nu_i^{X_i} \in \mathscr{P}(\Y)$ represents the strategy, possibly in mixed plays, of player $i$. In general, a random measure is defined as follows. 

\begin{definition}\label{def:random_measure}
    Given a probability space $(\Omega, \mathcal{F}, \mathbb{P})$ and a Polish space $\X$, a \emph{random measure} $\mub$ is a map from $\Omega$ into the space of Radon measures on $\X$,
    \[
        \mub \colon \Omega \ni \omega \mapsto \mub(\omega) \in \mathscr{M}_b(\X),
    \]
    which is measurable for the Borel $\sigma$-algebra defined with respect to the narrow topology, in duality with $\mathscr{C}_b(\X)$. We let $\mathscr{M}_\Omega(\X)$ denote the space of all random measures, and $\mathscr{P}_\Omega(\X)$ is the convex subset of $\mathscr{M}_\Omega(\X)$ consisting of all $\mathscr{P}(\X)$-valued random probability measures. 
\end{definition}

Given $\mub \in \mathscr{P}_\Omega(\X)$, the map 
\[
    \mathscr{C}_b(\X) \ni \phi \mapsto 
    \mathbb{E}\left[
        \int_\X \phi \dd \mub(\omega)
    \right],
\]
is a bounded linear map over $\mathscr{C}_b(\X)$, so from Riesz' representation theorem this defines a non-random measure via duality, the \emph{expectation measure} $\mathbb{E}\mub \in \mathscr{P}(\X)$, as 
\begin{equation}\label{eq:expectation_measure}
    \int_\X \phi \dd \mathbb{E}\mub 
    \eqdef 
    \mathbb{E}\left[
        \int_\X \phi \dd \mub(\omega)
    \right]. 
\end{equation}
In particular, a random measure can be identified with a non-random measure if, and only if, it coincides with its expectation almost surely. 

The Glivenko--Cantelli law of large numbers, also known as the Glivenko--Cantelli theorem~\cite{dudley1969speed}, states that the empirical measures $\mub_N$ converge in the narrow topology to $\mu$ with probability $1$. Hence, in order to give a topology to $\mathscr{P}_\Omega(\X)$, the first naive candidate would be to consider $\mathbb{P}$-a.s.~convergence of the random measures in the narrow topology. However, this topology would not be metrizable, and it also does not enjoy good compactness properties as Prokhorov's theorem~\cite{dudley2002analysis_probability}. For these reasons, we consider the narrow topology in $\mathscr{P}_{\Omega}(\X)$. 
\begin{definition}\label{def:narrow_topology_random}
    We say that $f\colon\Omega \times \X \to \mathbb{R}$ is a \emph{random bounded continuous function}, and we let $C_\Omega(\X)$ denote the class of all such functions, if\footnote{
        In~\cite{Aubin2009Set} the random continuous functions are also called Carath\'eodory integrands.
    }
    \begin{enumerate}
        \item $x \mapsto f(\omega, x) \in \mathscr{C}_b(\X)$ almost surely; 
        \item $\omega \mapsto f(\omega, x)$ is $\mathcal{F}$-measurable for all $x \in \X$;
        \item $\omega \mapsto \norm{f(\omega,\cdot)}_{L^\infty(\X)}$ is integrable with respect to $\mathbb{P}$.
    \end{enumerate}
    The \emph{narrow topology of random measures} is then the weakest topology that makes 
    \[
        \mathscr{P}_\Omega(\X) 
        \ni \mub \mapsto 
        \mathbb{E}_{\mathbb{P}} 
        \left[
            \int_X f(\omega, x) \dd (\mub(\omega))(x) 
        \right]
        \text{ continuous for all $f \in C_{\Omega}(\X)$.}     
    \]
\end{definition}
A standard application of Lebesgue's dominated convergence theorem gives that $\mathbb{P}$-a.s.~narrow convergence is stronger than the narrow topology of $\mathscr{P}_\Omega(\X)$. In addition, the weaker notion of narrow convergence in $\mathscr{P}_\Omega(\X)$ enjoys compactness properties analogous to Prokhorov's theorem~\ref{thm:Prokhorov}, see~\cite[Theorem~4.29]{crauel2002random}.

\begin{theorem}[Random Prokhorov's theorem]\label{thm:random_Prokhorov}
    A family of random measures $\mathcal{F} \subset \mathscr{P}_\Omega(\X)$ is pre-compact for the narrow topology of random measures, if and only if, it is tight: for any $\varepsilon>0$ there is a compact set $K_\varepsilon$ such that 
    \[
        \mathbb{E}\left[
            \mub(\X \setminus K_\varepsilon)
        \right]    
        \le \varepsilon
        \text{ for every $\mub \in \mathcal{F}$}.
    \]
\end{theorem}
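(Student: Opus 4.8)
The plan is to reduce the statement to the classical Prokhorov theorem (Theorem~\ref{thm:Prokhorov}) by identifying each random probability measure with an ordinary probability measure on the product space $\Omega \times \X$. Since in all our applications $\Omega = \X^{\mathbb N}$ is itself Polish, I would work under the assumption that $\Omega$ is Polish, so that $\mathbb P$ is automatically Radon and the disintegration theorem applies (the general case being the one treated in~\cite{crauel2002random}). To each $\mub \in \mathscr P_\Omega(\X)$ I associate the measure $\Theta_{\mub} \in \mathscr P(\Omega \times \X)$ defined, in the spirit of~\eqref{eq:expectation_measure}, by
\[
    \int_{\Omega\times\X} f \dd \Theta_{\mub}
    \eqdef
    \mathbb{E}_{\mathbb P}\!\left[ \int_\X f(\omega, x)\dd(\mub(\omega))(x)\right],
    \qquad f \in \mathscr{C}_b(\Omega\times\X).
\]
Its $\Omega$-marginal is $\mathbb P$, so $\Theta_\mub$ belongs to $\mathcal Q \eqdef \{\Theta \in \mathscr P(\Omega\times\X) : (\pi_\Omega)_\sharp\Theta = \mathbb P\}$, and conversely the disintegration theorem (Theorem~\ref{them:disintegration}), applied with $\pi = \pi_\Omega$, shows that $\mub \mapsto \Theta_\mub$ is a bijection from $\mathscr P_\Omega(\X)$ onto $\mathcal Q$, the inverse being the disintegration family.

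The crucial point, which I expect to be the main obstacle, is to prove that this bijection is a \emph{homeomorphism} when $\mathcal Q$ carries the ordinary narrow topology of $\mathscr P(\Omega\times\X)$. Testing $\Theta_\mub$ against an integrand $f \in C_\Omega(\X)$ as in Definition~\ref{def:narrow_topology_random} is exactly integration of $f$ against $\Theta_\mub$; since every $f \in \mathscr C_b(\Omega \times \X)$ is a Carathéodory integrand, the ordinary narrow topology is \emph{a priori} coarser than the narrow topology of random measures (the latter being the finer ``stable'' topology). The reverse inclusion is where the fixed marginal becomes essential: given $f \in C_\Omega(\X)$ and $\varepsilon>0$, I would invoke the Scorza--Dragoni theorem to produce a compact $C \subset \Omega$ with $\mathbb P(\Omega\setminus C)$ small on which $f$ is jointly continuous, extend it by Tietze to $\widetilde f \in \mathscr C_b(\Omega\times\X)$ keeping the fibrewise bound $\|f(\omega,\cdot)\|_{L^\infty(\X)}$, and estimate
\[
    \left| \int_{\Omega\times\X} f \dd\Theta - \int_{\Omega\times\X} \widetilde f \dd\Theta\right|
    \le 2\int_{\Omega\setminus C} \|f(\omega,\cdot)\|_{L^\infty(\X)}\dd\mathbb P(\omega).
\]
The right-hand side is \emph{uniform over $\mathcal Q$} precisely because every $\Theta \in \mathcal Q$ has the same $\Omega$-marginal $\mathbb P$, and it can be made arbitrarily small by absolute continuity of the integral (condition~(3) of Definition~\ref{def:narrow_topology_random}). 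Hence $\Theta \mapsto \int f \dd\Theta$ is a uniform limit on $\mathcal Q$ of narrowly continuous functionals, thus narrowly continuous, and the two topologies agree on $\mathcal Q$.

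With the homeomorphism in hand, I would run Prokhorov on the product. For the implication tightness $\Rightarrow$ precompactness, I note that $\mathbb E[\mub(\X\setminus K)] = \Theta_\mub\big(\Omega \times (\X\setminus K)\big)$, so the hypothesis controls the $\X$-fibre uniformly in $\mub \in \mathcal F$; combining this with the tightness of the single Radon measure $\mathbb P$ on the Polish space $\Omega$ — choosing compacts $C\subset\Omega$ and $K\subset\X$ with $\mathbb P(\Omega\setminus C) + \sup_{\mub\in\mathcal F}\mathbb E[\mub(\X\setminus K)] < \varepsilon$ and using $C\times K$ — shows that $\{\Theta_\mub : \mub \in \mathcal F\}$ is tight in $\mathscr P(\Omega\times\X)$. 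Theorem~\ref{thm:Prokhorov} then gives narrow precompactness, and since $\mathcal Q$ is narrowly closed (the $\Omega$-marginal map $\Theta \mapsto (\pi_\Omega)_\sharp\Theta$ is narrowly continuous), every accumulation point remains in $\mathcal Q$; transporting back through the homeomorphism yields precompactness of $\mathcal F$ for the narrow topology of random measures.

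For the converse, precompactness $\Rightarrow$ tightness, I would argue that if $\mathcal F$ is precompact then $\{\Theta_\mub\}$ is narrowly precompact in $\mathcal Q$ (again via the homeomorphism), hence tight by Theorem~\ref{thm:Prokhorov}; projecting a compact set $Q \subset \Omega\times\X$ with $\Theta_\mub(Q^c)\le \varepsilon$ onto $K \eqdef \pi_\X(Q)$, which is compact in $\X$ and satisfies $Q \subset \Omega \times K$, gives $\mathbb E[\mub(\X\setminus K)] = \Theta_\mub(\Omega\times(\X\setminus K)) \le \varepsilon$ for all $\mub \in \mathcal F$. This is exactly the claimed tightness, so the two directions together establish the equivalence. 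The only genuinely delicate ingredient throughout is the topology identification of the second paragraph; the remaining steps are the routine transfer of tightness between a fibred family and its fibrewise fibre, made possible by freezing the first marginal at $\mathbb P$.
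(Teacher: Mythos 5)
The paper offers no proof of this statement: it is imported directly from~\cite[Theorem~4.29]{crauel2002random}, where it is established for an \emph{arbitrary} probability space $(\Omega,\mathcal F,\mathbb P)$, with no topology on $\Omega$, by working intrinsically with the narrow topology of random measures. Your reduction to the classical Prokhorov theorem on the product $\Omega\times\X$ is therefore a genuinely different and more elementary route, and it is correct under your standing assumption that $\Omega$ is Polish with $\mathbb P$ Radon: the identification $\mub\mapsto\Theta_\mub$ is the usual Young-measure picture, the coincidence of the stable topology with the relative narrow topology on the fixed-marginal fibre $\mathcal Q$ is a classical fact whose standard proof is precisely your Scorza--Dragoni argument, and both tightness transfers are sound (including your observation that $\mathcal Q$ is narrowly closed, which converts precompactness in $\mathcal Q$ into precompactness in $\mathscr P(\Omega\times\X)$ and back). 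What your approach buys is a self-contained proof from Theorem~\ref{thm:Prokhorov}; what it gives up is generality: the theorem as stated concerns the fixed abstract probability space of Section~\ref{sec:introduction}, so your argument covers the paper's applications only after realizing the i.i.d.\ sample on the canonical Polish space $\X^{\otimes\mathbb N}$, whereas Crauel's proof needs no such realization.

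Two local repairs are needed. First, in the topology identification, Tietze extension preserves the global bound $\sup_{C\times\X}|f|$, not the fibrewise bound $\norm{f(\omega,\cdot)}_{L^\infty(\X)}$, and that global bound may be infinite, since condition~(3) of Definition~\ref{def:narrow_topology_random} only makes $\omega\mapsto\norm{f(\omega,\cdot)}_{L^\infty(\X)}$ integrable. Truncate first: replace $f$ by $f_M\eqdef\max(\min(f,M),-M)$, whose error satisfies
\[
    \sup_{\Theta\in\mathcal Q}\left|\int_{\Omega\times\X}(f-f_M)\dd\Theta\right|
    \le
    \int_\Omega {\left(\norm{f(\omega,\cdot)}_{L^\infty(\X)}-M\right)}_+ \dd\mathbb P(\omega),
\]
which is uniform over $\mathcal Q$ and vanishes as $M\to\infty$; then apply Scorza--Dragoni and Tietze to $f_M$, so that your displayed estimate becomes $2M\,\mathbb P(\Omega\setminus C)$ plus this truncation error. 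Second, $\mub\mapsto\Theta_\mub$ is injective only after identifying random measures that coincide $\mathbb P$-almost surely, consistently with the a.e.\ uniqueness in the disintegration theorem (Theorem~\ref{them:disintegration}); this identification is implicit throughout the paper and harmless here.
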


\subsection{\texorpdfstring{$\Gamma$}{Gamma}-convergence}\label{subsec:gamma_convergence}
The key idea in order to prove the convergence of certain Nash equilibria, associated with potential games, to Cournot--Nash equilibria is to exploit the fact that one can obtain such objects through the minimization of a family of energies indexed by the number of players. The main argument consists in showing that these energies converge in the sense of $\Gamma$-convergence to the potential function that describes Cournot--Nash equilibria. This notion of convergence is defined as follows. 
\begin{definition}\label{def.gamma_convergence}
    Let $(\X,d_\X)$ be a complete metric space. We say that a sequence of functionals $\mathscr{F}_N \colon \X \to \mathbb{R}\cup \{+\infty\}$ \emph{$\Gamma$-converges} to $\mathscr{F}$ if  
    \begin{itemize}
        \item \underline{$\Gamma\text{-}\liminf$:} for every sequence $x_{N} \cvstrong{N \to \infty}{d_\X} x$ in $\X$, it holds that
        \[
            \mathscr{F}(x) \le \liminf_{N \to \infty} \mathscr{F}_N(x_N).    
        \] 
        \item \underline{$\Gamma\text{-}\limsup$:} for every $x \in \X$, there is a sequence $x_{N} \cvstrong{N \to \infty}{d_\X} x$ such that
        \[
            \limsup_{N \to \infty} \mathscr{F}_N(x_N) \le \mathscr{F}(x),           
        \]
        ${\left(x_N\right)}_{N\in\mathbb{N}}$ is called the \emph{recovery sequence} of $x$.
    \end{itemize}
\end{definition} 

The notion of $\Gamma$-convergence was introduced by De Giorgi in order to have good properties concerning the limits of minimizers of variational problems, see for instance~\cite{dal1993introduction}. In this sense, the fundamental property that makes it interesting is the fact that cluster points of minimizers of a sequence of minimizers of $\mathscr{F}_N$, which $\Gamma$-converges to $\mathscr{F}$, are minimizers of $\mathscr{F}$. Indeed, let ${\left(x_N\right)}_{N \in \mathbb{N}}$ be a sequence of minimizers of ${\left(\mathscr{F}_N\right)}_{N \in \mathbb{N}}$ converging to $x$. For an arbitrary $x'\in \X$, let $(x_N')_{N \in \mathbb N}$ be a corresponding recovery sequence. Then it follows that
\begin{equation*}
    \begin{aligned}
        \mathscr{F}(x) & 
        \le \liminf_{N \to \infty} \mathscr{F}_N(x_N) & \quad & \text{by the $\Gamma\text{-}\liminf$ inequality} \\ 
         & 
         \le \liminf_{N \to \infty} \mathscr{F}_N(x'_N) & & \text{by the minimality of $x_N$}\\
         & 
         \le \limsup_{N \to \infty} \mathscr{F}_N(x'_N)
        \le \mathscr{F}(x') & & \text{since $x'_N$ is a recovery sequence of $x'$.}
    \end{aligned}
\end{equation*}
As $x'$ was an arbitrary point of $\X$, it follows that $x$ is a minimizer of $\mathscr{F}$.

Equivalently, given a sequence of functionals $\mathscr{F}_N$, we define the lower and upper $\Gamma$-limits, respectively, as
    \begin{equation}
        \begin{aligned}
            \Gamma\text{-}\liminf \mathscr{F}_N(x)
            &\eqdef
            \inf 
            \left\{
                \liminf_{N \to \infty} 
                \mathscr{F}_N(x_N) : 
                x_N \cvstrong{N \to \infty}{} x
            \right\},\\  
            \Gamma\text{-}\limsup \mathscr{F}_N(x)
            &\eqdef
            \inf 
            \left\{
                \limsup_{N \to \infty} 
                \mathscr{F}_N(x_N) : 
                x_N \cvstrong{N \to \infty}{} x
            \right\}.
        \end{aligned}
    \end{equation}
    From~\cite[Proposition~1.28]{braides2002gamma}, both $\Gamma$ upper and lower limits are lower semi-continuous and $\mathscr{F}_N \cvstrong{N \to \infty}{\Gamma} \mathscr{F}$ if and only if $\Gamma\text{-}\liminf \mathscr{F}_N = \Gamma\text{-}\limsup \mathscr{F}_N = \mathscr{F}$.

\section{Potential structure and stability of the value function}\label{sec:potential_CounotNash}
In this section, our objective is twofold. First, we extend the results of Blanchet and Carlier about the potential structure for Cournot--Nash equilibria, allowing for individual costs $c$ that are l.s.c.~instead of continuous. We then show a stability result of the value function w.r.t.~the fixed marginal $\mu$.

\subsection{Potential structure for Cournot--Nash equilibria}
The goal of this section is to characterize equilibria in the sense of Definition~\ref{def:equ_CournotNash} as critical points of an energy functional. We assume here that the optimization problem a player of type $x \in \X$ tries to solve among a mean field of plays $\nu \in \mathscr{P}(\Y)$ is given by
\begin{equation}\label{eq:Phi_general}
    \min_{ y \in \Y} \Phi(x, y, \nu) \eqdef c(x,y) + \frac{\delta \mathcal{E}}{\delta \nu}(\nu)(y), 
\end{equation}
where $c$ is l.s.c.~and the second term is the first variation of an energy $\mathcal{E}\colon\mathscr{P}(\Y) \to \mathbb{R}$, which is defined below.

\begin{definition}\label{def.first_var_criticalpoint}
    We say that a functional $\mathscr{F}$ defined over the probability measures $\mathscr{P}(\X)$ over a Polish space $\X$ admits a \emph{first variation} at $\mu_0 \in \text{dom}\mathscr{F}$ if there exists a measurable function $f\colon\X \to \mathbb{R}$ such that, for every $\mu$ in the domain of $\mathscr F$, the function $f$ is $\mu - \mu_0$ integrable and
    \begin{equation}\label{eq:first_var}
        \frac{\dd}{\dd \varepsilon}
        \Big{|}_{\varepsilon=0^+}
        \mathscr{F}(\mu_0 + \varepsilon(\mu-\mu_0))
        = 
        \inner{f, \mu - \mu_0}
        \eqdef 
        \int_{\X} f \dd (\mu - \mu_0),
    \end{equation}
    and we write $f = \displaystyle \frac{\delta \mathscr{F}}{\delta \mu}(\mu_0)$. In addition, we say that $\mu_0$ is a \emph{critical point} of $\mathscr{F}$ if 
    \[
        \inner{\frac{\delta \mathscr{F}}{\delta \mu}(\mu_0), \mu - \mu_0} \ge 0
        \text{ for all $\mu \in \dom\mathscr{F}$}.
    \]
\end{definition}

The notion of first variation defined above is classical and has been used in many works, for instance, in optimal transport and mean field games (see, e.g., \cite[Definition~2.2.1]{Cardaliaguet2019Master}, \cite[Definition~5.43]{Carmona2018ProbabilisticI}, or \cite[Chapter~7, page~200]{santambrogio2015optimal}). It is clear that the first variation of a functional $\mathscr F$ cannot be unique, since the sum of a constant to a function $f$ satisfying~\eqref{eq:first_var} will still satisfy the same relation, as the integration is taken against $\mu - \mu_0$, which integrates to $0$. It is, however, unique up to a constant.

For the rest of this paragraph, we fix $\mu \in \mathscr P(\X)$ and we let $\mathcal{E}$ be an l.s.c.~functional over $\mathscr{P}(\Y)$ admitting a first variation. We consider the energy
\begin{equation}\label{eq:energy_lifted_general}
    \mathcal{J}(\gamma) 
    \eqdef 
    \begin{dcases}
        \int_{\X\times \Y} c(x,y)\dd \gamma + \mathcal{E}(\nu),
        & \text{ if } \gamma \in \Pi(\mu,\nu) \text{ for some } \nu \in \mathscr P(\Y),\\ 
        +\infty,
        & \text{ if } \gamma \not\in \mathscr{P}_\mu(\X\times \Y),
    \end{dcases}
\end{equation}
so that the individual cost $\Phi$ from \eqref{eq:Phi_general} reads
\[
    \Phi(x,y,\nu) = \frac{\delta \mathcal{J}}{\delta \gamma}(\gamma)    
    \text{ for $\nu = {(\pi_\Y)}_\sharp\gamma$.}
\]

Our goal is to show that critical points of this energy are Cournot--Nash equilibria. Notice, however, that satisfying the equilibrium condition~\eqref{eq:CN} is independent of having a finite social cost~\eqref{eq:finite_social_cost_condition}: for instance, from an economic perspective, we might have ``bad equilibria'' representing a society with infinite poverty, for instance if a non-negligible part of the population is infinitely poor. In order to avoid taking into account these situations and ensuring that \eqref{eq:finite_social_cost_condition} may hold true, we make the following definition.

\begin{definition}\label{def.distribution_finite_social_cost}
    A measure $\varrho \in \mathscr{P}(\Y)$ is a \emph{distribution of finite social cost} for the distribution $\mu$ if there is a function $\kappa \in L^1(\mu)$ such that for $\mu$-a.e.~$x\in\X$ there is $y_x \in \Y$ satisfying 
    \[
        \Phi(x,y_x,\varrho) \le \kappa(x).
    \]
\end{definition}

The main result of this section is the following.
\begin{theorem}\label{thm:potential_structure_cournot_nash}
    Assume that $\mathcal{E}$ takes values in $\mathbb R_+ \cup \{+\infty\}$ and admits a l.s.c.~first variation with compact sublevel sets, so that $\Phi$ is defined as in~\eqref{eq:Phi_general}. Let $\gamma \in \mathscr{P}_\mu(\X\times \Y)$, then the following assertions are equivalent: 
    \begin{enumerate}
        \item $\gamma$ is a critical point of the functional $\mathcal{J}$ from~\eqref{eq:energy_lifted_general} and $\nu = {\left(\pi_\Y\right)}_\sharp\gamma$ is a distribution of finite social cost;
        \item $\gamma$ is a Cournot--Nash equilibrium with finite social cost, in the sense of Definition~\ref{def:equ_CournotNash}.
    \end{enumerate}
\end{theorem}
\begin{proof}
    The proof is inspired by the arguments in~\cite[Theorem~4.5.1]{arjmand2022thesis} and \cite[Theorem~3.20]{MazantiVariational} for the case of an abstract Lagrangian Mean Field Game and~\cite[Appendix~A]{liu2023mean}, however we assume much less structure and better exploit the compactness of the level sets to avoid some measure theoretic issues. 

    First suppose that $\gamma \in \mathscr{P}_{\mu}(\X\times\Y)$ is a critical point and define the function
    \[
        \phi(x) \eqdef \inf_{\Y}\Phi(x,\cdot,\nu).  
    \]
    It follows that $\phi$ is Borel-measurable since it is lower semi-continuous as we prove next. Take $x_k \cvstrong{k \to \infty}{}x$ such that $\liminf \phi(x_k)$ is finite, otherwise there is nothing to prove, and assume, up to the extraction of a subsequence, that the $\liminf$ is a limit. Consider $y_k \in \argmin \Phi(x_k,\cdot,\nu)$ so that 
    $(\Phi(x_k, y_k,\nu))_{k \in \mathbb N}$ is uniformly bounded by some constant $\ell$. Therefore, as $c \ge 0$, it holds that 
    \[
        {\left(y_k\right)} _{k \in \mathbb{N}} \subset 
        \left\{
            \frac{\delta \mathcal{E}}{\delta \nu}(y) \le \ell
        \right\},
    \]
    which is a compact set. Up to another extraction, we may assume that $y_k \to y$ for some $y \in \Y$, so that the lower semi-continuity of $\Phi$ gives 
    \[
        \phi(x) \le \Phi(x,y,\nu) \le \liminf_{k \to \infty} \Phi(x_k,y_k,\nu) 
        = \liminf_{k \to \infty} \phi(x_k).    
    \]
    
    To prove that $\gamma$ is a Cournot--Nash equilibrium, it suffices to show that the set
    \[
        A = 
        \left\{
            (x,y)\in\X\times\Y: 
            \substack{
                \displaystyle
                \phi(x) < \Phi(x,y,\nu)
            }
        \right\}
    \]
    is $\gamma$-negligible. Suppose this is not the case. Our goal is to construct a Borel measurable selection of the argmin operator, that is, a Borel function $T\colon\X \to \Y$ such that
    \[
        T(x) \in  
        \argmin_{\Y} \Phi(x,\cdot,\nu)
        \text{ for all $x\in X$}.
    \]
    From~\cite[Theorem~1]{brown1973measurable} it holds that if $E \subset \X \times \Y$ is a Borel set with the property that $E_x \eqdef \{y \in \Y : (x,y) \in E\}$ is $\sigma$-compact for all $x \in \pi_\X(E)$, then there is a Borel measurable selection $T \colon \pi_\X(E) \to \pi_\Y(E)$. And from~\cite[Corollary~1]{brown1973measurable}, the measurable selection of the argmin operator can be obtained since $A$ is a Borel set, as $\phi$ and $\Phi$ are Borel measurable, and the sub-level sets of the first variation of $\mathcal{E}$ are compact, so that 
    \[
        \Y = 
        \bigcup_{n \in \mathbb{N}}
        \left\{
            y : \frac{\delta \mathcal{E}}{\delta \nu}(y) \le n
        \right\},
        \text{ is $\sigma$-compact. }
    \]
    
    We define the transportation plan given by
    \[
        \bar \gamma
        \eqdef 
        \gamma \mres \left(\X\times\Y \setminus A\right) 
        + 
        {(\pi_\X, T\circ \pi_\X)}_\sharp \gamma \mres A.  
    \]
    Recalling that $\Phi$ is precisely the first variation of $\mathcal{J}$ evaluated at $\gamma$, we have
    \begin{align*}
        0 \le 
        \inner{
            \frac{\delta\mathcal{J}}{\delta\gamma}(\gamma), \bar\gamma - \gamma
        }
        &= 
        \int_{\X\times\Y}
        \Phi(\bar x,\bar y,\nu)\dd\bar\gamma
        -
        \int_{\X\times\Y}
        \Phi(x,y,\nu)\dd\gamma\\ 
        &=
        \int_{A}
        \underbrace{
            \left(
            \Phi(x,T(x),\nu) - \Phi(x,y,\nu)
        \right)
        }_{ < 0}
        \dd\gamma \le 0.
    \end{align*}
    This contradicts the fact that $\gamma(A) > 0$, and we conclude that $\gamma$ is a Cournot--Nash equilibrium. 

    To prove that it is an equilibrium of finite social cost, since $\nu$ is a distribution of finite social cost, take a function $\kappa$ as in Definition~\ref{def.distribution_finite_social_cost}. By definition, $\mu$-a.e.~we have that $\phi(x) \le \kappa(x)$ so that
    \[
        \int_{\X\times\Y}\Phi(x,y,\nu)\dd\gamma 
        = 
        \int_\X\phi(x)\dd \mu 
        \le 
        \int_\X\kappa(x)\dd\mu < +\infty,
    \]
    yielding the conclusion.

    Conversely, suppose that $\gamma$ is a Cournot--Nash equilibrium of finite social cost. From Definition~\ref{def:equ_CournotNash}, it follows that 
    \[
        \int_{\X\times \Y} \phi(x) \dd \gamma = \int_{\X\times\Y} \Phi(x,y,\nu)\dd \gamma.        
    \]
    Hence, for any other admissible transportation plan $\bar\gamma \in \mathscr{P}_\mu(\X\times\Y)$, it holds that 
    \begin{align*}
        \int_{\X\times\Y} \Phi(x,y,\nu)\dd\bar\gamma
        &\ge 
        \int_{\X\times\Y} \phi(x)\dd\bar\gamma 
        = 
        \int_\X \phi(x)\dd\mu
        = 
        \int_{\X\times\Y} \Phi(x,y,\nu)\dd\gamma. 
    \end{align*}
    From the fact that $\Phi(x,y,\nu) = \displaystyle \frac{\delta\mathcal{J}}{\delta\gamma}(\gamma)$, we conclude that $\gamma$ is a critical point of $\mathcal{J}$. Using the same measurable selection argument used above, we can construct a measurable map $x\mapsto y_x \in \argmin \Phi(x,\cdot, \nu)$ which satisfies Definition~\ref{def.distribution_finite_social_cost} with $\kappa = \phi$.
\end{proof}

The previous result gives a stronger characterization of Cournot–Nash equilibria in a general setting, as discussed in the introduction. For the rest of this work, and specially for the proof of convergence of Nash to Cournot--Nash equilibria, we shall concentrate on a case where $\mathcal{E}$ is given as the sum of a linear and an interaction term, as in~\cite{blanchet2016optimal,Santambrogio2021Cucker, MazantiVariational} and \cite[Chapter~4]{arjmand2022thesis}. That is, when $\mathcal{E}$ can be written as follows
\begin{equation} 
        \mathcal{E}(\nu) 
        = 
        \mathcal{L}(\nu) + \mathcal{H}(\nu,\nu), 
        \text{ where }
        \mathcal{L}(\nu) = \int_\Y L\dd\nu \text{ and } 
        \mathcal{H}(\nu,\nu) = \int_{\Y\times\Y} H\dd \nu\otimes\nu.
\end{equation} 
In this case, the lifted energy from~\eqref{eq:energy_lifted_general} becomes
\begin{equation}\label{eq:energy_lifted}
    \mathcal{J}(\gamma) 
    \eqdef
    \int_{\X\times\Y}c\dd\gamma 
    + 
    \mathcal{L}(\nu)
    +
    \mathcal{H}(\nu,\nu).
\end{equation}
As the integral of l.s.c.~functionals, both $\mathcal{L}$ and $\mathcal{H}$ are l.s.c.~as functionals over $\mathscr{P}(\Y)$ thanks to Lemma~\ref{lemma.lsc_narrow}. Since the level sets of $\mathcal{L}$ are compact, we would be able to prove existence of minimizers for $\mathcal{J}$, were it not for the term $\mathcal{H}$ that can be $+\infty$, for instance if $H$ diverges in the diagonal.

The game with a continuum of players that we are interested in is described by the cost 
\begin{equation}\label{eq.case_paper}
    \Phi \colon \left\{
    \begin{aligned}
        \X\times\Y\times\mathscr{P}(\Y) & \to \mathbb{R}_+ \cup \{+\infty\} \\ 
        (x,y,\nu) & \mapsto c(x,y) + L(y) + 2\int_\Y H(y,y')\dd \nu(y'),
    \end{aligned}\right.
\end{equation}
whenever $H$ is symmetric. We make the following assumptions on these functionals throughout the rest of this work. 
\begin{hypothesis}
    \item\label{Hypo-atomless} $\mu \in \mathscr{P}(\X)$ is atomless.
    \item\label{Hypo-LH} $c\colon \X \times \Y \to \mathbb{R}_+\cup\{+\infty\}$, $L\colon \Y \to {\mathbb R}_+\cup\{+\infty\}$, and $H\colon \Y \times \Y \to {\mathbb R}_+\cup\{+\infty\}$ are lower semi-continuous.
    \item\label{Hypo-H-symmetric} $H$ is symmetric, \emph{i.e.}, $H(y, \widetilde y) = H(\widetilde y, y)$ for every $(y, \widetilde y) \in \Y \times \Y$. 
    \item\label{Hypo-compact} $L$ has compact sub-level sets, \emph{i.e.}, for every $\kappa > 0$, the set $\{L\leq \kappa\}$ is compact.
\end{hypothesis}

Let us make some remarks on the above hypotheses.
\begin{remark}\label{remark.atomless_non_restrictive}
    Hypothesis~\ref{Hypo-atomless} is not restrictive, since if $\mu$ has atoms, we can work in the lifted space 
    $
        \X' = [0,1] \times \X$ and consider $\mu' = \mathcal{L}^1\mres[0,1]\otimes \mu.
    $
    Then we can formulate a new game with $c$ replaced by $c'(x',y) = c(\pi_\X(x'),y)$, which remains l.s.c.~in the product space $\X'\times\Y$. This new game will then satisfy all hypotheses~\ref{Hypo-atomless}--\ref{Hypo-compact}.
\end{remark}
\begin{remark}
    It is not restrictive either to consider $L$ and $H$ as only functions of the strategies since we can replace $\Y$ by $\widetilde{\Y} = \X \times \Y$, and $c(x, y)$ with $\widetilde c(x, (x', y)) = c(x,y)$ if $x' = x$ and $+\infty$ otherwise.
\end{remark}

In this particular case, besides the previous characterization, we have the following result which is slightly stronger since the first variation does not need to be well-defined for all $\nu$.
\begin{theorem}\label{thm:minimizer_is_equilibria}
    Consider the case that $\mathcal{E} = \mathcal{L} + \mathcal{H}$ as in~\eqref{eq:energies_pairwise} and assume  that
        \[
            C \eqdef 
            \inf_{\mathscr{P}_\mu(\X\times \Y)} \mathcal{J}
            < +\infty.
        \]
        Then there exists a minimizer $\bar \gamma$ for
        $\displaystyle
            \mathcal{J}.  
        $
        
        In addition, if for all $n \in \mathbb{N}$ there exists $0 < C_n < +\infty$ and a measurable family ${\left(\nu_n^y\right)}_{y\in \Y} \subset \mathscr{P}(\Y)$ such that for all $y,y' \in \Y$ and $(x,y,\nu) \in \dom \Phi$ it holds that
        \begin{equation}\label{Hypo-technical}
            \mathcal{H}(\nu_n^y, \nu_n^{y'}) 
            \le C_n(1 + L(y) + L(y')), 
            \text{ and }
            \int_\Y \Phi(x,z,\nu) \dd \nu_n^y(z)
            \cvstrong{n \to \infty}{} \Phi(x,y,\nu),
        \end{equation}
        then any minimizer of $\mathcal{J}$ is a Cournot--Nash equilibrium.
\end{theorem}
\begin{proof}
    Notice first that the existence of a minimizer follows easily from the direct method of the calculus of variations. Indeed, since $L$ has compact sub-level sets, any minimizing sequence is tight, so that we can extract a convergent sub-sequence by means of Prokhorov's theorem, whose limit is a minimizer thanks to lower semi-continuity. Fix a minimizer $\bar \gamma$ and its second marginal $\bar \nu \eqdef {(\pi_\Y)}_\sharp\bar \gamma$. If $\mathcal{J}$ admits a first variation w.r.t.~$\bar \gamma$, the result follows with Theorem~\ref{thm:potential_structure_cournot_nash} since minimizers are critical points provided that the first variation of $\mathcal E$ exists at them. However, the limit defining the first variation only makes sense in this case along directions $\gamma \in \text{dom}\mathcal{J}$ whose second marginal $\nu$ is such that both $\mathcal{H}(\nu, \nu)$ and $\mathcal{H}(\nu, \bar\nu)$ are finite. Indeed, in this case we have 
    \begin{multline*}
        \mathcal{J}(\bar \gamma + \varepsilon(\gamma - \bar \gamma)) 
        = \mathcal{J}(\bar \gamma) +
        \varepsilon \int_{\X\times \Y} (c(x,y) + L(y)) \dd (\gamma - \bar \gamma)
        \\ + 2 \varepsilon \int_{\Y \times \Y} H \dd\bar \nu \otimes (\nu - \bar \nu) 
        + \varepsilon^2 \left(\mathcal{H}(\bar \nu, \bar \nu) + \mathcal{H}(\nu,\nu) + 2\mathcal{H}(\nu, \bar \nu)\right).
    \end{multline*}
    As a result, recalling that in this case we have $\Phi(x,y,\nu) = \displaystyle c(x,y) + L(y) + 2\int_\Y H(y,y')\dd \nu(y')$, for any minimizer $\bar \gamma$, we have 
    \begin{equation}\label{eq.variation_eq}
        \int_{\X\times \Y} \Phi(x,y,\bar \nu) \dd (\gamma - \bar \gamma) \ge 0.
    \end{equation}
    
    Next, define the function
    \[
        \phi(x) 
        \eqdef 
        \inf_{y \in \Y} 
        c(x,y) + L(y) + 2\int_\Y H(y,\bar y) \dd \bar\nu(\bar y).
    \]
    Since $c,L \ge 0$, we have that 
    \[
        \int_\X \phi(x) \dd \mu(x)
        \le 2\mathcal{J}(\bar\gamma) \le 2C,
    \]
    so that $\phi(x)$ is finite for $\mu$-a.e.~$x\in \X$. For any such $x$, we can once again apply the direct method and the arguments from Theorem~\ref{thm:potential_structure_cournot_nash} to obtain a measurable selection $x \mapsto y_x$ of the argmin operator defining $\phi(x)$. 
    In addition, for each $y_x$, we consider the sequence ${\left(\nu_n^{y_x}\right)}_{n \in \mathbb{N}}$ satisfying~\eqref{Hypo-technical} for $\mu$-a.e. $x \in \X$. As a result, defining the measure 
    \[
        \nu_n(A) \eqdef 
        \int_\X \nu_n^{y_x}(A) \dd \mu(x),
        \text{ for any Borel set } A \subset \Y,
    \]
    it holds that 
    \begin{align*}
        \mathcal{H}(\nu_n, \nu_n) 
        &= 
        \int_{\X\times \X} 
        \mathcal{H}\left(
            \nu_n^{y_x}, \nu_n^{y_{x'}}
        \right)
        \dd \mu\otimes \mu(x,x')\\
        &\le 
        C_n\int_{\X \times \X}
        \left(
            1 + L(y_x) + L(y_{x'})
        \right)\dd \mu\otimes\mu\\ 
        &\le 
        C_n
        \left(
            1 + 2\int_\X \phi(x)\dd \mu(x)
        \right)
        < +\infty. 
    \end{align*}

    Hence we can define a new transportation plan to serve as competitor, given by 
    \[
        \gamma_n \eqdef 
        \frac{1}{2}\mu\otimes\nu_n^{y_x} 
        + 
        \frac{1}{2}\bar \gamma,
    \]
   which can be used in~\eqref{eq.variation_eq}. Introducing a disintegration for $\bar \gamma = \mu\otimes \bar \nu^x$, we obtain from~\eqref{eq.variation_eq} that
    \[
        \int_\X 
        \left[ 
            \int_\Y \Phi(x,y,\bar \nu) \dd \bar\nu^x(y)
            - 
            \int_\Y \Phi(x,y,\bar \nu) \dd \nu_n^{y_x}(y)
        \right] \dd \mu(x)
        \le 0.
    \]
    Since the second term in the integral over $\X$ converges $\mu$-a.e.~to $\Phi(x,y_x,\bar \nu) = \phi(x)$, we obtain from the Dominated Convergence Theorem that
    \[
        \int_\X 
        \underbrace{\left[ 
            \int_\Y \Phi(x,y,\bar \nu) \dd \bar\nu^x(y)
            - 
            \phi(x)
        \right]}_{\ge 0} 
        \dd \mu(x)
        \le 0.
    \]
    We conclude that for $\bar \gamma$-a.e.~$(x,y)$ it holds that
    \[
        \Phi(x,y,\bar\nu) = \phi(x),
    \]
    so that $\bar \gamma$ is a Cournot--Nash equilibrium.
\end{proof}

In the previous theorem, the condition that the infimum is finite is non-trivial and should be verified for each problem. Imposing further conditions on $\mathcal{E}$, such as strict convexity, this can be verified as done in~\cite{blanchet2016optimal}.

\subsection{On the finiteness of the infimum}
In the case of a potential of the form \eqref{eq.case_paper}, we can characterize the cases where there the infimum is finite, and hence when we have existence, with a measure defined with the individual transportation cost $c$ and the interaction energy $H$ as follows: For $K \subset \Y$ compact, define 
\begin{equation}
\label{eq:def-capp}
    \capp_{c,H}(K) 
    \eqdef 
    {\left(
        \inf_{\varrho \in \mathscr{P}_{c,\mu}(K)} 
        \int_{\mathcal{Y}\times \mathcal{Y}} H\dd \varrho\otimes\varrho
    \right)}^{-1},
\end{equation}
where $\mathscr{P}_{c,\mu}(K) \eqdef \displaystyle \left\{ \varrho \in \mathscr{P}(K) : \mathcal{W}_c(\mu,\varrho) < +\infty \right\}$. The capacity of an open set $U \subset \Y$ can then be defined through outer regularity 
\[
    \capp_{c,H}(U)
    \eqdef 
    \sup
    \left\{
        \capp_{c,H}(K) : 
        K \subset U
    \right\},
\]
and for a general set $A$ as the inf of the same quantity among all the open sets $U$ containing $A$. This defines a monotone set function that can be used to characterize when the infimum of $\mathcal{J}$ is finite. 
\begin{lemma}\label{lemma.condition_inf_finite}
    Under hypotheses~\ref{Hypo-LH}--\ref{Hypo-compact}, it holds that
    \[
        \inf \mathcal{J} < +\infty 
        \iff 
        \capp_{c,H}(\{L < +\infty\}) > 0.
    \]
\end{lemma}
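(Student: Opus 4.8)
The plan is to decouple the minimization in \eqref{eq:energy_lifted} over the transport part and the second marginal, and then to match the admissible second marginals with the measures probed by $\capp_{c,H}$. Since for a fixed $\nu \in \mathscr P(\Y)$ one has $\inf_{\gamma \in \Pi(\mu,\nu)} \int_{\X\times\Y} c \, \dd\gamma = \mathcal W_c(\mu,\nu)$, minimizing $\mathcal J$ first over the plans compatible with a given $\nu$ yields
\[
    \inf \mathcal{J} = \inf_{\nu \in \mathscr{P}(\Y)} \Big( \mathcal{W}_c(\mu,\nu) + \mathcal{L}(\nu) + \mathcal{H}(\nu,\nu) \Big),
\]
so $\inf\mathcal{J} < +\infty$ if and only if there is $\nu \in \mathscr{P}(\Y)$ with $\mathcal{W}_c(\mu,\nu)$, $\mathcal{L}(\nu)$ and $\mathcal{H}(\nu,\nu)$ simultaneously finite. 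The organizing remark is that, by Hypothesis~\ref{Hypo-compact}, the set $\{L < +\infty\} = \bigcup_{n} \{L \le n\}$ is an increasing union of the \emph{compact} sublevel sets $\{L \le n\}$, on each of which $L$ is bounded by $n$; I would therefore aim to reduce the capacity of the $\sigma$-compact set $\{L<+\infty\}$ to the capacities of these compact pieces, which I expect to do through a continuity-from-below property $\capp_{c,H}(\{L<+\infty\}) = \lim_n \capp_{c,H}(\{L \le n\})$, established as a Choquet-type statement for this monotone set function.

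For the implication $\Leftarrow$, positivity of $\capp_{c,H}(\{L<+\infty\})$ together with the continuity from below gives some $n$ with $\capp_{c,H}(\{L\le n\}) > 0$, i.e.\ $\inf_{\varrho \in \mathscr{P}_{c,\mu}(\{L\le n\})} \mathcal{H}(\varrho,\varrho) < +\infty$. I would then pick $\varrho$ supported in $\{L\le n\}$ with $\mathcal{W}_c(\mu,\varrho) < +\infty$ (by membership in $\mathscr{P}_{c,\mu}(\{L\le n\})$) and $\mathcal{H}(\varrho,\varrho) < +\infty$; since $L \le n$ on the support, $\mathcal{L}(\varrho) \le n$, and the decoupled formula gives $\inf\mathcal{J} \le \mathcal{W}_c(\mu,\varrho) + n + \mathcal{H}(\varrho,\varrho) < +\infty$. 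This direction is clean precisely because the definition of $\mathscr{P}_{c,\mu}(K)$ hands us a competitor with finite transport cost \emph{already built in}, so no re-routing of mass is needed.

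For the converse $\Rightarrow$, I would start from $\nu$ with the three quantities finite, note that $\mathcal{L}(\nu)<+\infty$ forces $\nu(\{L<+\infty\}) = 1$, and try to feed $\nu$ (suitably truncated) into the capacity to bound the interaction energy from above and conclude $\capp_{c,H}(\{L<+\infty\}) > 0$. Concretely I would restrict $\nu$ to the compact sets $\{L \le n\}$ and renormalize, using $H \ge 0$ to control $\mathcal{H}(\nu\mres\{L\le n\}, \nu\mres\{L\le n\}) \le \mathcal{H}(\nu,\nu)$ uniformly, and Prokhorov tightness of the finite-energy $\nu$ to exhaust its mass. The \textbf{main obstacle} is exactly the transport-finiteness constraint in $\mathscr{P}_{c,\mu}(\{L\le n\})$: truncating the target to $\{L\le n\}$ orphans the portion of $\mu$ that an optimal plan routed toward $\{L > n\}$, and because $c$ may equal $+\infty$ this mass is not automatically re-routable at finite cost, so the renormalized truncations need not lie in $\mathscr{P}_{c,\mu}(\{L\le n\})$. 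I would address this by disintegrating an almost-optimal plan $\gamma = \mu \otimes \nu^x$ along $\mu$, where $x \mapsto \int_\Y c(x,\cdot)\,\dd\nu^x$ is $\mu$-integrable, and using dominated convergence as $n \to \infty$ to keep the transport cost of the truncated competitors finite; combined with the continuity from below of $\capp_{c,H}$, this passes the bound from the compact pieces $\{L\le n\}$ back to $\{L<+\infty\}$ and yields positivity of the capacity.
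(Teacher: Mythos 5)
Your decoupled formula $\inf \mathcal{J} = \inf_{\nu}\bigl(\mathcal{W}_c(\mu,\nu)+\mathcal{L}(\nu)+\mathcal{H}(\nu,\nu)\bigr)$ and your proof of the implication $\Leftarrow$ coincide with the paper's argument: positivity of the capacity yields some $n$ with $\capp_{c,H}(\{L\le n\})>0$, hence a $\varrho \in \mathscr{P}_{c,\mu}(\{L\le n\})$ with $\mathcal{H}(\varrho,\varrho)<+\infty$ and $\mathcal{L}(\varrho)\le n$, and an optimal plan between $\mu$ and $\varrho$ is a finite-energy competitor. That half is fine.

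The forward direction is where you diverge from the paper, and your proposed repair does not close the gap you yourself flagged. The paper never truncates: from $\mathcal{J}(\gamma)<+\infty$ it takes $\nu = (\pi_\Y)_\sharp \gamma$, observes $\mathcal{W}_c(\mu,\nu)<+\infty$ and that $\nu$ is concentrated on $\{L<+\infty\}$, and uses $\nu$ \emph{itself} as a competitor for the capacity of the whole set $\{L<+\infty\}$, concluding $\capp_{c,H}(\{L<+\infty\}) \ge \mathcal{J}(\gamma)^{-1}>0$ in one line. Your route through the compact sublevels, by contrast, cannot be completed by disintegration plus dominated convergence: for a \emph{fixed} $n$, the fiber masses $\nu^x(\{L\le n\})$ may vanish, or be arbitrarily small, on a set of $x$ of positive $\mu$-measure, so the fiberwise renormalized truncation is either undefined or of infinite transport cost there, and letting $n\to\infty$ never produces a competitor supported in one fixed compact $\{L\le n\}$. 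The failure is structural, not technical: take $\X=\Y=(0,1]$, $\mu$ Lebesgue, $c(x,y)=0$ if $y\le x$ and $+\infty$ otherwise, $L(y)=y^{-1/2}$, $H\equiv 0$. These satisfy \ref{Hypo-LH}--\ref{Hypo-compact}, and the identity plan gives $\inf\mathcal{J}\le 2$; yet $\{L\le n\}=[n^{-2},1]$ and the $\mu$-mass on $(0,n^{-2})$ has no finite-cost destination in it, so $\mathscr{P}_{c,\mu}(\{L\le n\})=\emptyset$ and $\capp_{c,H}(\{L\le n\})=0$ for every $n$. The same example shows that your bridging identity $\capp_{c,H}(\{L<+\infty\})=\lim_n \capp_{c,H}(\{L\le n\})$ is false in the generality you need (with the reading of the left-hand side that your forward direction requires, admitting measures merely concentrated on the set, it is $+\infty$ here while the right-hand side is $0$), so no Choquet-type continuity-from-below lemma can rescue the architecture: the transport constraint defining $\mathscr{P}_{c,\mu}(\cdot)$ is nonlocal and does not interact well with exhaustion by compacts. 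The implication $\Rightarrow$ must therefore be argued on $\{L<+\infty\}$ directly, as in the paper, rather than through its compact pieces.
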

\begin{proof}
    Starting with the direct implication, suppose that there exists $\gamma \in \mathscr{P}_\mu(\X\times\Y)$ such that $\mathcal{J}(\gamma)<+\infty$. In particular, letting $\nu$ denote the second marginal of $\gamma$, it follows that $\mathcal{W}_c(\mu,\nu) < +\infty$ and $\supp \nu \subset \{L < +\infty\}$. It then follows that
    \[
        \capp_{c,H}(\{L < +\infty\}) \ge {\mathcal{J}(\gamma)}^{-1} > 0.
    \]
    Conversely, if $\capp_{c,H}(\{L<+\infty\}) > 0$, there is some $N \in \mathbb{N}$ such that 
    \[
    \capp_{c,H}(\{L\le N\}) > 0.
    \]
    Hence there is a measure $\varrho$ concentrated over the compact set $\{L \le N\}$ such that $\mathcal{W}_c(\mu,\varrho) < +\infty$ and $\mathcal{H}(\varrho,\varrho) < +\infty$. Taking $\gamma$ as an optimal transportation plan between $\mu$ and $\varrho$ gives that $\mathcal{J}(\gamma) < +\infty$.
\end{proof}

The previous Lemma seems almost tautological, but in some particular cases there are strong results in the literature that characterize exactly which are the sets with positive capacity. In Examples~\ref{example.lagrangianMFG} and~\ref{example.lagrangianMFG} we treat two models whose particular properties allow to verify the capacity criterion from Lemma~\ref{lemma.condition_inf_finite} as well as the additional condition~\eqref{Hypo-technical}.  

\begin{example}\label{example.lagrangianMFG}
    In the Lagrangian mean field game of \cite[Chapter~4]{arjmand2022thesis}, the interaction term is shown to be bounded by the individual cost, that is, there is a constant $C>0$ such that for all $\nu \in \mathscr{P}(\Y)$ it holds that $\mathcal{H}(\nu,\bar\nu) \le C(1 + \mathcal{L}(\nu) + \mathcal{L}(\bar\nu))$, which trivializes the capacity condition since $\mathcal{L}$ is not identically $+\infty$. Condition~\eqref{Hypo-technical} is also easily verified by taking $\nu_n = \delta_y$.
\end{example}
\begin{example}\label{example.frostman_condition}
    Consider now a simpler case where $\X = \Y = \mathbb{R}^d$,
    \[
        c \in \mathscr{C}_b(\mathbb{R}^d \times \mathbb{R}^d), \ 
        L \in \mathscr{C}_b(\mathbb{R}^d)
        \text{ and }
        H(y,\bar y) = |y - \bar y|^{-\alpha} \text{ for some $0 < \alpha < d$.}
    \]
    The condition that $c$ is bounded implies that the set $\mathscr{P}_{c,\mu}(\mathbb{R}^d)$ is equal to $\mathscr{P}(\mathbb{R}^d)$, since the optimal transportation problem $\mathcal{W}_c(\mu,\varrho)$ is finite for any probability measure $\varrho$. In this way, the capacity condition becomes 
    \[
        \capp_\alpha(\{L<+\infty\}) > 0, 
    \]
    where $\capp_\alpha$ denotes the Riesz capacity, defined over $\mathbb{R}^d$ by \eqref{eq:def-capp} taking $H(y,\bar y) = |y - \bar y|^{-\alpha}$ and $c \equiv 0$. In this case, Frostman's Lemma (see~\cite[Chapter~4.3]{falconer2004fractal}, \cite[Appendix~B]{ponce2016elliptic}, or the original thesis of Frostman~\cite{frostman1935potential}) gives a charaterization of sets with strictly positive $\alpha$-capacity in $\mathbb{R}^d$. Indeed, for a general Borel set $A \subset \mathbb{R}^d$, it holds that
    \[
        d_H(A) = \inf\left\{ 
            s \ge 0 : \capp_s(A) = 0
        \right\},
    \]
    where $d_H(A)$ denotes the Hausdorff dimension of the set $A$. 
    
    It follows that, in order to satisfy the capacity condition, it suffices to verify that the set $\{L < +\infty\}$ is of Hausdorff dimension bigger than $\alpha$. Similarly, to verify~\eqref{Hypo-technical}, since $c,L$ are continuous, for fixed $x,y$ and $\nu$ such that $\displaystyle \int_{\mathbb{R}^d} |y - y'|^{-\alpha}\dd \nu(y') < +\infty$, it can be checked that the map 
    \[
       z \mapsto 
       \int_{\mathbb{R}^d} |z - y'|^{-\alpha}\dd \nu(y')
    \]
    is continuous and bounded in a neighborhood of $y$. 

    Using Frostman's Lemma, there is a sequence $\nu_n \in \mathscr{P}\left(\overline{B_{r_n/4}(0)}\right)$ satisfying $\mathcal{H}(\nu_n,\nu_n)<+\infty$, resulting that $\nu_n \cvweak{n \to \infty} \delta_0$. To construct the family ${(\nu_n^y)}_{y \in \Y}$, first we consider the grid 
    \[
        r_n \mathbb{Z}^d, \text{ where } r_n = 2{n}^{-1/\alpha}. 
    \]
    For a given $y \in \mathbb{R}^d$, if $\dist(y,\mathbb{Z}^d)$ is attained by $y_n$, we define 
    \[
        \nu_n^y(A) \eqdef \nu_n(y_n + A).
    \]
    Hence, if $y,y'$ are both projected onto the same $y_n$, we obtain that 
    \[
        \mathcal{H}(\nu_n^{y},\nu_n^{y'})
        = 
        \mathcal{H}(\nu_n,\nu_n)
        <+\infty,
    \]
    otherwise, we have that
    \[
        \mathcal{H}(\nu_n^{y},\nu_n^{y'}) 
        \le n.
    \]
    In addition, since for any $y \in \mathbb{R}^d$ any sequence of projections onto $r_n\mathbb{Z}^d$ converges to $y$, it holds that $\nu_n^y \cvweak{n \to \infty} \delta_y$
    and~\eqref{Hypo-technical} follows by setting $C_n \eqdef \max\{
    n, \mathcal{H}(\nu_n,\nu_n)\}$.
\end{example}

Although it seems hard to verify the additional condition~\eqref{Hypo-technical}, the following example indicates that it is actually sharp
\begin{example}
    Take $c, L = 0$, $\Y = \{0,1\}$ and the pair-wise interaction $H$ given by 
    \[
        H(0,0)= 0, \ H(0,1)= H(1,0)= -1, \ H(1,1)= +\infty.
    \]
    In this case, the potential energy becomes the minimization among measures of the form 
    \[
        \nu = \nu_0 \delta_0 + \nu_1\delta_1, \qquad (\nu_0, \nu_1) \in [0, 1]^2 \text{ with } \nu_0 + \nu_1 = 1,
    \]
    of the energy given by 
    \[
        \mathcal{H}(\nu, \nu) 
        = 
        -2\nu_0\nu_1 + \infty \nu_1^2.
    \]
    This energy becomes $+\infty$ whenever $\nu_1 > 0$, so that the only minimizer must be given by $\nu_\star = \delta_0$. On the other hand, $\nu_\star$ can never be a Nash equilibrium since 
    \[
        0 \notin 
        \argmin_{y \in \{0,1\}} H(0,y) = \{ 1 \}.
    \]
    
    Observe that~\eqref{Hypo-technical} can never be satisfied with this choice of $H$, but replacing $H(1,1) = M$, with $0 < M < +\infty$, it actually holds that $H \in \mathscr{C}_b$ when endowed with the discrete topology and one can verify with a simple computation that the unique minimizer of the potential energy yields a Nash equilibrium. 
\end{example}

\subsection{Stability of the value function}
\label{sec:stability-value-function}
In this paragraph our primary goal is to show, assuming the additional hypothesis~\ref{Hypo-gluing_operator} below, that we have the estimate 
\begin{equation}\label{eq:stability_value_function}
    \left|  
        \inf_{\gamma \in \mathscr{P}_{\mu_0}(\X\times\Y)}
        \mathcal{J}
        -
        \inf_{\gamma \in \mathscr{P}_{\mu_1}(\X\times\Y)}
        \mathcal{J} 
    \right|
    \le 
    C W_1(\mu_0,\mu_1)
    \text{ for some $C>0$,}
\end{equation}
where $\mu_0, \mu_1 \in \mathscr{P}(\X)$ are two distribution of agents and 
\[
    W_1(\mu_0,\mu_1)
    \eqdef 
    \min_{\gamma \in \Pi(\mu_0,\mu_1)} 
    \int_{\X\otimes\X} 
    d_\X(x_0,x_1)\dd \gamma
\]
is the $1$-Wasserstein, or Kantorovitch--Rubinstein, distance, see~\cite{villani2009optimal}.

The stability result~\eqref{eq:stability_value_function} is of independent interest. In particular, it gives some regularity for the value function $\mu \mapsto \inf_{\gamma \in \mathscr{P}_{\mu}(\X\times\Y)} \mathcal{J}$ in the Wasserstein topology and allows to estimate the value with arbitrary approximations of a given reference distribution $\mu$, instead of for only empirical measures covered by our $\Gamma$-convergence results from Section~\ref{sec:convergence}. However, it is important to emphasize that, contrarily to $\Gamma$-convergence, this result says nothing about the convergence of minimizers for the problems referent to a sequence $\mu_n$ converging to $\mu$. Another potential application would be the construction of $\varepsilon$-equilibria, which could be done with a suitable approximation of a given reference measure. However, this cannot be done for instance in an Euclidean space with an empirical measure since we expect that
\[
    W_1(\mu_N,\mu) \approx N^{-1/d} 
\]
if $\mu \in \mathscr{P}(\mathbb{R}^d)$ and $\mu_N$ is a suitable sequence of empirical measures solving the optimal quantization problem~\cite{merigot2021non}. Recalling that the cost minimized by each player behaves as $N \times \mathcal{J}(\gamma_N)$, this estimate does not give information on whether the minimization of $\mathcal{J}$ over $\mathscr{P}_{\mu_N}(\X\times\Y)$ yields $\varepsilon$-Nash equilibria for the $N$-player game. But it might be useful in this direction if one can carefully construct a sequence $\mu_N$ of estimates for $\mu$ with faster convergence rates.

To prove~\eqref{eq:stability_value_function}, we will exploit the \emph{gluing method}, recently introduced in~\cite{liu2023mean}. This method depends on the existence of a  gluing operator as described in the following assumption:
\begin{hypothesis}[start=5]
    \item\label{Hypo-gluing_operator} There exists an operator $\mathcal{G}\colon \X\times \X\times \Y \to \Y$ such that: 
    \begin{itemize} 
        \item $\mathcal{G}$ is consistent: for every $y \in \Y$ and $x \in \X$, it holds that $\mathcal{G}(x,x,y) = y$;
        \item there exists a positive constant $C>0$ satisfying
        \begin{equation}\label{eq:gluing_operator}
            \begin{aligned}
                c\left(x_1,\mathcal{G}(x_1,x_0,y)\right) 
                &\le 
                c(x_0,y) + C d_\X(x_1,x_0),\\
                L\left(\mathcal{G}(x_1,x_0,y)\right)
                &\le 
                L(y) + C d_\X(x_1,x_0),\\  
                H(\mathcal{G}(x_1,x_0,y),\mathcal{G}(\tilde x_1,\tilde x_0,\tilde y))
                &\le 
                H(y,\tilde y) +
                C \left(
                    d_\X(x_1,x_0) + d_\X(\tilde x_1,\tilde x_0)
                \right).
            \end{aligned}
        \end{equation}
        for any pairs $x_0,x_1\in \X$ and $y,\tilde y \in \Y$.
    \end{itemize}
\end{hypothesis}
Essentially, hypothesis~\ref{Hypo-gluing_operator} says that there is an operator that, given some player of type $x_0$ choosing the play $y$, any other player of type $x_1$ can choose a strategy $\mathcal{G}(x_1, x_0, y)$ paying a perturbation, of order $d_\X(x_0,x_1)$, of the cost paid by the first player. With this assumption we can prove the following result.

\begin{lemma}[Gluing method]\label{lemma:gluing_method}
    Let $\mu_0,\mu_1$ be probability measures in $\mathscr{P}(\X)$ and $\gamma_0 \in \mathscr{P}_{\mu_0}(\X\times \Y)$. Under the hypothesis~\ref{Hypo-gluing_operator}, there exists a measure $\gamma_1 \in \mathscr{P}_{\mu_1}(\X\times\Y)$ such that 
    \[
        \mathcal{J}(\gamma_1)
        \le 
        \mathcal{J}(\gamma_0) + 4C W_1(\mu_0,\mu_1),
    \]
    where $W_1$ denotes the Kantorovitch--Rubinstein distance. 
\end{lemma}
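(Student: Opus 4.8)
The plan is to construct $\gamma_1$ explicitly, by transporting the strategies prescribed by $\gamma_0$ through the gluing operator $\mathcal{G}$ along an optimal coupling between $\mu_0$ and $\mu_1$. We may assume $W_1(\mu_0,\mu_1)<+\infty$ and $\mathcal{J}(\gamma_0)<+\infty$, since otherwise the claimed inequality is trivial. First I would fix an optimal plan $\sigma \in \Pi(\mu_0,\mu_1)$ for the Kantorovich--Rubinstein distance, that is $\int_{\X\times\X} d_\X(x_0,x_1)\dd\sigma = W_1(\mu_0,\mu_1)$; such a plan exists by lower semicontinuity and nonnegativity of $d_\X$ together with the tightness of $\Pi(\mu_0,\mu_1)$ and Prokhorov's theorem~\ref{thm:Prokhorov}. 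Both $\gamma_0 \in \mathscr{P}(\X\times\Y)$ and $\sigma \in \mathscr{P}(\X\times\X)$ have first marginal $\mu_0$, so the gluing lemma~\ref{lemma:gluing_lemma} yields a measure $\Lambda \in \mathscr{P}(\X\times\X\times\Y)$, whose points we write $(x_0,x_1,y)$, such that its marginal on the $(x_0,y)$ coordinates equals $\gamma_0$ and its marginal on the $(x_0,x_1)$ coordinates equals $\sigma$.

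Next I would set $\gamma_1 \eqdef \Psi_\sharp \Lambda$, where $\Psi(x_0,x_1,y) \eqdef \left(x_1, \mathcal{G}(x_1,x_0,y)\right)$. Assuming $\mathcal{G}$ is Borel-measurable (as is implicit in~\ref*{Hypo-gluing_operator}), $\Psi$ is measurable and $\gamma_1$ is a well-defined element of $\mathscr{P}(\X\times\Y)$. Since the $\X$-component of $\Psi$ is simply $x_1$, the first marginal of $\gamma_1$ is the $x_1$-marginal of $\sigma$, which is $\mu_1$; hence $\gamma_1 \in \mathscr{P}_{\mu_1}(\X\times\Y)$, as required. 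I would then estimate $\mathcal{J}(\gamma_1)$ term by term, using the representation $\mathcal{J}(\gamma) = \int_{\X\times\Y}(c+L)\dd\gamma + \int_{(\X\times\Y)^2} H(y,\tilde y)\dd\gamma\otimes\gamma$ coming from~\eqref{eq:energy_lifted}. For the linear part, the first two inequalities of~\eqref{eq:gluing_operator} give pointwise on $\Lambda$ that $c(x_1,\mathcal{G}(x_1,x_0,y)) + L(\mathcal{G}(x_1,x_0,y)) \le c(x_0,y) + L(y) + 2C d_\X(x_1,x_0)$; integrating against $\Lambda$ and reading off its marginals bounds this part by $\int_{\X\times\Y}(c+L)\dd\gamma_0 + 2C W_1(\mu_0,\mu_1)$. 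For the interaction part, $\gamma_1\otimes\gamma_1 = (\Psi\times\Psi)_\sharp(\Lambda\otimes\Lambda)$, so the third inequality of~\eqref{eq:gluing_operator} bounds the integrand by $H(y,\tilde y) + C\left(d_\X(x_1,x_0) + d_\X(\tilde x_1,\tilde x_0)\right)$; integrating against $\Lambda\otimes\Lambda$ produces $\mathcal{H}(\nu_0,\nu_0) + 2C W_1(\mu_0,\mu_1)$, where $\nu_0 \eqdef (\pi_\Y)_\sharp\gamma_0$, since the $\Y$-marginal of $\Lambda$ coincides with that of $\gamma_0$ and each distance term integrates to $W_1(\mu_0,\mu_1)$ over its own factor. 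Summing the two estimates gives exactly $\mathcal{J}(\gamma_1) \le \mathcal{J}(\gamma_0) + 4C W_1(\mu_0,\mu_1)$.

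The argument is essentially bookkeeping once the glued measure $\Lambda$ is in place, and all quantities are nonnegative by~\ref*{Hypo-LH}, so the pointwise inequalities may be integrated monotonically with no $\infty-\infty$ cancellation to worry about. The two points I expect to require care are: ensuring $\mathcal{G}$, hence $\Psi$, is Borel so that $\gamma_1 = \Psi_\sharp\Lambda$ is a genuine Borel probability measure with the correct marginals; and, in the interaction estimate, tracking carefully that $d_\X(x_1,x_0)$ and $d_\X(\tilde x_1,\tilde x_0)$ each depend on a single factor of the product $\Lambda\otimes\Lambda$, which is what produces the constant $2C$ (and thus the final $4C = 2C + 2C$) rather than a larger one. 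Everything else reduces to substituting the three bounds of~\eqref{eq:gluing_operator} and invoking the marginal identities provided by the gluing lemma.
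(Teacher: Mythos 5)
Your proof is correct and takes essentially the same route as the paper's: you glue an optimal plan for $W_1(\mu_0,\mu_1)$ to $\gamma_0$ via Lemma~\ref{lemma:gluing_lemma}, define $\gamma_1$ as the push-forward of the glued measure under $(x_0,x_1,y)\mapsto\left(x_1,\mathcal{G}(x_1,x_0,y)\right)$, and integrate the three inequalities of~\eqref{eq:gluing_operator} against the glued measure and its square. Your bookkeeping ($2C$ from the $c+L$ part plus $2C$ from the interaction part) is exactly the paper's $C\left(3d_\X(x_1,x_0)+d_\X(\tilde x_1,\tilde x_0)\right)$ accounting, and your added remarks on the Borel measurability of $\mathcal{G}$ and the absence of $\infty-\infty$ issues are sound refinements rather than a different argument.
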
 
\begin{proof}
    Given measures $\mu_0,\mu_1 \in \mathscr{P}(\X)$ and let $\pi_{1,0} \in \Pi(\mu_1, \mu_0)$ be an optimal transportation plan between $\mu_0$ and $\mu_1$, \emph{i.e.}
    \[
        \int_{\X_1\times \X_0}d_\X(x_1, x_0) \dd \pi_{1,0}(x_1,x_0) = W_1(\mu_1, \mu_0),    
    \]
    where $\X_0$ and $\X_1$ are identical copies of the space $\X$.

   Let $\Gamma \in \mathscr{P}(\X_1 \times \X_0 \times \Y)$ denote the gluing of $\pi_{1,0}$ and $\gamma_0$ in the sense of Lemma~\ref{lemma:gluing_lemma}, so that ${\left(\pi_{\X_0,\Y}\right)}_{\sharp} \Gamma = \gamma_0$ and ${\left(\pi_{\X_1,\X_0}\right)}_{\sharp} \Gamma = \pi_{1,0}$. The measure $\gamma_1$ is then defined as $\gamma_1 := {\left(\pi_{\X_1},\mathcal{G}\right)}_{\sharp} \Gamma$. It follows from these definitions that
   \begin{align*}
        \mathcal{J}(\gamma_0) 
        &= 
        \int_{\X_0\times \Y} \left(c + L + H\right)\dd \Gamma\otimes \Gamma, \\
        \mathcal{J}(\gamma_1) 
        &= 
        \int_{\X_1\times \Y} \left(c + L + H\right)
        \dd 
        \left(
            {\left(\pi_{\X_1},\mathcal{G}\right)}_{\sharp}\Gamma
        \right)
        \otimes 
        \left(
            {\left(\pi_{\X_1},\mathcal{G}\right)}_{\sharp}\Gamma
        \right).
   \end{align*}
   Using the definition of the gluing operator from~\ref{Hypo-gluing_operator}, we get the following estimates
    \begin{align*}
        \mathcal{J}(\gamma_1)
        &=
        \int_{\X_1\times \Y} 
        (
            c(x_1, \mathcal{G}(x_1,x_0,y)) +
            L(\mathcal{G}(x_1,x_0,y)) \\
            & \quad \quad \quad \quad \quad 
            + H(\mathcal{G}(x_1,x_0,y),\mathcal{G}(\bar x_1,\bar x_0, \bar y))
        )
        \dd\Gamma\otimes\Gamma
        \\ 
        &\le 
        \int_{\X_0\times \Y} \left(c + L + H\right)
        \dd\Gamma\otimes\Gamma \\
        & \quad \quad \quad \quad \quad 
        + C \int_{\X_1\times\X_0\times\Y} 
        (3d_\X (x_1,x_0) + d_\X (\tilde x_1,\tilde x_0))\dd \Gamma \otimes \Gamma\\ 
        &=
        \mathcal{J}(\gamma_0) + 4C\int_{\X_1\times\X_0} d_\X (x_1,x_0)\dd \pi_{1,0} \\ 
        &= 
        \mathcal{J}(\gamma_0) + 4C W_1(\mu_0,\mu_1). 
    \end{align*}
    The result follows.
\end{proof}

We are now in position to prove our main result on the stability of the value function.

\begin{theorem}\label{thm:stability_value_function}
    Under the hypothesis~\ref{Hypo-gluing_operator}, the stability inequality~\eqref{eq:stability_value_function} for the value function holds.
\end{theorem}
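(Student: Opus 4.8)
The plan is to derive the stability inequality directly from the gluing method, Lemma~\ref{lemma:gluing_method}, which already carries all the analytic content; the theorem is essentially a soft consequence obtained by applying that lemma symmetrically. Writing $V(\mu) \eqdef \inf_{\gamma \in \mathscr{P}_\mu(\X\times\Y)} \mathcal{J}(\gamma)$ for the value function, the goal reduces to proving $|V(\mu_0) - V(\mu_1)| \le 4C W_1(\mu_0,\mu_1)$, where $C$ is the constant appearing in hypothesis~\ref{Hypo-gluing_operator}.

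First I would fix an arbitrary competitor $\gamma_0 \in \mathscr{P}_{\mu_0}(\X\times\Y)$. Lemma~\ref{lemma:gluing_method} produces $\gamma_1 \in \mathscr{P}_{\mu_1}(\X\times\Y)$ with $\mathcal{J}(\gamma_1) \le \mathcal{J}(\gamma_0) + 4C W_1(\mu_0,\mu_1)$; since $\gamma_1$ is admissible for the problem with fixed marginal $\mu_1$, this gives $V(\mu_1) \le \mathcal{J}(\gamma_0) + 4C W_1(\mu_0,\mu_1)$. The key observation is that the constant $4C$ is independent of the chosen $\gamma_0$, so I may pass to the infimum over all $\gamma_0 \in \mathscr{P}_{\mu_0}(\X\times\Y)$ on the right-hand side to obtain $V(\mu_1) \le V(\mu_0) + 4C W_1(\mu_0,\mu_1)$. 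I would then exchange the roles of $\mu_0$ and $\mu_1$, using that $W_1$ is symmetric, to deduce the reverse bound $V(\mu_0) \le V(\mu_1) + 4C W_1(\mu_0,\mu_1)$, and combine the two to reach~\eqref{eq:stability_value_function}.

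There is essentially no serious obstacle here, since Lemma~\ref{lemma:gluing_method} does the heavy lifting. The only point requiring care is the meaning of the estimate when one of the infima equals $+\infty$. The one-sided bound already transfers finiteness: if $V(\mu_0) < +\infty$ then $V(\mu_1) < +\infty$, and conversely, so the two values are simultaneously finite or infinite, and the Lipschitz estimate is meaningful precisely in the finite case (which, by Lemma~\ref{lemma.condition_inf_finite}, is controlled by the capacity condition $\capp_{c,H}(\{L<+\infty\})>0$). I would also record that the constant in~\eqref{eq:stability_value_function} can be made explicit as $4C$, inherited directly from the gluing operator of hypothesis~\ref{Hypo-gluing_operator}, so that the ``for some $C>0$'' in the statement is entirely quantitative.
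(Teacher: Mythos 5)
Your proof is correct and follows essentially the same route as the paper: apply Lemma~\ref{lemma:gluing_method} to transfer a competitor from $\mathscr{P}_{\mu_0}(\X\times\Y)$ to $\mathscr{P}_{\mu_1}(\X\times\Y)$ at cost $4CW_1(\mu_0,\mu_1)$, then symmetrize in $\mu_0$ and $\mu_1$. The only (minor, and harmless) difference is that the paper starts from an optimal $\gamma_0$, whereas you take an arbitrary competitor and pass to the infimum, which slightly more carefully avoids assuming the infimum is attained.
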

\begin{proof}
    Let $\gamma_0 \in \mathscr{P}_{\mu_0}(\X\times\Y)$ be optimal for the minimization of $\mathcal J$ over $\mathscr{P}_{\mu_0}(\X\times\Y)$, that is,
    \[
        \mathcal{J}(\gamma_0) = \min_{\mathscr{P}_{\mu_0}(\X\times\Y)}\mathcal{J}.   
    \]  
    Let $\gamma_1$ be the measure obtained from the gluing method in Lemma~\ref{lemma:gluing_method}. It then holds that
    \[
        \inf_{\mathscr{P}_{\mu_1}(\X\times\Y)}\mathcal{J} 
        - 
        \inf_{\mathscr{P}_{\mu_0}(\X\times\Y)} \mathcal{J}
        \le
        \mathcal{J}(\gamma_1) - \mathcal{J}(\gamma_0)
        \le 
        4CW_1(\mu_0, \mu_1), 
    \]
    where $C$ is the constant from~\ref{Hypo-gluing_operator}.
    Changing the roles of $\mu_0$ and $\mu_1$, we conclude. 
\end{proof}

\begin{example}[Back to Example~\ref{example.lagrangianMFG}]
    Let us now give further details for the Lagrangian MFG discussed in Example~\ref{example.lagrangianMFG}. We consider a model where a population of agents tries to reach a target set in minimal time under pairwise interactions.
    
    For simplicity, let $\Omega \subset \mathbb{R}^d$ be a convex set, and let $\Gamma \subset \Omega$ be the target set of the players. In this case $\X = \Omega$ and $\Y = C(\mathbb{R}_+; \Omega)$, the continuous functions with values in $\Omega$. For $\sigma \in \Y$ we set 
    \[
        \tau(\sigma) 
        \eqdef 
        \inf\{
          t\ge 0 : \sigma(t) \in \Gamma  
        \},
    \]
    the minimal time to reach the target and
    \[
        c(x_0,\sigma) = 
        \begin{dcases}
            0,& \text{if $\sigma(0) = x_0$},\\
            +\infty,& \text{ otherwise.}
        \end{dcases}    
    \]
    The individual and interaction energies are given by 
    \begin{align*}
        L(\sigma) &\eqdef 
        \int_0^{\tau(\sigma)}   
        \ell(t, \sigma(t), \dot{\sigma}(t))\dd t + \Psi(\sigma_\tau), \\
        H(\sigma, \bar \sigma)
        &\eqdef 
        \int_0^{\tau(\sigma)\wedge \tau(\bar \sigma)}   
        h(t, \sigma(t), \dot{\sigma}(t), \bar \sigma(t), \dot{\bar \sigma}(t))\dd t.
    \end{align*}
    For simplicity, we assume that if $\dot{\sigma}(t) = 0$, then we have $\ell(t, \sigma(t), \dot{\sigma}(t)) = 0$ and $h(t, \sigma(t), \dot{\sigma}(t), \bar \sigma(t), \dot{\bar \sigma}(t)) = 0$, $\ell$ and $h$ are bounded non-negative functions, and that $\sigma$ remains constant after reaching $\Gamma$ for the first time. 

    In order to have a small perturbation of the energies, the easiest way is to preserve the stopping time, hence given $\sigma$ such that $\sigma(0) = x_0$ we search for a curve of the form 
    \[
        \sigma_{x_1}(t) 
        \eqdef 
        \begin{dcases}
            \left(1 - \frac{t}{t_0}\right)x_1 + \frac{t}{t_0}\sigma(t_0),& 
            \text{ if } t\in[0,t_0],\\ 
            \sigma(t),& \text{ otherwise.}
        \end{dcases}    
    \]
    Therefore, choosing $t_0 \le \min\{\tau_\sigma, |x_0-x_1|\}$ we obtain that
    \[
        L(\sigma_{x_1}(t))
        \le 
        L(\sigma) + \int_0^{t_0}\ell(t, \sigma_{x_1}(t), \dot{\sigma}_{x_1}(t))\dd t 
        \le 
        L(\sigma) + C |x_0-x_1|.    
    \]
    An analogous reasoning for $H$ gives the required result. Hence \ref{Hypo-gluing_operator} is satisfied in this example.
\end{example}

\section{Convergence of Nash to Cournot--Nash equilibria}\label{sec:convergence}

In this section we prove our convergence results of Nash to Cournot--Nash equilibria. For readability, we perform the analysis in both open- and closed-loop cases separately, even if at the cost of some repetition. In this way, both results can be read independently. We start by recalling the definitions of each formulation described in the introduction and discuss them in more details as well. 

First, let us recall the definition of Nash equilibrium and introduce some notation. 
\begin{definition}[Nash equilibrium in $N$-player game]\label{def.Nashequi}
    An \emph{$N$-player game in pure strategies} is a tuple ${\left(g_i, S_i\right)}_{i = 1}^N$ where $S_i$ denotes the space of admissible plays for player $i$ and $g_i$ is a function
\[
    g_i \colon S_i \times S_{-i} \ni (x_i, x_{-i}) \mapsto g_i(x_i, x_{-i}) \in \mathbb{R}
    \text{ where }
    S_{-i} \eqdef \prod_{j \neq i} S_j.
\] 
Given an admissible profile of strategies ${\left(x_j\right)}_{j = 1}^N$, $x_{-i}$ corresponds to the tuple of strategies deprived of $x_i$ and the quantity $g_i(x_i, x_{-i})$ represents the cost of player $i$ choosing $x_i$ given that the remaining players choose $x_{-i}$. A profile $(x_i)_{i = 1}^N$ is called a \emph{Nash equilibrium} if no player has the incentive to deviate from their strategy, \textit{i.e.},
\[
    g_i(x_i,x_{-i}) \le  g_i(x_i',x_{-i}) \text{ for all $x_i'\in S_i$ and all $i=1,\dots,N$}. 
\]
\end{definition}

A game in mixed strategies, or mixed plays, is a tuple ${\left(\bar g_i, \mathscr{P}(S_i)\right)}_{i = 1}^N$, such that 
\[
    \bar g_i(\nu_i, \nu_{-i}) 
    \eqdef 
    \int_{S_i \times S_{-i}}
    g_i(x_i, x_{-i})
    \dd \nu_i\otimes \nu_{-i}(x_i, x_{-i}),
\] 
where $\nu_{-i} \eqdef \nu_1\otimes \dotsb \otimes \nu_{i-1} \otimes \nu_{i+1} \otimes \dotsb \otimes \nu_N$.

In order to define the games in open and closed loop information briefly discussed in the introduction we fix a probability space $(\Omega, \mathcal{F}, \mathbb{P})$ and consider an i.i.d.~sample of agents ${\left(X_i\right)}_{i \in \mathbb{N}}$ with common law $\mu \in \mathscr{P}(\X)$, where the first $N$ elements represent the type of the agents in our $N$-player game.

\subsection{Convergence in the open-loop formulation}\label{sec:conv_open_loop}

\subsubsection{Potential structure in open loop}
In the open-loop formulation, as each player chooses a strategy before having the knowledge of the realization of the sample, the type of player $i$ is better described by the random variable $X_i$ and an admissible strategy must be given by a measurable family ${\left(\nu^x\right)}_{x \in \X} \subset \mathscr{P}(\Y)$, or equivalently by a random probability measure as discussed in Section~\ref{subsec:random_probability}, see also~\cite{crauel2002random,kallenberg2017random}. In other words, instead of choosing a deterministic strategy, given a state $x$, a player chooses a distribution of strategies $\nu^x$. The criterion that each player seeks to minimize is then
\begin{equation*}\label{eq:game_ol_mix}
        \tag{$NP_\Omega$}
        \min_{\nub_i \in \mathscr{P}_{\Omega}(\Y)} {}  
        \mathcal{J}_{\Omega,i}\left(\nub_i, \nub_{-i}\right) 
        \eqdef 
        \mathbb{E}
        \left[
        \int_\Y c(X_i,y)\dd \nu_i^{X_i} + \mathcal{L}\left(\nu_i^{X_i}\right)
        + 
        \frac{2}{N}\sum_{i \neq j}
        \mathcal{H}\left(
            \nu_i^{X_i},\nu_j^{X_j}
        \right)
        \right],
\end{equation*}
where $\nub_i = \nu_i^{X_i}$ for some measurable map ${(\nu_i^x)}_{x \in \X}$.

A profile ${\left(\nub_i\right)}_{i = 1}^N$ is said to be in pure strategies if each $\nub_i$ is a Dirac delta with full probability and can then be described with a map as measures of the form $\nub_i = \delta_{Y_i}$. The formulation in pure strategies can then be expressed as 
\begin{equation}
    \begin{aligned}
        \min_{Y_i \text{ is $X_i$-measurable}} {}  
        \mathcal{J}_{\Omega, i}(Y_i, Y_{-i}) 
        &=
        \mathbb{E}
        \left[
            c(X_i,Y_i)
            +
            L(Y_i)
            + 
            \frac{2}{N}\sum_{i \neq j}\mathcal{H}(Y_i,Y_j)
        \right].
        \end{aligned}
\end{equation}

As introduced in Section~\ref{subsec:random_probability}, we let $\mub_N \in \mathscr{P}_{\Omega}(\X)$ be the random measure obtained via the sample of random variables $\displaystyle \mub_N \eqdef \frac{1}{N}\sum_{i = 1}^N\delta_{X_i}$, and we define the space of random transportation plans induced by a profile of strategies
\begin{multline}\label{eq.plans_induced_strategies}
    \mathscr{P}_{\Omega, N}(\X \times \Y)
    \eqdef 
    \Biggl\{
        \gammab_N 
        = \frac{1}{N}\sum_{i = 1}^N \delta_{X_i}\otimes\nu_i^{X_i}: \\
        {\left(\nu^x_i\right)}_{x \in \X} \text{ is measurable for all $i = 1,\dots,N$}
    \Biggr\},
\end{multline}
where we recall the definition of measurable family of measures from Definition~\ref{def.measurable_family_measures}. Notice that this class of random measures is contained in the set of random measures whose first marginal coincides with $\mub_N$, but with the particularity that the stochasticity in the family of disintegration is restricted to the evaluation of the random variables $X_i$, whereas in this larger class of measures, we could have a stochastic behaviour in the disintegration family as well. 

The potential function in open-loop formulation is defined by
\begin{equation}\label{eq:potential_function_openloop}
    \mathcal{J}_{\Omega, N}(\gammab_N)
    \eqdef
    \begin{dcases}
        \begin{aligned}
            \mathbb{E}\left[
            \int_{\X\times \Y} c \dd \gammab_N +
            \frac{1}{N}\sum_{i = 1}^N \mathcal{L}\left(\nu_i^{X_i}\right)
             \right.\\ 
            \left.
            {} + 
            \frac{1}{N^2}\sum_{i \neq j} 
            \mathcal{H}\left(\nu_i^{X_i}, \nu_j^{X_j}\right)
        \right],
        \end{aligned}
        &
        \text{ if } \gammab_N\in\mathscr{P}_{\Omega,N}(\X\times\Y),\\ 
        +\infty,& \text{ otherwise.}
    \end{dcases} 
\end{equation}
Observe that, since by Hypothesis~\ref{Hypo-atomless} the measure $\mu$ has no atoms, we have $\mathbb{P}\left(\{X_i = X_j\}\right) = 0$ for all $i \neq j$, so that the disintegration families of the plans in the set $\mathscr{P}_{\Omega,N}(\X \times \Y)$ from~\eqref{eq.plans_induced_strategies} are almost surely uniquely defined. In fact, the disintegration theorem (Theorem~\ref{them:disintegration}) gives a canonical bijection between the set of random measures $\mathscr{P}_{\Omega,N}\left(\X\right)$ and the set of strategy profiles admissible for the open-loop formulation. As a consequence, the potential function~\eqref{eq:potential_function_openloop} is well-defined and becomes a suitable candidate for potential function to describe equilibria. Indeed, since $c$, $L$ and $H$ are l.s.c.~and $L$ has compact level sets, it admits minimizers, which induce Nash equilibria for the corresponding game, as we shall prove next. 

\begin{proposition}\label{proposition:NP-potential_open}
    Let 
        \[
                \gammab_N = \frac{1}{N}\sum_{i = 1}^N \delta_{X_i}\otimes\nu_i^{X_i} \in \argmin \mathcal{J}_{\Omega,N}.
        \]
        Then ${\left(\nu_i^{X_i}\right)}_{i=1}^N$ is a Nash equilibrium for the game~\eqref{eq:game_ol_mix}.
\end{proposition}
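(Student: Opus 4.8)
The plan is to verify that $\mathcal{J}_{\Omega,N}$ is an \emph{exact potential} for the open-loop game, in the sense that, for each fixed player $k$, the dependence of $\mathcal{J}_{\Omega,N}$ on the strategy of player $k$ coincides, up to the multiplicative factor $1/N$ and an additive term independent of that strategy, with the dependence of the individual cost $\mathcal{J}_{\Omega,k}$ on the same strategy. Once this identity is established, the Nash property of a minimizer is immediate.

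First I would rewrite the potential in a form that isolates the contribution of a single player. Using $\gammab_N = \frac1N\sum_{i=1}^N\delta_{X_i}\otimes\nu_i^{X_i}$, the transport term reads $\int_{\X\times\Y}c\dd\gammab_N = \frac1N\sum_{i=1}^N\int_\Y c(X_i,y)\dd\nu_i^{X_i}(y)$, so that
\[
\mathcal{J}_{\Omega,N}(\gammab_N)
=
\frac1N\sum_{i=1}^N \mathbb{E}\Big[\int_\Y c(X_i,y)\dd\nu_i^{X_i} + \mathcal{L}(\nu_i^{X_i})\Big]
+
\frac{1}{N^2}\sum_{i\neq j}\mathbb{E}\big[\mathcal{H}(\nu_i^{X_i},\nu_j^{X_j})\big].
\]
Fixing a player $k$, I would collect every summand involving $\nu_k^{X_k}$. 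There is exactly one individual term (the one with $i=k$), and in the double sum the relevant terms are those with $i=k$ (summed over $j\neq k$) together with those with $j=k$ (summed over $i\neq k$). By the symmetry of $H$ from Hypothesis~\ref{Hypo-H-symmetric}, these two groups are equal, and together they produce $\frac{2}{N^2}\sum_{j\neq k}\mathbb{E}[\mathcal{H}(\nu_k^{X_k},\nu_j^{X_j})]$. Collecting these contributions gives
\[
\mathcal{J}_{\Omega,N}(\gammab_N)
=
\frac1N\,\mathcal{J}_{\Omega,k}(\nub_k,\nub_{-k}) + R_k,
\]
where $R_k$ gathers all terms not depending on $\nu_k^{X_k}$ and the bracketed quantity has been recognized as the individual cost $\mathcal{J}_{\Omega,k}$ of~\eqref{eq:game_ol_mix}.

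With this identity at hand, the conclusion is a one-line argument. Let $\tilde\nub_k = \tilde\nu_k^{X_k}$ be any admissible deviation of player $k$, and let $\tilde\gammab_N$ be obtained from $\gammab_N$ by replacing $\nu_k^{X_k}$ with $\tilde\nu_k^{X_k}$; since only the disintegration of player $k$ changes, $\tilde\gammab_N$ remains in $\mathscr{P}_{\Omega,N}(\X\times\Y)$ and is therefore an admissible competitor. As $R_k$ is unchanged by this substitution and $\gammab_N$ minimizes $\mathcal{J}_{\Omega,N}$, the inequality $\mathcal{J}_{\Omega,N}(\gammab_N)\le\mathcal{J}_{\Omega,N}(\tilde\gammab_N)$ reduces, after cancelling $R_k$ and multiplying by $N$, to $\mathcal{J}_{\Omega,k}(\nub_k,\nub_{-k})\le\mathcal{J}_{\Omega,k}(\tilde\nub_k,\nub_{-k})$. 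Since $k$ and the deviation are arbitrary, $(\nu_i^{X_i})_{i=1}^N$ is a Nash equilibrium for~\eqref{eq:game_ol_mix}.

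The argument is elementary, so the only points requiring care are bookkeeping ones. I must ensure that the factor $2$ in the interaction part of $\mathcal{J}_{\Omega,k}$ is correctly recovered from merging the $i=k$ and $j=k$ contributions of the double sum through the symmetry of $H$, and that the scaling $\frac{1}{N^2}$ in the potential against $\frac{2}{N}$ in the individual cost indeed yields the common factor $1/N$ on both the individual and interaction terms. I would also record explicitly that the disintegration families are almost surely uniquely determined because $\mu$ is atomless (Hypothesis~\ref{Hypo-atomless}), so that replacing the strategy of a single player is an unambiguous operation on $\mathscr{P}_{\Omega,N}(\X\times\Y)$.
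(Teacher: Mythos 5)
Your proposal is correct and follows essentially the same route as the paper's proof: both isolate player $k$'s contribution to $\mathcal{J}_{\Omega,N}$ by using the symmetry of $H$ to merge the $i=k$ and $j=k$ terms of the double sum into $\tfrac{2}{N^2}\sum_{j\neq k}\mathbb{E}\bigl[\mathcal{H}(\nu_k^{X_k},\nu_j^{X_j})\bigr]$, recognize the resulting player-$k$ block as $\tfrac1N\mathcal{J}_{\Omega,k}(\nub_k,\nub_{-k})$, and conclude by substituting a unilateral deviation into the minimality inequality and cancelling the deviation-independent remainder. Your explicit ``exact potential'' phrasing and the remark on a.s.\ uniqueness of the disintegrations (via Hypothesis~\ref{Hypo-atomless}) are just tidier packaging of the same computation the paper carries out inline.
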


\begin{proof}
    Let $\nub_j = \nu_j^{X_j}$ and fix some index $i \in \{1,\dots, N\}$ to consider a deviation, so that player $i$ chooses $\bar\nub_i$ instead. Consider the new random transportation plan
    \[
        \bar \gammab_N
        \eqdef 
        \frac{1}{N} \sum_{\substack{j = 1\\ j \neq i}}^N \delta_{X_j}\otimes\nub_j 
        + 
        \frac{1}{N} \delta_{X_i} \otimes \bar\nub_i. 
    \]
    First notice that, from the symmetry of $H$, we have
    \begin{align*}
        \sum_{j\neq k}H(y_j,y_k)
        &= 
        \sum_{\substack{j,k = 1 \\ i \neq j,\, j \neq k,\, i \neq k}}^N H(y_j,y_k)
        + 
        \sum_{\substack{k = 1\\ k \neq i}}^NH(y_i,y_k)
        +
        \sum_{\substack{j = 1\\ j \neq i}}^NH(y_j,y_i)\\ 
        &= 
        \sum_{\substack{j,k = 1 \\ i \neq j,\, j \neq k,\, i \neq k}}^N H(y_j,y_k)
        +
        2\sum_{\substack{j = 1\\ j \neq i}}^NH(y_j,y_i),
    \end{align*}
    so the minimality of $\gammab_N$ gives 
    {\small
    \begin{align*}
        & \frac{1}{N}
        \left(
            \mathbb{E}\left[
            \sum_{\substack{j = 1\\ j \neq i}}^N
            \int_\Y[c(X_j, y_j) + L(y_j)]\dd \nub_j 
            + 
            \frac{1}{N}
            \sum_{\substack{j,k = 1 \\ i \neq j,\, j \neq k,\, i \neq k}}^N
            \mathcal{H}(\nub_j,\nub_k)
            \right]
            + 
            \mathcal{J}_{\Omega, i}(\nub_i, \nub_{-i})
        \right) \\
        & =
        \mathcal{J}_{\Omega, N}(\gammab_N) 
        \le
        \mathcal{J}_{\Omega, N}(\bar \gammab_N) \\
        & =
        \frac{1}{N}
        \left(
            \mathbb{E}\left[
            \sum_{\substack{j = 1\\ j \neq i}}^N
            \int_\Y[c(X_j, y_j) + L(y_j)]\dd \nub_j 
            + 
            \frac{1}{N}
            \sum_{\substack{j,k = 1 \\ i \neq j,\, j \neq k,\, i \neq k}}^N
            \mathcal{H}(\nub_j,\nub_k)
            \right]
            + 
            \mathcal{J}_{\Omega, i}(\bar\nub_i, \nub_{-i})
        \right).
    \end{align*}}%
    Canceling out the repeated terms we obtain that $\mathcal{J}_{\Omega, i}(\nub_i, \nub_{-i}) \le \mathcal{J}_{\Omega, i}(\bar\nub_i, \nub_{-i})$, meaning that the profile $(\nub_1,\dots,\nub_N)$ is a Nash equilibrium. 
\end{proof}

\subsubsection{\texorpdfstring{$\Gamma$}{Gamma}-convergence for the open-loop formulation}

The $\Gamma$-convergence result relies on the characterization of the cluster points of random measures in the set $\mathscr{P}_{\Omega,N}(\X\times \Y)$ as $N \to +\infty$. Indeed, since the stochasticity of this class of measures is contained in the first marginal given by the empirical measure of an i.i.d.\ sample, we can expect that the limit will be a non-random measure. This is proved in the following result.

\begin{lemma}\label{lemma:convergente_random_to_deterministic}
    Let $\left(\gammab_N\right)_{N \in \mathbb N}$ be a sequence of random measures such that $\gammab_N \in \mathscr{P}_{\Omega,N}(\X\times\Y)$ for all $N \in \mathbb{N}$ and such that the corresponding sequence of expectation measures converges in the narrow topology of $\mathscr{P}(\X\times\Y)$, that is
    \[
        \mathbb{E}\gammab_N 
        \cvweak{N \to \infty} 
        \gamma.
    \]
    for some $\gamma \in \mathscr P(\X \times \Y)$.
    Then $\gamma \in \mathscr{P}_\mu(\X\times\Y)$ and $\gammab_N$ converges in the narrow topology of $\mathscr{P}_{\Omega}(\X\times \Y)$ to a random measure $\gammab$ that coincides with $\gamma$ almost surely. 
\end{lemma}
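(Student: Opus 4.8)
The plan is to prove the two assertions in turn: first the marginal constraint $\gamma\in\mathscr{P}_\mu(\X\times\Y)$, which is immediate, and then the convergence in the narrow topology of random measures, which is the substantial part. For the first, note that for every $N$ the first marginal of the expectation measure is exactly $\mu$: writing $\gammab_N=\frac1N\sum_i\delta_{X_i}\otimes\nu_i^{X_i}$ and testing against a $\phi\in\mathscr{C}_b(\X)$ depending only on $x$ gives $\int_\X\phi\dd(\pi_\X)_\sharp\mathbb{E}\gammab_N=\mathbb{E}\big[\tfrac1N\sum_i\phi(X_i)\big]=\int_\X\phi\dd\mu$, since each $X_i$ has law $\mu$. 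As pushforward by the continuous map $\pi_\X$ is narrowly continuous and $\mathbb{E}\gammab_N\cvweak{N\to\infty}\gamma$, passing to the limit yields $(\pi_\X)_\sharp\gamma=\mu$. The candidate limit is then the deterministic random measure $\gammab\equiv\gamma$, whose expectation measure is $\gamma$; it remains to prove $\mathbb{E}\big[\int f\dd\gammab_N\big]\to\mathbb{E}\big[\int f(\omega,\cdot)\dd\gamma\big]$ for every $f\in C_\Omega(\X\times\Y)$.

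The heart of the argument is a second-moment estimate for \emph{deterministic} test functions, which is where the special structure of $\mathscr{P}_{\Omega,N}$ (randomness carried only by the i.i.d.\ sample in the first marginal) enters. For $\phi\in\mathscr{C}_b(\X\times\Y)$ the real random variable $\int\phi\dd\gammab_N=\frac1N\sum_i h_i(X_i)$, with $h_i(x)\eqdef\int_\Y\phi(x,y)\dd\nu_i^x(y)$, is an average of the \emph{independent}, uniformly bounded ($|h_i|\le\|\phi\|_\infty$) variables $h_i(X_i)$. Hence its variance is at most $\|\phi\|_\infty^2/N$, while its mean $\frac1N\sum_i\int_\X h_i\dd\mu=\int\phi\dd\mathbb{E}\gammab_N$ tends to $\int\phi\dd\gamma$ by hypothesis. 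Therefore $\int\phi\dd\gammab_N\to\int\phi\dd\gamma$ in $L^2(\Omega)$, a fortiori in $L^1(\Omega)$: the stochasticity asymptotically disappears.

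From here I would pass to random test functions. For a simple tensor $f=\sum_{k\le m}a_k(\omega)\phi_k(x,y)$ with $a_k\in L^\infty(\Omega)$ and $\phi_k\in\mathscr{C}_b(\X\times\Y)$, the bound $\big|\mathbb{E}[a_k(\int\phi_k\dd\gammab_N-\int\phi_k\dd\gamma)]\big|\le\|a_k\|_\infty\,\mathbb{E}\big|\int\phi_k\dd\gammab_N-\int\phi_k\dd\gamma\big|$ combined with the previous step gives $\mathbb{E}[\int f\dd\gammab_N]\to\mathbb{E}[\int f(\omega,\cdot)\dd\gamma]$. To reach an arbitrary $f\in C_\Omega(\X\times\Y)$ I approximate it by such tensors. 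Since the sequence $(\mathbb{E}\gammab_N)_N$ together with its limit $\gamma$ is tight (Theorem~\ref{thm:Prokhorov}), I fix $\varepsilon>0$ and a compact $K\subset\X\times\Y$ with $\mathbb{E}\gammab_N(K^c)<\varepsilon$ for all $N$ and $\gamma(K^c)<\varepsilon$. On the separable space $C(K)$ the map $\omega\mapsto f(\omega,\cdot)|_{K}$ is Bochner measurable (Pettis' theorem, using joint measurability of $f$ and separability of $C(K)$) and lies in $L^1(\Omega;C(K))$, its norm being dominated by $\|f(\omega,\cdot)\|_\infty\in L^1(\Omega)$; hence it is approximated in $L^1(\Omega;C(K))$ by $\mathcal{F}$-simple functions $\sum_k\mathbbm{1}_{A_k}(\omega)\psi_k$ with $\psi_k\in C(K)$, which I extend to $\mathscr{C}_b(\X\times\Y)$ by Tietze. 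The contribution off $K$ is controlled uniformly in $N$ through $\mathbb{E}[\int_{K^c}|f|\dd\gammab_N]\le\mathbb{E}[\|f(\omega,\cdot)\|_\infty\,\gammab_N(K^c)]$, the Markov bound $\mathbb{P}(\gammab_N(K^c)>\delta)\le\varepsilon/\delta$, and the uniform integrability of the single $L^1$ function $\|f(\omega,\cdot)\|_\infty$. Combining the tensor case with these approximations and letting $\varepsilon\to0$ gives the required convergence, and identifies the limit as $\gamma$.

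The delicate point, and the main obstacle, is precisely this last passage to genuinely $\omega$-dependent test functions. One cannot simply invoke almost sure narrow convergence of $\gammab_N(\omega)$: the disintegration families $\nu_i^x$ may be chosen completely differently for each $N$, so no pathwise limit need exist, and only the expectation measures are assumed to converge. Moreover $\mathscr{C}_b(\X\times\Y)$ is in general non-separable, which is what forces the reduction to a compact set before a Bochner/Tietze approximation can be applied. Once this is set up, every estimate reduces to the clean variance computation of the second step.
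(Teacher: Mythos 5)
Your proof is correct, and its core mechanism is the same as the paper's: you isolate the fact that the randomness of $\gammab_N$ sits entirely in the i.i.d.\ first marginal, so that for a deterministic test function $\phi$ the real random variable $\int \phi \dd\gammab_N = \frac{1}{N}\sum_i h_i(X_i)$, with $h_i(x) = \int_\Y \phi(x,y)\dd\nu_i^x(y)$, is an average of independent variables bounded by $\norm{\phi}_{L^\infty}$, and you conclude by concentration. There are two genuine differences. First, where the paper applies Hoeffding's inequality \eqref{eq:hoeffding} with $\varepsilon = N^{-1/3}$ (whose summable tails also yield, via Borel--Cantelli, the $\mathbb{P}$-a.s.\ narrow convergence noted in the remark following the lemma), you use the variance bound $\mathrm{Var}\bigl(\int\phi\dd\gammab_N\bigr) \le \norm{\phi}_{L^\infty}^2/N$; this is more elementary and fully sufficient for the lemma as stated ($L^2$, hence $L^1$, convergence against bounded $\Theta$), though it does not by itself recover the almost-sure strengthening, since $\sum 1/N$ diverges. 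Second, and to your credit, you carry out in detail the passage from product integrands $\Theta(\omega)\varphi(x,y)$ to arbitrary $f \in C_\Omega(\X\times\Y)$: the paper tests only against such products and asserts that this identifies the limit in the narrow topology of random measures, implicitly using the standard fact that products generate that topology, whereas your tightness--Bochner--Tietze argument (restriction to a compact $K$ with $\mathbb{E}\gammab_N(K^c)<\varepsilon$ uniformly in $N$, Pettis measurability of $\omega\mapsto f(\omega,\cdot)|_K$ in the separable space $C(K)$, approximation by simple tensors, and the Markov/uniform-integrability control of the contribution off $K$) proves exactly this sufficiency. Your preliminary step, passing the identity $(\pi_\X)_\sharp\mathbb{E}\gammab_N = \mu$ to the limit by continuity of the pushforward to get $\gamma \in \mathscr{P}_\mu(\X\times\Y)$, is also correct and is left implicit in the paper.
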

\begin{remark}
    The major difficulty of the following proof comes from the fact that the operation of disintegration, the conditional expectation, is not continuous w.r.t.~weak convergence in general. In order words, if a sequence of measures ${\left(\gamma_N\right)}_{N \in \mathbb{N}}$ converging weakly to $\gamma$ has the disintegration representation $\gamma_N = \mu\otimes \nu_N^x$ and $\gamma = \mu\otimes \nu^x$, it does not hold in general that $\nu_N^x \xrightharpoonup[N \to \infty]{} \nu^x$ for $\mu$~a.e.~$x$. 
\end{remark}
\begin{proof}
    It follows directly from~\eqref{eq.plans_induced_strategies} that the sequence ${\left(\gammab_N\right)}_{N \in \mathbb{N}}$ can be written as 
    \[
        \gammab_N = \frac{1}{N}
        \sum_{i = 1}^N \delta_{X_i}\otimes\nu_{i,N}^{X_i}. 
    \]
    To finish the proof, it suffices to show that, for any real-valued, bounded and $(\Omega,\mathcal{F},\mathbb{P})$-adapted random variable $\Theta$ and $\varphi \in \mathscr{C}_b(\X\times\Y)$, it holds that
    \begin{equation}\label{eq.convergence_rv_Hoeffding}
        \Delta_{N,\Theta} \eqdef 
        \left|
            \mathbb{E}
            \left[
                \Theta \int_{\X\times\Y} \varphi \dd \gammab_N
            \right]
            -
            \mathbb{E}
            \left[
                \Theta
            \right]
            \int_{\X\times\Y} \varphi \dd \mathbb{E}\gammab_N
        \right|
        \cvstrong{N \to \infty}{}
        0,
    \end{equation}
    since then we will have that 
    \[
        \lim_{N \to \infty}
        \mathbb{E}
        \left[
            \Theta 
            \left(
                \int_{\X\times\Y} \varphi \dd(\gammab_N - \gamma)
            \right)
        \right] = 0
    \] 
    for any bounded random variable $\Theta$, meaning that $\gammab = \gamma$ almost surely. 

    For this, we will use \emph{Hoeffding's inequality}, which states that, if $Z_1, \dots, Z_N$ are independent real variables such that $a \le Z_i \le b$ for some real numbers $a, b$ and all $i$ almost surely, then 
    \begin{equation}\label{eq:hoeffding}
        \mathbb{P}\left(
            \left|
                \frac{1}{N}\sum_{i = 1}^N 
                (Z_i - \mathbb{E}Z_i)
            \right|\ge \varepsilon
        \right)
        \le 2\exp\left(-\frac{2N\varepsilon^2}{{(b-a)}^2}\right). 
    \end{equation}
    Notice that we can rewrite 
    \[
        \int_{\X\times\Y} \varphi \dd \gammab_N = 
        \frac{1}{N} \sum_{i = 1}^N \tilde \varphi_{i,N}
        \text{ where }        
        \tilde \varphi_{i,N}
        \eqdef 
        \int_{\Y} \varphi(X_i,y) \dd \nu_{i,N}^{X_i},
    \]
    so that ${\left(\tilde \varphi_{i,N}\right)}_{i=1}^N$ are independent real random variables, $|\tilde \varphi_{i,N}| \le \norm{\varphi}_{L^\infty}$ and 
    \[
        \frac{1}{N}\sum_{i = 1}^N 
        \mathbb{E}\tilde \varphi_{i,N}
        = 
        \int_{\X\times\Y} \varphi \dd \mathbb{E}\gammab_N.
    \] 
    So setting 
    \[
        A_{\varepsilon,N} \eqdef 
        \left\{\left|
        \frac{1}{N} \sum_{i = 1}^N 
        \left(
        \tilde \varphi_{i,N}
        -
        \mathbb{E}\tilde \varphi_{i,N}
        \right)
        \right| \ge \varepsilon \right\},
    \]  
    we can use Hoeffding's inequality to bound the LHS of~\eqref{eq.convergence_rv_Hoeffding}
    \begin{align*}
        \Delta_{N,\Theta} 
        &\le 
        \mathbb{E}
        \left[
            |\Theta|
            \left|
                \frac{1}{N} \sum_{i = 1}^N 
                \left(
                \tilde \varphi_{i,N}
                -
                \mathbb{E}\tilde \varphi_{i,N}
                \right)
            \right|
        \right]\\ 
        &\le 
        \norm{\Theta}_{L^\infty} 
        \int_{A_{\varepsilon,N}} 
            \left|
            \frac{1}{N} \sum_{i = 1}^N 
            \left(
            \tilde \varphi_{i,N}
            -
            \mathbb{E}\tilde \varphi_{i,N}
            \right)
            \right|
            \dd \mathbb{P}
        +
        \norm{\Theta}_{L^\infty} \varepsilon\\ 
        &\le 
        2\norm{\Theta}_{L^\infty}\norm{\varphi}_{L^\infty} 
        \mathbb{P}(A_{\varepsilon,N})
        + 
        \norm{\Theta}_{L^\infty} \varepsilon\\ 
        &\le 
        4\norm{\Theta}_{L^\infty}\norm{\varphi}_{L^\infty} 
        \exp\left(-\frac{N\varepsilon^2}{2\norm{\varphi}_{L^\infty}^2}\right)
        + 
        \norm{\Theta}_{L^\infty} \varepsilon
    \end{align*}
    Choosing $\varepsilon = N^{-1/3}$, we get that $\Delta_{N,\Theta} \cvstrong{N \to \infty}{} 0$. We conclude that $\gammab = \gamma$ almost surely. 
\end{proof}

\begin{remark}
    In fact we have shown that $\gammab_N$ converges to $\gamma$ in the much stronger topology of narrow convergence $\mathbb{P}$-almost surely.
\end{remark}

The previous lemma is the crucial observation that allows the passage of the limit of a sequence of stochastic variational problems to a deterministic one, as we shall see in the following $\Gamma$-convergence result. 
\begin{theorem}\label{theorem:gamma_conv_openloop}
    Given an i.i.d.~sample ${\left(X_i\right)}_{i\in\mathbb{N}}$ with law $\mu$, let $\mub_N \in \mathscr{P}_\Omega(\X)$ be the associated sequence of empirical random measures. Let $\mathcal{J}_{\Omega,N}$ be the sequence of potential functionals defined in~\eqref{eq:potential_function_openloop}. Then it holds that
    \[
        \mathcal{J}_{\Omega,N}
        \cvstrong{N \to \infty}{\Gamma}
        \mathcal{J}_\Omega,
    \]
    with
    \[
        \mathcal{J}_\Omega(\gammab)
        \eqdef 
        \begin{dcases}
            \mathcal{J}(\gamma),& \text{ if } \gammab = \gamma \text{ a.s.\ for some } \gamma \in \mathscr{P}_\mu(\X\times\Y),\\ 
            +\infty,& \text{ otherwise},
        \end{dcases}
    \]
    where the $\Gamma$-convergence is in $\mathscr{P}_{\Omega}(\X\times\Y)$ equipped with the narrow topology of random probability measures.
\end{theorem}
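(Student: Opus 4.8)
The plan is to verify the two inequalities of Definition~\ref{def.gamma_convergence} separately. Throughout I would exploit two facts: first, that narrow convergence of random measures implies narrow convergence of the associated expectation measures (test against deterministic $\varphi \in \mathscr{C}_b(\X\times\Y)$); and second, Lemma~\ref{lemma:convergente_random_to_deterministic}, which forces any admissible limit to be a \emph{deterministic} element of $\mathscr{P}_\mu(\X\times\Y)$. The computations rely on the independence of the $X_i$ to evaluate the expectation measures of the quadratic interaction term, and on the lower semicontinuity from Lemma~\ref{lemma.lsc_narrow}.

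For the $\Gamma$-$\liminf$ inequality I would take $\gammab_N \cvweak{N\to\infty} \gammab$ in $\mathscr{P}_\Omega(\X\times\Y)$, assume the $\liminf$ is finite (otherwise there is nothing to prove), and pass to a subsequence realizing it as a finite limit, so that each $\gammab_N \in \mathscr{P}_{\Omega,N}(\X\times\Y)$, say $\gammab_N = \frac1N\sum_i \delta_{X_i}\otimes \nu_{i,N}^{X_i}$. Since $\mathbb{E}\gammab_N \cvweak{N\to\infty} \mathbb{E}\gammab =: \gamma$, Lemma~\ref{lemma:convergente_random_to_deterministic} gives $\gamma \in \mathscr{P}_\mu(\X\times\Y)$ and, by uniqueness of the narrow limit, $\gammab = \gamma$ a.s., so that $\mathcal{J}_\Omega(\gammab) = \mathcal{J}(\gamma)$. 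Writing $\nu := (\pi_\Y)_\sharp \gamma$ and $\nu_N := (\pi_\Y)_\sharp \mathbb{E}\gammab_N$ (so $\nu_N \cvweak{N\to\infty} \nu$), I would bound the three terms of~\eqref{eq:potential_function_openloop} from below. The cost term equals $\int c \dd \mathbb{E}\gammab_N$, whose $\liminf$ dominates $\int c \dd\gamma$ by lower semicontinuity (Lemma~\ref{lemma.lsc_narrow}). The individual term equals $\mathcal{L}(\nu_N)$ by linearity and the definition of the expectation measure, and $\liminf \mathcal{L}(\nu_N) \ge \mathcal{L}(\nu)$ again by Lemma~\ref{lemma.lsc_narrow}.

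The delicate point is the interaction term. Here I would introduce the nonnegative measure $\Lambda_N := \frac{1}{N^2}\sum_{i\neq j} \mathbb{E}\bigl[\nu_{i,N}^{X_i}\otimes\nu_{j,N}^{X_j}\bigr]$ on $\Y\times\Y$, of total mass $1 - 1/N$, so that the term equals $\int_{\Y\times\Y} H \dd\Lambda_N$. Using that $X_i, X_j$ are independent for $i\neq j$ (whence $\mathbb{E}[\nu_{i,N}^{X_i}\otimes\nu_{j,N}^{X_j}] = \bar\nu_{i,N}\otimes\bar\nu_{j,N}$ with $\bar\nu_{i,N} := \mathbb{E}[\nu_{i,N}^{X_i}]$ and $\frac1N\sum_i \bar\nu_{i,N} = \nu_N$), one computes
\[
    \Lambda_N = \nu_N\otimes\nu_N - \frac{1}{N^2}\sum_{i=1}^N \bar\nu_{i,N}\otimes\bar\nu_{i,N},
\]
the diagonal correction having total mass $1/N \to 0$. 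Since $\nu_N\otimes\nu_N \cvweak{N\to\infty} \nu\otimes\nu$ by the first item of Lemma~\ref{lemma.lsc_narrow}, it follows that $\Lambda_N \cvweak{N\to\infty} \nu\otimes\nu$; as $H\ge 0$ is l.s.c., the map $\lambda\mapsto \int H\dd\lambda$ is narrowly l.s.c.\ on finite nonnegative measures (being a supremum of the continuous maps $\lambda \mapsto \int H_k\dd\lambda$ for $H_k \uparrow H$ bounded continuous), so $\liminf \int H\dd\Lambda_N \ge \mathcal{H}(\nu,\nu)$. Adding the three nonnegative $\liminf$ bounds yields $\liminf \mathcal{J}_{\Omega,N}(\gammab_N) \ge \mathcal{J}(\gamma) = \mathcal{J}_\Omega(\gammab)$.

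For the $\Gamma$-$\limsup$ inequality, the case $\mathcal{J}_\Omega(\gammab) = +\infty$ is settled by the constant sequence $\gammab_N := \gammab$, which converges to $\gammab$ and makes the $\limsup$ bound vacuous. When $\mathcal{J}_\Omega(\gammab) < +\infty$ we have $\gammab = \gamma$ a.s.\ with $\gamma \in \mathscr{P}_\mu(\X\times\Y)$ and $\mathcal{J}(\gamma) < +\infty$; disintegrating $\gamma = \mu\otimes\nu^x$ (Theorem~\ref{them:disintegration}), I would let every player use the same kernel and set $\gammab_N := \frac1N\sum_i \delta_{X_i}\otimes\nu^{X_i} \in \mathscr{P}_{\Omega,N}(\X\times\Y)$. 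A direct computation gives $\mathbb{E}\gammab_N = \gamma$ for every $N$, so $\gammab_N \cvweak{N\to\infty} \gamma$ by Lemma~\ref{lemma:convergente_random_to_deterministic}; the cost and individual terms equal $\int c\dd\gamma$ and $\mathcal{L}(\nu)$, while the same independence computation gives the interaction term $\frac{N-1}{N}\mathcal{H}(\nu,\nu)$. Hence $\mathcal{J}_{\Omega,N}(\gammab_N) = \int c\dd\gamma + \mathcal{L}(\nu) + \frac{N-1}{N}\mathcal{H}(\nu,\nu) \le \mathcal{J}(\gamma)$ for all $N$, giving $\limsup_N \mathcal{J}_{\Omega,N}(\gammab_N) \le \mathcal{J}(\gamma)$. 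I expect the main obstacle to be precisely the interaction term in the $\liminf$: recasting the diagonal-excluded double sum via independence as $\int H\dd\Lambda_N$ and certifying $\Lambda_N \cvweak{N\to\infty} \nu\otimes\nu$, since no Wasserstein-type continuity of $\mathcal{H}$ is available for a merely l.s.c.\ kernel $H$ and one must rely solely on nonnegativity and lower semicontinuity.
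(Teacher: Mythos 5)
Your proposal is correct and follows the same skeleton as the paper's proof: both reduce to deterministic limits via Lemma~\ref{lemma:convergente_random_to_deterministic}, prove the $\Gamma$-$\liminf$ by passing to the expectation measures $\mathbb{E}\gammab_N$, and use the identical recovery sequence $\gammab_N = \mub_N \otimes \nu^x$ with $\mathbb{E}\gammab_N = \gamma$, arriving at the same identity $\mathcal{J}_{\Omega,N}(\gammab_N) = \mathcal{J}(\gamma) - \frac{1}{N}\mathcal{H}(\nu,\nu)$. The one genuine divergence is the handling of the interaction term in the $\liminf$, which you correctly identify as the delicate point. The paper truncates the kernel, setting $H^M = H \wedge M$, so that the diagonal self-interaction correction is bounded by $M/N$ and the truncated deterministic functional $\mathcal{J}^M$ is l.s.c.\ directly by Lemma~\ref{lemma.lsc_narrow}; the conclusion follows by taking $\sup_{M>0}$ via monotone convergence. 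You instead keep $H$ untruncated and recast the off-diagonal double sum as $\int_{\Y\times\Y} H \dd \Lambda_N$ for the sub-probability measure $\Lambda_N = \nu_N\otimes\nu_N - \frac{1}{N^2}\sum_{i=1}^N \bar\nu_{i,N}\otimes\bar\nu_{i,N}$ of mass $1 - 1/N$ --- an identity of measures in which no $\infty - \infty$ cancellation ever occurs, since the possibly infinite diagonal terms of the untruncated $\mathcal{H}$ are never added and subtracted --- then show $\Lambda_N \cvweak{N\to\infty} \nu\otimes\nu$ and invoke lower semicontinuity of $\lambda \mapsto \int H\dd\lambda$ on nonnegative finite measures. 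The two devices are interchangeable instances of the same monotone-approximation idea (your bounded continuous $H_k \uparrow H$ plays the role of the paper's $H\wedge M$), but they distribute the work differently: the paper stays within the l.s.c.\ statement already recorded for probability measures at the cost of introducing the auxiliary truncated functionals $\mathcal{J}^M$ and $\mathcal{J}^M_{\Omega,N}$, whereas your route avoids any truncated functional but requires extending the l.s.c.\ lemma to sub-probability measures, which your sketch (Lipschitz approximations $H_k \uparrow H$, available on any metric space, plus monotone convergence) correctly supplies. The remaining ingredients --- independence to factor $\mathbb{E}\bigl[\nu_{i,N}^{X_i}\otimes\nu_{j,N}^{X_j}\bigr] = \bar\nu_{i,N}\otimes\bar\nu_{j,N}$ for $i\neq j$, Tonelli for the nonnegative kernel, the constant sequence for the $+\infty$ case of the $\limsup$, and a.s.\ uniqueness of narrow limits of random measures to identify $\gammab = \gamma$ --- are all sound and match, or harmlessly make explicit, steps the paper leaves implicit.
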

\begin{proof}
    Starting with $\Gamma$-$\liminf$, consider a sequence ${\left(\gammab_N\right)}_{N \in \mathbb{N}}$ converging to $\gammab$ in the narrow topology of random measures. From Lemma~\ref{lemma:convergente_random_to_deterministic}, it follows that $\gammab$ is actually a non-random measure $\gamma \in \mathscr{P}_{\mu}(\X\times\Y)$. Without loss of generality we assume that
    \[
        \liminf_{N \to \infty} \mathcal{J}_{\Omega,N}(\gammab_N) < \infty,    
    \]
    otherwise there is nothing to prove. Then, up to taking a subsequence attaining the $\liminf$, one can assume that $\mathcal{J}_{\Omega,N}(\gammab_N) \le C$ for some constant $C > 0$ and all $N \in \mathbb{N}$, so in particular $\gammab_N \in \mathscr{P}_{\Omega, \mu_N}(\X\times\Y)$. 

    For an arbitrary $M>0$, define the truncated interaction energy as
    \[
        \mathcal{H}^M(\nu,\nu) \eqdef \int_{\Y\times\Y} H^M \dd \nu\otimes \nu, 
        \text{ where }
        H^M \eqdef H\wedge M,
    \]
    and the truncated total energies $\mathcal{J}^M$ and $\mathcal{J}_{\Omega,N}^M$ as in~\eqref{eq:energy_lifted} and~\eqref{eq:potential_function_openloop} by replacing $\mathcal{H}$ with $\mathcal{H}^M$. Then it follows from Fubini's Theorem that 
    \begin{align*}
        \mathcal{J}_{\Omega,N}^M(\gammab_N) 
        &=
        \mathbb{E}
        \left[
            \frac{1}{N}\sum_{i = 1}^N
            \int_\Y c(X_i, y) + L(y) \dd \nu_N^{X_i}
            +
            \frac{1}{N^2}\sum_{i \neq j}
            \mathcal{H}^M(\nu_N^{X_i}, \nu_N^{X_j})
        \right] \displaybreak[0] \\ 
        &= 
        \mathbb{E}
        \left[
            \int_{\X \times \Y} c + L \dd \gammab_N
            +
            \int_{\Y \times \Y} 
            H^M \dd \gammab_N\otimes \gammab_N
        \right] \\
        & \quad \quad \quad \quad \quad
        - 
        \frac{1}{N^2}
        \sum_{i = 1}^{N}
        \mathbb{E}
        \left[
            \mathcal{H}^M\left(\nu_N^{X_i}, \nu_N^{X_i}\right)
        \right]
         \displaybreak[0] \\ 
        &=
        \mathcal{J}^M\left(\mathbb{E}\gammab_N\right)
        - 
        \frac{1}{N^2}\sum_{i = 1}^N
        \underbrace{
            \mathbb{E}\left[
            \mathcal{H}^M(\nu^{X_i}_N,\nu^{X_i}_N)
        \right]
        }_{
            \le M
        }
        \ge 
        \mathcal{J}^M\left(\mathbb{E}\gammab_N\right)
        - 
        \frac{M}{N},
    \end{align*}
    where the last inequality was obtained from the fact that $\mathcal{H}^M$ is bounded by $M$. For any fixed $M>0$, the sum on the right-hand side above vanishes as $N \to \infty$ and hence since $\mathbb{E}\gammab_N \cvweak{N \to \infty}\gamma$, the lower semi-continuity of $\mathcal{J}^M$, as a consequence for instance of Lemma~\ref{lemma.lsc_narrow}, gives that
    \begin{align*}
        \liminf_{N \to \infty}  
        \mathcal{J}_{\Omega,N}(\gammab_N) 
        \ge
        \liminf_{N \to \infty}  
        \mathcal{J}^M\left(\mathbb{E}\gammab_N\right)
        \ge 
        \mathcal{J}^M(\gamma).
    \end{align*}
    Noticing that from the monotone convergence theorem $\mathcal{H}(\nu, \nu) = \displaystyle \sup_{M>0}\mathcal{H}^M(\nu, \nu)$, we get
    \[
        \liminf_{N \to \infty}  
        \mathcal{J}_{\Omega,N}(\gammab_N) 
        \ge 
        \sup_{M>0}
        \mathcal{J}^M(\gamma)
        = 
        \mathcal{J}(\gamma),
    \]
    and the result follows. 
    
    To prove the $\Gamma$-limsup, it suffices to construct recovery sequences only for non-random transportation plans $\gamma \in \mathscr{P}_{\mu}(\X\times\Y)$. For any such measure, consider its disintegration representation as $\gamma = \mu\otimes \nu^x$, and define a recovery sequence as
    \[
        \gammab_N \eqdef \mub_N\otimes \nu^x.
    \]  
Therefore it follows directly from Lemma~\ref{lemma:convergente_random_to_deterministic} that $\gammab_N \cvweak{N \to \infty} \gamma$, since by construction we have that $\mathbb{E}\gammab_N = \gamma$ for all $N \in \mathbb{N}$. 
    
    Next, for each $N \in \mathbb{N}$, a simple computation yields 
    \begin{align*}
        \mathcal{J}_{\Omega,N}(\gammab_N) 
        & =
        \mathbb{E}\Biggl[
            \frac{1}{N}\sum_{i = 1}^N
            \left(
                \int_\Y c(X_i,y) \dd \nu^{X_i}(y)
                + 
                \int_\Y L(y) \dd \nu^{X_i}(y) 
            \right) \\
        & \hphantom{= \mathbb E \Biggl[} 
            + 
            \frac{1}{N^2}\sum_{i\neq j} \int_{\Y\times\Y} H \dd\nu^{X_i}\otimes \nu^{X_j}        
        \Biggr] \displaybreak[0]\\
        & =
        \frac{1}{N}\sum_{i = 1}^N
        \int_{\X}
        \left(
            \int_\Y c(x_i,y) \dd \nu^{x_i}(y)
            + 
            \int_\Y L(y) \dd \nu^{x_i}(y) 
        \right)\dd \mu(x_i)\\
        & \hphantom{= {}} +
        \frac{1}{N^2}\sum_{i\neq j} 
        \int_{\X\times \X}
        \left(
            \int_{\Y\times\Y} H \dd\nu^{x_i}\otimes \nu^{x_j}
        \right)\dd \mu\otimes\mu(x_i,x_j)  \displaybreak[0]
        \\ 
        & =
        \frac{1}{N}\sum_{i = 1}^N
        \int_{\X\times \Y} 
        (c + L) \dd \gamma 
        + 
        \frac{1}{N^2}\sum_{i\neq j} 
        \int_{\Y\times\Y} H \dd \nu \otimes \nu  \displaybreak[0] \\
        & = 
        \mathcal{J}(\gamma) - \frac{1}{N}\int_{\Y\times \Y}H \dd \nu\otimes\nu \le \mathcal{J}(\gamma).
    \end{align*}
    Taking the $\limsup$ as $N \to \infty$, the result follows. 
\end{proof} 

Now we use the properties of $\Gamma$-convergence along with the random Prokhorov's theorem, Theorem~\ref{thm:random_Prokhorov}, to show that cluster points of equilibria for the $N$-player game are Cournot--Nash equilibria in the sense of Definition~\ref{def:equ_CournotNash}.

\begin{theorem}\label{theorem:nash2cournotnash_openloop}
    Under the same assumptions of Theorem~\ref{thm:minimizer_is_equilibria}, if ${\left(\gammab_N\right)}_{N \in \mathbb{N}}$ is a sequence of minimizers of $\mathcal{J}_{\Omega,N}$, then there exists a subsequence ${\left(\gammab_{N_k}\right)}_{k \in \mathbb{N}}$ such that
    \[
        \gammab_{N_k} \cvweak{N_k \to \infty} \gamma \in \mathscr{P}_\mu(\X\times\Y),
    \]
    in the narrow topology of $\mathscr{P}_\Omega(\X\times\Y)$, and in addition $\gamma$ is a Cournot--Nash equilibrium in the sense of Definition~\ref{def:equ_CournotNash}. 

    Assuming in addition that $H \in \mathscr{C}_b(\Y\times \Y)$, for any sequence of Nash equilibria ${\left(\gammab_N\right)}_{N \in \mathbb{N}}$ for the game~\eqref{eq:game_ol_mix}, that is 
    \begin{equation}\label{eq:nash_random_plan}
        \gammab_N 
        \eqdef 
        \frac{1}{N}\sum_{i = 1}^N \delta_{X_i}\otimes \nu_{i,N}^{x} \in 
        \mathscr{P}_{\Omega, \mub_N}(\X\times\Y),
    \end{equation}
    converging to $\gamma$ in the narrow topology of $\mathscr{P}_\Omega(\X\times\Y)$, it holds that 
    $\gamma \in \mathscr{P}_\mu(\X\times\Y)$, and it is a Cournot--Nash equilibrium in the sense of Definition~\ref{def:equ_CournotNash}.
\end{theorem}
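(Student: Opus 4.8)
The plan is to feed the $\Gamma$-convergence of Theorem~\ref{theorem:gamma_conv_openloop} into its fundamental consequence for minimizers. Since $\inf_{\mathscr{P}_\mu(\X\times\Y)}\mathcal{J}<\infty$, I fix $\gamma_\star$ with $\mathcal{J}(\gamma_\star)<\infty$ and take the recovery sequence $\gammab_N^\star=\mub_N\otimes\nu_\star^x$ built in the proof of Theorem~\ref{theorem:gamma_conv_openloop}; minimality of $\gammab_N$ together with the $\Gamma\text{-}\limsup$ bound gives $\sup_N\mathcal{J}_{\Omega,N}(\gammab_N)\le\sup_N\mathcal{J}_{\Omega,N}(\gammab_N^\star)<\infty$. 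Because $c,H\ge0$ by~\ref{Hypo-LH}, this uniform bound controls $\mathbb{E}\int_\Y L\,\dd\nu_N$, where $\nu_N$ is the second marginal of $\gammab_N$; the compact sub-level sets of $L$ from~\ref{Hypo-compact}, together with the fixed deterministic first marginal $\mu$, then make the family $\{\gammab_N\}$ tight in the sense of the random Prokhorov theorem (Theorem~\ref{thm:random_Prokhorov}). Extracting a narrowly convergent subsequence $\gammab_{N_k}\to\gammab$, Lemma~\ref{lemma:convergente_random_to_deterministic} identifies the limit with a deterministic $\gamma\in\mathscr{P}_\mu(\X\times\Y)$. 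The fundamental property of $\Gamma$-convergence recalled in Section~\ref{subsec:gamma_convergence} shows that $\gamma$ minimizes $\mathcal{J}_\Omega$, hence $\mathcal{J}$; Theorem~\ref{thm:potential_structure_cournot_nash}(ii) (under the structural condition~\eqref{Hypo-technical}) upgrades ``minimizer of $\mathcal{J}$'' to ``Cournot--Nash equilibrium'', closing this part.

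\textbf{Part 2 (arbitrary Nash equilibria, $H$ bounded continuous).} Here there is no variational selection principle, so I would instead transport the equilibrium condition itself to the limit. By Lemma~\ref{lemma:convergente_random_to_deterministic} the limit $\gamma$ is again a deterministic element of $\mathscr{P}_\mu(\X\times\Y)$, with $\gammab_N\to\gamma$ and $\mub_N\to\mu$ narrowly $\mathbb{P}$-a.s.; write $\nu=(\pi_\Y)_\sharp\gamma$ and let $\nu_N$ be the random second marginal of $\gammab_N$. Linearity of player $i$'s mixed cost in its own variable means the Nash condition forces $\nu_{i,N}^{X_i}$ to sit on the argmin of the effective cost $e_i(y)=\Phi(X_i,y,\nu_N)-\tfrac{2}{N}\int_\Y H(y,\cdot)\,\dd\nu_{i,N}^{X_i}$; since $H\le M$, one has $|e_i-\Phi(X_i,\cdot,\nu_N)|\le 2M/N$, and after averaging over $i$ and taking expectations this yields the single scalar inequality
\[
\mathbb{E}\int_{\X\times\Y}\Phi(x,y,\nu_N)\,\dd\gammab_N
\le
\mathbb{E}\int_\X\phi_N\,\dd\mub_N+\frac{4M}{N},
\qquad \phi_N(x)\eqdef\inf_{y'\in\Y}\Phi(x,y',\nu_N).
\]
Recalling from the proof of Theorem~\ref{thm:potential_structure_cournot_nash} that $\gamma$ is a Cournot--Nash equilibrium exactly when $\int\Phi(x,y,\nu)\,\dd\gamma=\int\phi\,\dd\mu$ with $\phi(x)=\inf_{y'}\Phi(x,y',\nu)$, the inequality ``$\ge$'' being automatic, it remains to pass both sides of the displayed inequality to the limit.

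\textbf{Passing to the limit.} The decisive device is to work throughout in expectation and exploit the independence of the $X_j$: in every interaction term the diagonal $i=j$ contributes at most $\mathcal{O}(M/N)$ by boundedness of $H$ and is discarded, while each off-diagonal $i\neq j$ term decouples and replaces the random $\nu_N$ by the deterministic expectation measure $\mathbb{E}\nu_N=(\pi_\Y)_\sharp\mathbb{E}\gammab_N$, which converges narrowly to $\nu$. For the left-hand side this gives $\mathbb{E}\,2\mathcal{H}(\nu_N,\nu_N)=2\mathcal{H}(\mathbb{E}\nu_N,\mathbb{E}\nu_N)+\mathcal{O}(M/N)\to 2\mathcal{H}(\nu,\nu)$ by the narrow continuity of $\mathcal{H}$ for bounded continuous $H$ (Lemma~\ref{lemma.lsc_narrow}); combined with lower semicontinuity of the $(c+L)$-part it produces $\int\Phi(x,y,\nu)\,\dd\gamma\le\liminf_N\mathbb{E}\int\Phi(x,y,\nu_N)\,\dd\gammab_N$. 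For the right-hand side I bound $\phi_N(x)\le\Phi(x,y_x,\nu_N)$ along an $\varepsilon$-optimal measurable selection $x\mapsto y_x$ for the limit problem and run the same diagonal/off-diagonal splitting, obtaining $\limsup_N\mathbb{E}\int\phi_N\,\dd\mub_N\le\int\Phi(x,y_x,\nu)\,\dd\mu\le\int\phi\,\dd\mu+\varepsilon$. Chaining the two estimates through the displayed inequality and letting $\varepsilon\downarrow0$ gives $\int\Phi(x,y,\nu)\,\dd\gamma\le\int\phi\,\dd\mu$, hence equality and the equilibrium condition.

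\textbf{Main obstacle.} The hard point is precisely this last passage: the interaction measure $\nu_N$ appearing inside both the cost and the infimum is \emph{random and $N$-dependent}, and a naive almost-sure argument would demand the narrow convergence $\nu_N\to\nu$ tested against the $y$-varying family $\{H(y,\cdot)\}$ \emph{uniformly} in $y$, which bounded continuity of $H$ alone does not supply. Taking expectations first is what circumvents this: independence collapses the off-diagonal interaction onto the deterministic $\mathbb{E}\nu_N$, leaving only convergence against fixed test functions plus a dominated-convergence step in $x$, with boundedness of $H$ used again to kill the diagonal corrections and to furnish the uniform integrability needed to exchange limit and expectation. The remaining soft point is the integrability of the selection cost $x\mapsto c(x,y_x)+L(y_x)$, which is where a finite-social-cost property of the limit must be invoked so that $\int\phi\,\dd\mu<\infty$ and the integral identity genuinely encodes the pointwise equilibrium condition.
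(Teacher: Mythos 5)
Your Part~1 is essentially the paper's own proof: a uniform bound on $\mathcal{J}_{\Omega,N}(\gammab_N)$ (the paper extracts it from the convergence of infima under $\Gamma$-convergence, you from a fixed recovery sequence --- equivalent), tightness of the second marginals via Markov's inequality and the compact sub-level sets of $L$ from~\ref{Hypo-compact}, the random Prokhorov theorem (Theorem~\ref{thm:random_Prokhorov}), Lemma~\ref{lemma:convergente_random_to_deterministic} to identify the limit with a deterministic element of $\mathscr{P}_\mu(\X\times\Y)$, the fundamental property of $\Gamma$-convergence, and finally Theorem~\ref{thm:potential_structure_cournot_nash}(ii). You are right --- and in fact more explicit than the statement of the theorem --- that this last step requires the structural condition~\eqref{Hypo-technical}.

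In Part~2 you genuinely diverge from the paper. The paper proves that $\gamma$ is a \emph{critical point} of $\mathcal{J}$: for an arbitrary competitor $\bar\gamma=\mu\otimes\bar\nu^x$ it tests the Nash condition against the deviations $\bar\nu^{X_i}$, averages over $i$, uses independence to replace the random interactions by expectation measures up to an $O(M/N)$ diagonal error, passes to the limit with lower semi-continuity of the $c+L$ part and narrow continuity of $\mathcal{H}$ for bounded continuous $H$, and then invokes item~(i) of Theorem~\ref{thm:potential_structure_cournot_nash} to convert criticality into the equilibrium condition. You instead pass the scalar identity $\int\Phi(x,y,\nu)\dd\gamma=\int\phi\dd\mu$ directly to the limit, testing the Nash condition against measurable $\varepsilon$-argmin selections rather than arbitrary competitors; this effectively inlines the proof of Theorem~\ref{thm:potential_structure_cournot_nash}(i) into the limit passage and dispenses with the first-variation formalism, at the cost of needing the selection machinery inside the limit argument (available by the Brown--Purves argument the paper already uses) and the integrability $\int\phi\dd\mu<\infty$, which does follow from the standing assumption $\inf\mathcal{J}<\infty$ together with $H\le M$ by disintegrating any finite-energy plan. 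The probabilistic core --- expectations first, independence collapsing the off-diagonal interaction onto $\mathbb{E}\nu_N$, $O(M/N)$ diagonal corrections --- is identical in both proofs, so each route buys the same thing by a different bookkeeping: the paper's linear tests keep the limit argument soft and delegate the equilibrium upgrade to the characterization theorem, while yours is more self-contained and makes visible exactly where finiteness enters.

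One imprecision you should repair, though it is not a gap. In the open-loop game a deviation of player $i$ must be measurable with respect to $X_i$ alone, so the Nash condition does \emph{not} force $\nu_{i,N}^{X_i}$ to concentrate on the argmin of your $e_i$, which is random through $X_{-i}$; it forces concentration (or $\varepsilon$-optimality) for the conditional effective cost
\[
    y \mapsto c(x,y)+L(y)+\frac{2}{N}\sum_{j\neq i}\int_\Y H(y,y')\dd\bigl(\mathbb{E}\nu_{j,N}^{X_j}\bigr)(y').
\]
Correspondingly, your displayed inequality should be stated with the deterministic $\mathbb{E}\nu_N$ in place of the random $\nu_N$: as written, with $\mathbb{E}\int\phi_N\dd\mub_N$ on the right and $\phi_N$ built from the random $\nu_N$, the inequality is not justified by Nash optimality, since $\mathbb{E}[\inf_y(\cdot)]\le\inf_y\mathbb{E}[\cdot]$ goes the wrong way and the pathwise benchmark $\phi_N$ lets $y$ adapt to $X_{-i}$, which an open-loop player cannot do. Your own ``take expectations first'' device supplies the fix: derive the inequality by comparing to a deterministic $\varepsilon$-argmin selection, after which every estimate in your limit passage applies verbatim to the deterministic version and the conclusion stands.
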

\begin{proof}
   To prove the first assertion, we know from the properties of $\Gamma$-convergence that 
    \begin{equation}\label{eq:convergence_infs}
        \inf_{\mathscr{P}_{\Omega,N}(\X\times\Y)} 
        \mathcal{J}_{\Omega,N} 
        \xrightarrow[N \to \infty]{} \inf_{\mathscr{P}_{\mu}(\X\times\Y)}\mathcal{J} \eqdef C < + \infty.
    \end{equation}
    
    Note that, since for each $N\in\mathbb{N}$ the functional $\mathcal{J}_{\Omega,N}$ is l.s.c.~with compact sub-level sets, a sequence $(\gammab_N)_{N \in \mathbb N}$ of minimizers of $\mathcal J_{\Omega, N}$ does exist. By Theorem~\ref{theorem:gamma_conv_openloop}, if such a sequence has a cluster point, then the latter must minimize $\mathcal{J}$. Hence, to finish the proof of the first part of the statement, it suffices to obtain such a cluster point. This will be done with the version of Prokhorov's theorem for random measures, Theorem~\ref{thm:random_Prokhorov}, which states that a sequence of random measures is sequentially compact in the narrow topology if and only if it is tight.

    As $\mub_N \xrightharpoonup[N \to \infty]{}\mu$ in the narrow topology of random measures, it is a tight family, from the Prokhorov's theorem, so for any $\varepsilon > 0$ there exists a compact set $K_{\X,\varepsilon} \subset \X$ such that 
    \[
        \mathbb{E}\left[
            \mub_N(\X \setminus K_{\X,\varepsilon})
        \right]    
        < \frac{\varepsilon}{2}.
    \]

    From~\eqref{eq:convergence_infs} we get that, for $N$ large enough, 
    \[
        \mathbb{E}\left[
            \int_\Y L \dd \nub_N
        \right]
        \le 
        2 C,
    \]
    so that, for some $\varepsilon> 0$ we obtain from Markov's inequality that 
    \[
        \mathbb{E}
        \left[
            \nub_N\left(
                \left\{
                    L \le \frac{4C}{\varepsilon}
                \right\}
            \right)
        \right]    
        \le 
        \frac{\mathbb{E}\left[
            \displaystyle
            \int_\Y L \dd \nub_N
        \right]}{4C/\varepsilon}
        \le \frac{\varepsilon}{2}.
    \]
    Since $L$ has compact level sets, we set $K_{\Y,\varepsilon} = \left\{ L \le 4C/\varepsilon\right\}$ and set $K_\varepsilon \eqdef K_{\X,\varepsilon}\times K_{\Y,\varepsilon}$, so that
    \begin{align*}
        \mathbb{E}\left[
            \gammab_N\left(
                \X\times\Y\setminus K_\varepsilon
            \right)
        \right]
        &\le
        \mathbb{E}\left[
            \gammab_N\left(
                (\X\setminus K_{\X,\varepsilon})\times\Y
            \right)
        \right]
        +
        \mathbb{E}\left[
            \gammab_N\left(
                \X\times(\Y\setminus K_{\Y,\varepsilon})
            \right)
        \right]\\ 
        &=
        \mathbb{E}\left[
            \mub_N\left(
                \X\setminus K_{\X,\varepsilon}
            \right)
        \right]
        +
        \mathbb{E}\left[
            \nub_N\left(
                \Y\setminus K_{\Y,\varepsilon}
            \right)
        \right]
        <\varepsilon.
    \end{align*}
    We conclude that the sequence of random measures $\left(\gammab_N\right)_{N \in \mathbb N}$ is tight, and hence admits a convergent subsequence in the narrow topology of $\mathscr{P}_\Omega(\X\times\Y)$. As discussed above, from Lemma~\ref{lemma:convergente_random_to_deterministic}, the limit of this subsequence belongs to $\mathscr{P}_{\mu}(\X\times\Y)$ and minimizes $\mathcal{J}$. From Theorem~\ref{thm:minimizer_is_equilibria}, $\gamma$ is a Cournot--Nash equilibrium in the sense of Definition~\ref{def:equ_CournotNash}.

    To prove the second assertion, let $\gammab_N$ be defined as in~\eqref{eq:nash_random_plan} and $\gamma$ a limit point, which belongs to $\mathscr{P}_\mu(\X\times\Y)$ from Lemma~\ref{lemma:convergente_random_to_deterministic}. Since in this case we have $H \in \mathscr{C}_b(\Y\times\Y)$, the second marginal $\nu = {(\pi_\Y)}_\sharp\gamma$ is of finite social cost, the functional $\mathcal{J}$ admits a first variation and our goal is to use the characterization obtained in Theorem~\ref{thm:potential_structure_cournot_nash} by verifying that $\gamma$ is a critical point of $\mathcal{J}$, \emph{i.e.}, for any $\bar \gamma \in \mathscr{P}_{\mu}(\X\times\Y)$ we verify that
    \[
        \inner{\frac{\delta \mathcal{J}}{\delta \gamma}, \bar \gamma - \gamma}
        =
        \int_{\X\times\Y}
        \left(
            c(x,y) + L(y)
            +
            2\int_\Y H(y,\bar y)\dd \nu(\bar y)
        \right)
        \dd (\bar \gamma - \gamma)(x,y)
        \ge 0.
    \] 
    From Theorem~\ref{thm:potential_structure_cournot_nash}, this will show that $\gamma$ is a Cournot--Nash equilibrium. 

    Fix some $\bar \gamma \in \mathscr{P}_{\mu}(\X\times\Y)$, and recall the recovery sequence obtained from the $\Gamma$-convergence proof; consider a disintegration family $\bar \gamma = \mu \otimes \bar \nu^x$ so that 
    \[
        \bar \gammab_N 
        \eqdef 
        \frac{1}{N}\sum_{i = 1}^N \bar \nu^{X_i} \cvweak{N \to \infty} \gamma. 
    \]
    We consider a unilateral deviation of player $i$ with the alternative strategy $\bar \nu^{X_i}$, to the profile $\left(\nu_{1,N}^{X_1}, \dots, \nu_{N,N}^{X_N}\right)$. Since the latter is a Nash equilibrium in mixed strategies, we get that 
    $
        \mathcal{J}_{\Omega,i}(\bar \nu^{X_i}, \nu_{-i,N}^{X_{-i}}) 
        \ge 
        \mathcal{J}_{\Omega,i}( \nu_{i,N}^{X_i}, \nu_{-i,N}^{X_{-i}})
    $, 
    for $\mathcal{J}_{\Omega,i}$ defined in~\eqref{eq:game_ol_mix}. This can be rewritten as 
    \begin{align*}
        &\mathbb{E}
        \left[
            \int_\Y c(X_i, y) + L(y) \dd \bar \nu^{X_i}
            + 
            \frac{2}{N} \sum_{j \neq i} 
            \int_{\Y\times\Y} H \dd \bar\nu^{X_i}\otimes \nu_{j,N}^{X_j}
        \right]\\ 
        &\quad \quad \ge 
        \mathbb{E}
        \left[
            \int_\Y c(X_i, y) + L(y) \dd \nu_{i,N}^{X_i}
            + 
            \frac{2}{N} \sum_{j \neq i} 
            \int_{\Y\times\Y} H \dd \nu_{i, N}^{X_i} \otimes \nu_{j,N}^{X_j}
        \right].
    \end{align*}
    Let us define the measures
    \[
        \gammab_{N,-i} \eqdef \frac{1}{N}\sum_{j\neq i} \delta_{X_j}\otimes \nu_{j,N}^{X_j}, 
        \text{ and }
        \nub_{N,-i} \eqdef {(\pi_\Y)}_\sharp\gammab_{N,-i},
    \]
    so that, evaluating the expectations using the definition of the expectation measure, we obtain 
    \begin{multline*}
        \int_{\X\times\Y} (c + L) \dd \bar \gamma 
        + 
        2
        \int_{\Y\times\Y} H \dd \bar\gamma \otimes \mathbb{E}\gammab_{N,-i} \\
        \ge 
        \mathbb{E}
        \left[
            \int_\Y c(X_i, y) + L(y) \dd \nu_{i,N}^{X_i}
        \right]
        +
        2 
        \int_{\Y\times\Y} H \dd \mathbb{E}[\nu_{i,N}^{X_i}] \otimes \mathbb{E}\gammab_{N,-i}. 
    \end{multline*}

    Rewriting $\gammab_{N,-i} = \gammab_N - \frac{1}{N}\delta_{X_i}\otimes\nub_{i,N}^{X_i}$ and averaging over all $i$, we get that
    \begin{multline*}
        \int_{\X\times\Y} (c + L) \dd \bar \gamma 
        + 
        2\left(1 - \frac{1}{N}\right)
        \int_{\Y\times\Y} H \dd \bar\gamma \otimes \mathbb{E}\gammab_{N}\\ 
        \ge \int_{\X\times\Y} (c + L) \dd \mathbb{E}\gammab_{N}
        + 
        2\int_{\Y\times\Y} H \dd \mathbb{E}\gammab_N\otimes\mathbb{E}\gammab_N
        -
        \frac{2}{N^2} \sum_{i = 1}^N \int_{\Y\times\Y} H \dd \mathbb{E}\nub_{i,N}^{X_i}\otimes \mathbb{E}\nub_{i,N}^{X_i}.
    \end{multline*}

    As $H \in \mathscr{C}_b$, the last term is a $O(1/N)$ and hence vanishes as $N \to \infty$. 
    In addition, since $\mathbb{E}\gammab_N \cvweak{N \to \infty} \gamma$, from the convergence of $\gammab_N$ and Lemma~\ref{lemma:convergente_random_to_deterministic}, we get that 
    \begin{align*}
        0 &\ge 
        \liminf_{N \to \infty}
        \int_{\X\times\Y} c + L \dd (\mathbb{E}\gammab_N - \bar\gamma)
        + 
        \frac{2(N-1)}{N}
        \int_{\Y\times\Y} H \dd \mathbb{E}\gammab_N \otimes (\mathbb{E}\gammab_N - \bar \gamma) \\ 
        &\ge 
        \liminf_{N \to \infty}
        \int_{\X\times\Y} c + L \dd (\gamma - \bar\gamma)
        + 
        2
        \int_{\Y\times\Y} H \dd \gamma\otimes (\gamma - \bar \gamma)
        =
        \inner{\frac{\delta \mathcal{J}}{\delta \gamma}, \gamma - \bar \gamma}.
    \end{align*}
    From Theorem~\ref{thm:potential_structure_cournot_nash}, $\gamma$ is a Cournot--Nash equilibrium.
\end{proof}

\subsection{Convergence in the closed-loop formulation}\label{sec:conv_closed_loop}

\subsubsection{Potential structure in closed loop}
We recall that, in the closed-loop formulation described in the introduction, each player has the knowledge of their own type as well as pf the remaining players'. As a result, given an event $\omega \in \Omega$ and the corresponding realization of the sample ${\left(x_i = X_i(\omega) \right)}_{i \in \mathbb{N}}$, the cost associated with player $i \in \{1,\dots, N\}$ is given by 
\begin{equation*}\label{eq:game_cl_mix}
    \tag{$NP_\omega$}
    \begin{aligned}
        J_{\omega,i}(\nu_i , \nu_{-i}) 
        &\eqdef
        \int_\Y c(x_i, y)\dd \nu_i
        +
        \int_\Y L \dd \nu_i 
        + 
        \frac{2}{N}\sum_{j \neq i}\int_{\Y \times \Y}H \dd \nu_i\otimes \nu_j \\ 
        &= 
        \int_\Y c(x_i, y)\dd \nu_i
        +
        \mathcal{L}(\nu_i) 
        + 
        \frac{2}{N}\sum_{j \neq i}\mathcal{H}(\nu_i, \nu_j).  
    \end{aligned}
\end{equation*}
Notice that we have written the relaxed formulation in mixed strategies, and a profile in pure strategies is just a tuple ${\left(\nu_i\right)}_{i = 1}^N$ such that $\nu_i = \delta_{y_i}$ for all players.

Under assumption~\ref{Hypo-atomless} that $\mu$ has no atoms, which is not restrictive thanks to Remark~\ref{remark.atomless_non_restrictive}, with full $\mathbb{P}$-probability, the event $\bigcup_{i\neq j}
\left\{
    X_i \neq X_j
\right\}$ has $\mathbb{P}$-probability $1$. Hence, for any $\omega$ in such event, setting $x_i = X_i(\omega)$, there is a bijection between the strategy profiles ${\left(\nu_i\right)}_{i = 1}^{N}$ and the measures $\gamma_N \in \mathscr{P}_{\mu_N(\omega)}(\X\times\Y)$ by means of the disintegration theorem, which guarantees that each such measure is uniquely written as
\begin{equation}\label{eq:disintegration_representation}
    \gamma_N 
    = 
    \frac{1}{N} \sum_{i = 1}^N \delta_{x_i}\otimes\nu_i
\end{equation}
This representation can be seen as a lift of a profile of strategies ${\left(\nu_i\right)}_{i = 1}^N$ to the space of plans $\mathscr{P}(\X \times \Y)$. We can define a potential function in the lifted space as
\begin{equation}\label{eq:potential_function_closedloop}
    \mathcal{J}_{\omega, N}(\gamma_N)
    \eqdef
    \begin{dcases}
        \begin{aligned}
            \int_{\X\times\Y} c\dd \gamma_N
            +
            \frac{1}{N}\sum_{i = 1}^N 
            \mathcal{L}\left(\nu_i\right)\\ {} + 
            \frac{1}{N^2}\sum_{j \neq i}
            \mathcal{H}\left(\nu_i, \nu_j\right),
        \end{aligned}
        &
        \text{if } \gamma_N \in\mathscr{P}_{\mu_N(\omega)}(\X\times\Y),\\ 
        +\infty,& \text{ otherwise},
    \end{dcases} 
\end{equation}
where ${\left(\nu_i\right)}_{i = 1}^N$ denotes the unique profile obtained though the representation~\eqref{eq:disintegration_representation}.

The formulation in pure strategies can then be obtained by considering the potential functional
\begin{equation}\label{eq:potential_function_closedloop_pure}
    J_{\omega,N}(y_1,\dots,y_N)
    \eqdef 
    \mathcal{J}_{\omega,N}
    \left(
        \frac{1}{N}\sum_{i = 1}^N \delta_{(x_i,y_i)}
    \right).
\end{equation}
This is equivalent to restricting $\mathcal{J}_{\omega,N}$ to the set
\[
    \mathscr{P}_{\mu_N}^{\text{pure}}(\X\times \Y)
    \eqdef 
    \left\{
        \gamma_N \in 
        \mathscr{P}_{\mu_N}(\X\times \Y) :
        \gamma_N = \frac{1}{N}\sum_{i = 1}^N\delta_{(x_i,y_i)}
    \right\}.  
\]

As in the open-loop case, the above potential functionals $J_{\omega,N}$ and $\mathcal{J}_{\omega,N}$  admit minimizers since $c$, $L$ and $H$ are l.s.c.\ and $L$ has compact sub-level sets. We shall prove that minimizers for each potential functional yield Nash equilibria for the corresponding game and, in the case that $H$ vanishes in the diagonal and is strictly positive elsewhere, we can prove that any minimizer induces a Nash equilibrium in pure strategies. 
\begin{proposition}\label{proposition:NP-potential_closed_loop}
    The following assertions hold:
    \begin{enumerate}
        \item[(i)] For $\mathbb P$-almost every $\omega \in \Omega$, it is equivalent to minimize $J_{\omega, N}$ and $\mathcal{J}_{\omega, N}$, minimizers of the latter are supported on the set of minimizers of the former, and it holds that
        \begin{equation}\label{eq:equivalences_min_potenfunctions}
            \min_{\Y^{\otimes N}} J_{\omega, N} 
            = 
            \min_{\mathscr{P}_{\mu_N}^{\text{pure}}(\X\times \Y)} \mathcal{J}_{\omega, N}
            =
            \min_{\mathscr{P}_{\mu_N}(\X\times \Y)} \mathcal{J}_{\omega, N}.   
        \end{equation}
        \item[(ii)] Let 
        \[
            \begin{aligned}
                {(y_i)}_{i = 1}^N \in \argmin J_{\omega,N}&, \quad 
                \gamma_N = \frac{1}{N}\sum_{i = 1}^N \delta_{x_i}\otimes\nu_{\omega,i} \in \argmin \mathcal{J}_{\omega,N},
            \end{aligned}
        \]
        then ${(y_i)}_{i = 1}^N$ and ${\left(\nu_{\omega,i}\right)}_{i= 1}^N$ induce Nash equilibria for the game~\eqref{eq:game_cl_mix}.
    \end{enumerate}
\end{proposition}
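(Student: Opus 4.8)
The plan is to derive both parts from a single structural fact: $\mathcal{J}_{\omega,N}$ is an \emph{exact potential} for the game~\eqref{eq:game_cl_mix} and is \emph{affine} in each player's strategy. Throughout I work on the full-probability event on which the types $x_i=X_i(\omega)$ are pairwise distinct; by Hypothesis~\ref{Hypo-atomless} this event has $\mathbb{P}$-probability one, and on it~\eqref{eq:disintegration_representation} is a bijection between profiles $(\nu_i)_{i=1}^N$ and plans in $\mathscr{P}_{\mu_N(\omega)}(\X\times\Y)$, so that $\mathcal{J}_{\omega,N}$ is well defined.

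First I would record the potential identity. For fixed $\nu_{-i}$, set $\psi_i(y)\eqdef c(x_i,y)+L(y)+\frac{2}{N}\sum_{j\neq i}\int_\Y H(y,y')\dd\nu_j(y')$. Using the symmetry of $H$ to merge the two halves of the double sum — exactly the splitting carried out in the proof of Proposition~\ref{proposition:NP-potential_open} — the only terms of $\mathcal{J}_{\omega,N}$ that depend on $\nu_i$ are
\[
\frac{1}{N}\Bigl(\int_\Y c(x_i,y)\dd\nu_i+\mathcal{L}(\nu_i)+\frac{2}{N}\sum_{j\neq i}\mathcal{H}(\nu_i,\nu_j)\Bigr)=\frac{1}{N}\,J_{\omega,i}(\nu_i,\nu_{-i})=\frac{1}{N}\int_\Y\psi_i\dd\nu_i.
\]
Hence any unilateral deviation changes $\mathcal{J}_{\omega,N}$ by exactly $\tfrac1N$ times the corresponding change of $J_{\omega,i}$, which proves (ii) immediately: if $\gamma_N=\frac1N\sum_i\delta_{x_i}\otimes\nu_{\omega,i}$ minimizes $\mathcal{J}_{\omega,N}$, then $J_{\omega,i}(\nu_{\omega,i},\nu_{-i})\le J_{\omega,i}(\bar\nu_i,\nu_{-i})$ for every competitor $\bar\nu_i$, i.e.\ the no-deviation condition of Definition~\ref{def.Nashequi}; applying the same identity through~\eqref{eq:potential_function_closedloop_pure} treats the pure minimizer $(y_i)_{i=1}^N$.

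For (i), the equality $\min_{\Y^{\otimes N}}J_{\omega,N}=\min_{\mathscr{P}_{\mu_N}^{\text{pure}}(\X\times\Y)}\mathcal{J}_{\omega,N}$ is immediate from~\eqref{eq:potential_function_closedloop_pure} and the bijection above, while $\min_{\mathscr{P}_{\mu_N}^{\text{pure}}(\X\times\Y)}\mathcal{J}_{\omega,N}\ge\min_{\mathscr{P}_{\mu_N}(\X\times\Y)}\mathcal{J}_{\omega,N}$ follows from the inclusion $\mathscr{P}_{\mu_N}^{\text{pure}}(\X\times\Y)\subset\mathscr{P}_{\mu_N}(\X\times\Y)$. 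The substance is the reverse inequality, which I would obtain by \emph{purification}. Since $\nu_i\mapsto\mathcal{J}_{\omega,N}$ is affine with integrand $\psi_i$, and a linear functional on $\mathscr{P}(\Y)$ is minimized at an extreme point (a Dirac mass), I would start from a mixed minimizer $\gamma_N$ — assuming $\min\mathcal{J}_{\omega,N}<+\infty$, as otherwise all three values are $+\infty$ and there is nothing to prove — and replace its disintegration measures by Diracs one slot at a time. Optimality in the $i$th slot gives $\int_\Y\psi_i\dd\nu_i=\inf_\Y\psi_i$, hence $\supp\nu_i\subset\argmin\psi_i$, so substituting $\delta_{y_i}$ for any $y_i\in\supp\nu_i$ leaves the value unchanged and keeps the profile globally optimal; iterating over $i=1,\dots,N$ yields a pure profile of equal value. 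This proves the reverse inequality and, simultaneously, the support property: every Dirac selection from the supports of a mixed minimizer minimizes $J_{\omega,N}$.

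The main obstacle is to make the purification rigorous, namely to ensure each $\psi_i$ attains its infimum so that the Dirac replacement is legitimate. I would check that $\psi_i$ is lower semicontinuous with compact sub-level sets: $c(x_i,\cdot)$ and $L$ are l.s.c.\ by Hypothesis~\ref{Hypo-LH}, and $y\mapsto\int_\Y H(y,y')\dd\nu_j(y')$ is l.s.c.\ by Fatou's lemma together with the lower semicontinuity and nonnegativity of $H(\cdot,y')$; since $c,H\ge0$ one has $\psi_i\ge L$, so $\{\psi_i\le\kappa\}$ lies in a sub-level set of $L$, compact by Hypothesis~\ref{Hypo-compact}. Finiteness of the minimizer's value then forces $\inf_\Y\psi_i<+\infty$, so the minimum is attained; moreover $\{\psi_i>\inf_\Y\psi_i\}$ is open and $\nu_i$-null, giving $\supp\nu_i\subset\argmin\psi_i$ as used above.
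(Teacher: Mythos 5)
Your proof is correct, and it splits into a same-route part and a different-route part. Part (ii) is essentially the paper's argument: the paper just declares (ii) analogous to Proposition~\ref{proposition:NP-potential_open}, i.e.\ the unilateral-deviation computation using the symmetry of $H$, which is exactly what your exact-potential identity $\mathcal{J}_{\omega,N}(\text{deviation}) - \mathcal{J}_{\omega,N}(\text{profile}) = \frac1N\bigl(J_{\omega,i}(\bar\nu_i,\nu_{-i}) - J_{\omega,i}(\nu_i,\nu_{-i})\bigr)$ encodes. For part (i), however, you take a genuinely different route. The paper's proof rests on the single Tonelli-type identity
\[
\mathcal{J}_{\omega,N}(\gamma_N)
=
\int_{\Y^{\otimes N}} J_{\omega,N}(y_1,\dots,y_N)\dd\nu_1\otimes\cdots\otimes\nu_N,
\]
valid since all integrands are nonnegative: because $\mathcal{J}_{\omega,N}(\gamma_N)$ is then an average of values of $J_{\omega,N}$, the inequality $\min_{\mathscr{P}_{\mu_N}(\X\times\Y)}\mathcal{J}_{\omega,N}\ge\min_{\Y^{\otimes N}}J_{\omega,N}$ is immediate, equality follows by choosing $\gamma_N$ with disintegration supported on $\argmin J_{\omega,N}$, and the support property comes for free (equality of the integral with the minimum forces $J_{\omega,N}=\min J_{\omega,N}$ a.e.\ with respect to $\nu_1\otimes\cdots\otimes\nu_N$). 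Your slot-by-slot purification replaces this one-line global averaging by $N$ applications of the affine structure in each coordinate; it is longer and requires the bookkeeping you correctly supply (lower semicontinuity of $\psi_i$ via Fatou, $\supp\nu_i\subset\argmin\psi_i$ from openness of $\{\psi_i>\inf_\Y\psi_i\}$, preservation of global optimality after each Dirac substitution), but it buys something: it makes the potential identity do double duty for (ii), and it establishes constructively that \emph{every} selection $y_i\in\supp\nu_i$ gives a minimizer of $J_{\omega,N}$, which is precisely the stated support property. Two minor remarks: your purification presupposes that a mixed minimizer exists, which the paper secures by the direct method (l.s.c.\ costs, compact sub-level sets of $L$) in the sentence preceding the proposition, so you should invoke that rather than only check compactness of the sub-level sets of $\psi_i$; and your treatment of the degenerate case is fine, since $\mathscr{P}_{\mu_N}^{\text{pure}}(\X\times\Y)\subset\mathscr{P}_{\mu_N}(\X\times\Y)$ makes all three values in \eqref{eq:equivalences_min_potenfunctions} equal to $+\infty$ simultaneously.
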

\begin{proof}
    The first equality in~\eqref{eq:equivalences_min_potenfunctions} comes from the bijection between the set of pure equilibrium measures and $\Y^{\otimes N}$.    
    The second is a direct consequence of the fact that the measures $\gamma_N$ in the domain of $\mathcal{J}_{\omega,N}$ can be written as
    \[
        \gamma_N = \frac{1}{N}\sum_{i = 1}^N\delta_{x_i}\otimes\nu_i,  
    \]
    so that we can write
    \[
        \mathcal{J}_{\omega, N}(\gamma_N) = \int_{\Y^{\otimes N}} J_{\omega, N}(y_1, \dots, y_N)\dd  \nu_1\otimes \dots\otimes \nu_N. 
    \]
    Then, for any admissible $\gamma_N$, we have 
    \[
        \min_{\mathscr{P}_{\mu_N(\omega)}(\X\times\Y)} 
        \mathcal{J}_{\omega,N}(\gamma) \ge \min_{\Y^{\otimes N}} J_{\omega,N}.     
    \]
    Taking $\gamma_N$ with second marginal supported on the set of minimizers of $J_{\omega,N}$ gives the result. The proof of assertion (ii) is analogous to the proof of Proposition~\ref{proposition:NP-potential_open}.
\end{proof}

\begin{remark}
     If $H$ vanishes only in the diagonal and we allow for self-interactions in our game, \emph{i.e.}\ we replace $\mathcal{L}$ with $\mathcal{L}_H(\gamma) = \mathcal{L}(\gamma) + \mathcal{H}(\gamma,\gamma)$, then minimizers of $\mathcal{J}_{\omega, N}$ are of the form 
         \begin{equation}\label{eq:minimizers_atomic}
             \gamma_N = \frac{1}{N}\sum_{i = 1}^N\delta_{(x_i,y_i)}, \text{ where } (y_1,\dots, y_N) \in \argmin J_{\omega, N}.
         \end{equation}
    Indeed, since $H \ge 0$, we have that
    \begin{align*}
        \inf \mathcal{J}_{\omega, N}
        \ge 
        \inf J_{\omega, N} + 
        \frac{1}{N^2}\sum_{i = 1}^{N} \mathcal{H}(\nu_{\omega,i},\nu_{\omega,i})
        \ge  
        \inf \mathcal{J}_{\omega, N},
    \end{align*}
    which means that $\mathcal{H}(\nu_{\omega,i},\nu_{\omega,i}) = 0$ for all $i = 1,\dots, N$. Since $H$ only vanishes in the diagonal, it must hold that $\nu_{\omega,i} = \delta_{y_i}$, hence any minimizer of $\mathcal{J}$ is of the form of~\eqref{eq:minimizers_atomic}.

    With a dual reasoning, if $H = +\infty$ in the diagonal, any minimizer of $\mathcal{J}_{\omega, N}$ is atomless. Indeed, assume that $\gamma_N$ has an atom, \emph{i.e.}, there is a point where $\gamma_N(\{x_i,y_i\}) > 0$, and $H$ explodes in the diagonal, the self interaction term gives $\mathcal{J}_{\omega, N}(\gamma_N) = +\infty$ and it cannot be a minimizer. 
\end{remark}

\subsubsection{\texorpdfstring{$\Gamma$}{Gamma}-convergence for the closed-loop formulation}

Now we move on to the question of the convergence of a sequence of Nash equilibria for the games in closed loop~\eqref{eq:game_cl_mix}. In this case, for each event $\omega$ we have a sequence of games, therefore our result consists of a $\Gamma$-convergence with full $\mathbb{P}$-probability of the sequence of potential functionals $\left(\mathcal{J}_{\omega,N}\right)_{N \in \mathbb N}$. We cannot expect a better result, for instance a $\Gamma$-convergence for all events $\omega$, for example in the event of null probability $\left\{
    X_i = x \text{ for all } i \in \mathbb{N}
\right\}$ convergence does not follow.  

We start by showing a general lemma that gives a weaker criterion for $\Gamma$-convergence with full probability. 
\begin{lemma}\label{lemma.Gamma_conv_prob1}
    Let $(\Omega, \mathcal{F}, \mathbb{P})$ be a probability space, ${\left(\mathscr{F}_{\omega,N}\right)}_{\substack{
        N \in \mathbb{N}\\ 
        \omega \in \Omega
    }}$ be a family of functionals over a Polish space $\X$, and $\mathscr{F}$ be a functional over $\X$ such that
    \begin{enumerate}
        \item there is a set $\Omega_0$ with full $\mathbb{P}$-probability such that, for any $x_N \to x$, the $\Gamma$-$\liminf$ inequality for $\mathscr{F}_{\omega,N}$ holds, \emph{i.e.},
        \[
            \mathscr{F}(x) \le 
            \liminf_{N \to \infty} 
            \mathscr{F}_{\omega,N}(x_N), \text{ for all } \omega \in \Omega_0,
        \]
        \item for each $x\in \X$ there is a set $\Omega_x$ with full $\mathbb{P}$-probability for which we can construct recovery sequences of $\mathscr{F}_{\omega,N}$; in other words, for each $\omega \in \Omega_x$, we can construct a sequence ${\left(x_N\right)}_{N \in \mathbb{N}}$ such that $x_n \cvstrong{N \to \infty }{} x$ and
        \[ 
            \limsup_{N \to \infty} 
            \mathscr{F}_{\omega,N}(x_N)
            \le  \mathscr{F}(x).
        \]
    \end{enumerate}
    Then there is a set $\bar\Omega_0$ with full $\mathbb{P}$-probability such that, for any $\omega \in \bar\Omega_0$, the sequence $\mathscr{F}_{\omega,N}$ $\Gamma$-converges to $\mathscr{F}$.
\end{lemma}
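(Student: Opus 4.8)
The plan is to promote the two \emph{pointwise-in-$x$} full-probability statements of the hypotheses into a \emph{single} full-probability event on which $\mathscr{F}_{\omega,N}\cvstrong{N\to\infty}{\Gamma}\mathscr{F}$ holds for every $x$ at once. The $\Gamma$-$\liminf$ inequality is not the issue: hypothesis~(1) already provides it on one set $\Omega_0$, simultaneously for all sequences $x_N\to x$ and hence for all limit points $x$, so it is uniform in $x$ from the start. The genuine difficulty lies with the recovery ($\Gamma$-$\limsup$) inequality, which hypothesis~(2) only grants on a set $\Omega_x$ depending on $x$. Since $\X$ is uncountable, the naive intersection $\bigcap_{x\in\X}\Omega_x$ need neither have full probability nor be measurable, so the whole argument must be localized to a countable family of points — and that countable family must be chosen with care.

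First I would use separability to replace the uncountable intersection by a countable one. As $\X$ is Polish, $\X\times\mathbb{R}$ is separable, hence so is the subspace $\epi\mathscr{F}\subset\X\times\mathbb{R}$; let $\{(x^{(k)},t_k)\}_{k\in\mathbb{N}}$ be a countable dense subset of $\epi\mathscr{F}$ (so $t_k\ge\mathscr{F}(x^{(k)})$), and set $D\eqdef\{x^{(k)}:k\in\mathbb{N}\}$. I then define
\[
    \bar\Omega_0\eqdef\Omega_0\cap\bigcap_{k\in\mathbb{N}}\Omega_{x^{(k)}}.
\]
Being a countable intersection of full-probability events, $\bar\Omega_0$ has full probability. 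This is the probabilistic core of the proof: the uncountable family $(\Omega_x)_{x\in\X}$ is cut down to the countable subfamily indexed by $D$.

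Next I would fix $\omega\in\bar\Omega_0$ and check the two inequalities. The $\Gamma$-$\liminf$ inequality holds for every $x$ simply because $\omega\in\Omega_0$. For recovery at an arbitrary $x\in\X$, the case $\mathscr{F}(x)=+\infty$ is trivial (take $x_N\equiv x$), so assume $\mathscr{F}(x)<+\infty$. The reason for choosing $D$ \emph{inside the epigraph}, rather than merely dense in $\X$, is that this yields density in energy: since $(x,\mathscr{F}(x))\in\epi\mathscr{F}$, there is a subsequence $(x^{(k_j)})_j\subset D$ with $x^{(k_j)}\cvstrong{j\to\infty}{d_\X}x$ and $\mathscr{F}(x^{(k_j)})\le t_{k_j}\to\mathscr{F}(x)$, so that $\limsup_j\mathscr{F}(x^{(k_j)})\le\mathscr{F}(x)$. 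For each $j$, because $\omega\in\Omega_{x^{(k_j)}}$, hypothesis~(2) supplies a recovery sequence $(x^{(k_j)}_N)_N$ with $x^{(k_j)}_N\cvstrong{N\to\infty}{d_\X}x^{(k_j)}$ and $\limsup_N\mathscr{F}_{\omega,N}(x^{(k_j)}_N)\le\mathscr{F}(x^{(k_j)})$.

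Finally I would glue these by an Attouch-type diagonal extraction: picking a strictly increasing $(N_j)_j$ so that $d_\X(x^{(k_j)}_N,x^{(k_j)})\le 1/j$ and $\mathscr{F}_{\omega,N}(x^{(k_j)}_N)\le\mathscr{F}(x^{(k_j)})+1/j$ for all $N\ge N_j$, and setting $\hat x_N\eqdef x^{(k_j)}_N$ for $N_j\le N<N_{j+1}$, one obtains $\hat x_N\cvstrong{N\to\infty}{d_\X}x$ together with
\[
    \limsup_{N\to\infty}\mathscr{F}_{\omega,N}(\hat x_N)\le\limsup_{j\to\infty}\mathscr{F}(x^{(k_j)})\le\mathscr{F}(x),
\]
so $(\hat x_N)_N$ is a recovery sequence for $x$. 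Both inequalities thus hold on $\bar\Omega_0$ for every $x$, giving the $\Gamma$-convergence claimed. The main obstacle is exactly the transfer of the $\Gamma$-$\limsup$ inequality from a countable set of points to all of $\X$; I resolve it not by imposing continuity of $\mathscr{F}$, but by selecting the countable index set within $\epi\mathscr{F}$ so that the approximating points carry the right energy, after which the diagonalization transports the recovery sequences to the general point.
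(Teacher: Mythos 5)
Your proposal is correct and takes essentially the same route as the paper: both reduce the problem to a countable set dense in the energy (you extract it from a countable dense subset of $\epi\mathscr{F}$, the paper from separability of $\mathbb{R}\times\dom\mathscr{F}$), intersect the corresponding countably many full-probability events with $\Omega_0$, and then extend the recovery inequality from this countable set to all of $\X$. The only difference is cosmetic: where the paper concludes by invoking the lower semicontinuity of the $\Gamma$-upper limit, you perform the equivalent explicit diagonal extraction of recovery sequences.
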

\begin{proof}
    First we claim that there exists a countable and dense set $\mathscr{D} \subset \X$ which is dense in the energy $\mathscr{F}$, \emph{i.e.}, for each $x \in \X$ there is ${(x_N)}_{N \in \mathbb{N}} \subset \mathscr{D}$ such that 
    \begin{equation}\label{eq.dense_in_energy}
        x_N \cvstrong{N \to \infty}{} x 
        \text{ and } 
        \mathscr{F}(x_N) \cvstrong{N \to \infty}{} \mathscr{F}(x). 
    \end{equation}
    See for instance~\cite[Lemma~11.12]{ambrosio2021lectures} for a constructive argument, a simple proof comes from the fact that $\mathbb{R}\times \dom \mathscr{F}$ is separable as an (arbitrary) subset of the separable space $\mathbb{R}\times \X$, since subsets of second countable spaces are second countable.  

    Hence we can define the set $\bar \Omega_0$ as 
    \[
        \bar \Omega_0 \eqdef 
        \Omega_0 \cap 
        \bigcap_{x \in \mathscr{D}} \Omega_x,
    \]
    where $\Omega_0$ denotes the set where the $\Gamma$-$\liminf$ holds for all points $x \in \X$ and $\Omega_x$ denotes the event in which we can construct recovery sequences for $x$. Since $\mathscr{D}$ is countable, it holds that $\mathbb{P}(\bar \Omega_0) = 1$. 

    To prove the $\Gamma$-convergence for each $\omega \in \bar \Omega_0$, we recall the notions of lower and upper $\Gamma$ limits from Section~\ref{subsec:gamma_convergence}, and to conclude it suffices to prove for all $\omega \in \bar \Omega_0$ that $\Gamma\text{-}\liminf \mathscr{F}_{\omega,N} = \Gamma\text{-}\limsup \mathscr{F}_{\omega,N} = \mathscr{F}$. Indeed, item $(1)$ shows that 
    \[
        \mathscr{F} \le \Gamma\text{-}\liminf \mathscr{F}_{\omega,N}, 
        \text{ for all }
        \omega \in \bar \Omega_0. 
    \]
    On the other hand, from item $(2)$, it follows for any $x \in \mathscr{D}$ that
    \[
        \Gamma\text{-}\limsup \mathscr{F}_{\omega,N}(x)
        \le 
        \mathscr{F}(x), 
        \text{ for all }
        \omega \in \bar \Omega_0. 
    \]
    Hence, for any $\omega \in \bar \Omega_0$ and an arbitrarily $x\in \X$, let ${\left(x_N\right)}_{N \in \mathbb{N}}$ be a sequence in $\mathscr{D}$ satisfying~\eqref{eq.dense_in_energy}, so that using the lower semi-continuity of the $\Gamma$-upper limit we have that
    \begin{align*}
        \Gamma\text{-}\limsup_{N \to \infty} \mathscr{F}_{\omega, N}(x)
        \le 
        \liminf_{n \to \infty}
        \left(
            \Gamma\text{-}\limsup_{N \to \infty} \mathscr{F}_{\omega, N}(x_N)
        \right)
        \le 
        \liminf_{n \to \infty}
        \mathscr{F}(x_N)
        =
        \mathscr{F}(x)
    \end{align*}
    which gives the $\Gamma$-convergence with full probability. 
\end{proof}

To apply this lemma, we know from the Glivenko--Cantelli law of large numbers that empirical measures converge $\mathbb{P}$-almost surely. Hence, we consider the set
\begin{equation}\label{eq:set_full_probability}
    \Omega_0
    \eqdef 
    \left\{
        \omega \in \Omega:
        \substack{
            \displaystyle
            X_i(\omega) \neq X_j(\omega), \text{ for } i \neq j\\ 
            \displaystyle
            \mu_N(\omega) \cvweak{N \to \infty} \mu
        }
    \right\}.
\end{equation}
The first condition above is so that the sequence of functionals $\mathcal{J}_{\omega,N}$ is well-defined for any $\omega \in \Omega_0$. From the fact that $\mu$ is atomless and the above discussion, $\mathbb{P}(\Omega_0) = 1$. 

While the $\Gamma$-liminf argument will be similar to the open-loop information structure, for the $\Gamma$-limsup we will use a construction depending on a sequence of random variations of the form
\begin{equation}
    \begin{aligned}
        &\frac{1}{N}\sum_{i = 1}^N L_i
        + 
        \frac{1}{N^2}\sum_{i\neq j} H_{i,j}\\ 
        \text{ where }
        L_i \eqdef  c(&X_i,Y_i) + L(Y_i), 
        \quad 
        H_{i,j} \eqdef   H(Y_i,Y_j),
    \end{aligned}
\end{equation}
where $(X_i,Y_i)\sim \gamma$. The first sum is fortunately the mean of an i.i.d.~sequence, so that from the law of large numbers it must converge to its mean. On the other hand, this is not the case for ${(H_{i,j})}_{i\neq j}$. The following proposition, whose proof is a synthesis of the ideas from~\cite[Chapter~12]{klenke2013probability}, states that such families of random variables also enjoy a law of large numbers.

\begin{proposition}\label{prop:exchangeable_lln}
    Let ${\left(H_{i,j}\right)}_{i \neq j\in \mathbb{N}}$ be a sequence of random variables obtained as
    \[
       H_{i,j} = {\left(\Phi(X_i, X_j)\right)}_{i\neq j \in \mathbb{N}},    
    \]
    where $\Phi\colon\X \times \X \to \mathbb{R}$ is a symmetric function and ${\left(X_i\right)}_{i \in \mathbb{N}}$ is an i.i.d.~sample. Then
    \[
        \frac{1}{N^2}\sum_{\substack{
            i,j = 1\\
            i \neq j
        }}^N  H_{i,j} \xrightarrow[N \to \infty]{} \mathbb{E}[H_{1,2}], \text{ with probability $1$.}    
    \]
\end{proposition}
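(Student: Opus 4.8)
The plan is to recognize the average on the left-hand side as a $U$-statistic of order two and to exploit its backward (reverse) martingale structure, following the classical argument for the strong law of large numbers for $U$-statistics, which is precisely the circle of ideas in \cite[Chapter~12]{klenke2013probability}. Throughout I assume the mild integrability condition $\mathbb{E}\lvert \Phi(X_1,X_2)\rvert < +\infty$—automatic in the application of interest, where $H$ is continuous and bounded—which is needed for $\mathbb{E}[H_{1,2}]$ to be well-defined. First I would rewrite
\[
    \frac{1}{N^2}\sum_{\substack{i,j=1\\ i\neq j}}^N H_{i,j}
    =
    \frac{N-1}{N}\, U_N,
    \qquad
    U_N \eqdef \frac{1}{N(N-1)}\sum_{\substack{i,j=1\\ i\neq j}}^N \Phi(X_i,X_j),
\]
so that, since the prefactor $(N-1)/N \to 1$, it suffices to prove that $U_N \to \mathbb{E}[\Phi(X_1,X_2)]$ almost surely.

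Next I would introduce, for each $N \ge 2$, the $\sigma$-algebra $\mathcal{S}_N \subset \mathcal{F}$ of events invariant under every permutation of the first $N$ coordinates $(X_1,\dots,X_N)$ (leaving $X_{N+1},X_{N+2},\dots$ untouched). This is a decreasing family, $\mathcal{S}_{N+1}\subset \mathcal{S}_N$, whose intersection $\mathcal{S}_\infty \eqdef \bigcap_{N\ge 2}\mathcal{S}_N$ is the exchangeable $\sigma$-algebra. The crucial and most delicate step is to verify the identity
\[
    \mathbb{E}\left[\Phi(X_1,X_2)\mid \mathcal{S}_N\right] = U_N
    \qquad \text{for all } N\ge 2.
\]
This holds because conditioning a function on $\mathcal{S}_N$ amounts to averaging it over all permutations of $\{1,\dots,N\}$: applying to $\Phi(X_1,X_2)$ the permutation sending $1\mapsto i$, $2\mapsto j$ and using that $(X_1,\dots,X_N)$ is exchangeable (being i.i.d.) produces exactly the mean of $\Phi(X_i,X_j)$ over all ordered pairs of distinct indices $i,j\le N$, namely $U_N$. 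Making this averaging rigorous—identifying the conditional expectation with respect to the symmetric $\sigma$-algebra—is the main obstacle; I would carry it out by testing against bounded $\mathcal{S}_N$-measurable functions and invoking the permutation-invariance of the joint law of $(X_i)_{i\le N}$.

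The identity shows that $(U_N,\mathcal{S}_N)_{N\ge 2}$ is a backward martingale. By the backward martingale convergence theorem, $U_N$ then converges almost surely and in $L^1$ to $\mathbb{E}[\Phi(X_1,X_2)\mid \mathcal{S}_\infty]$. Finally, since $(X_i)_{i\in\mathbb{N}}$ is i.i.d., the Hewitt--Savage $0$--$1$ law guarantees that the exchangeable $\sigma$-algebra $\mathcal{S}_\infty$ is $\mathbb{P}$-trivial, so that $\mathbb{E}[\Phi(X_1,X_2)\mid \mathcal{S}_\infty] = \mathbb{E}[\Phi(X_1,X_2)] = \mathbb{E}[H_{1,2}]$ almost surely. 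Combining this with the initial reduction yields
\[
    \frac{1}{N^2}\sum_{\substack{i,j=1\\ i\neq j}}^N H_{i,j}
    = \frac{N-1}{N}\,U_N
    \xrightarrow[N\to\infty]{} \mathbb{E}[H_{1,2}]
    \qquad \text{with probability } 1,
\]
which is the claimed statement.
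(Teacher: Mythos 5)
Your proposal is correct and follows essentially the same route as the paper's own proof: identifying the average as a backward martingale with respect to the exchangeable (symmetric) $\sigma$-algebras, verifying the conditional-expectation identity by testing against bounded symmetric functions, and concluding via the backward martingale convergence theorem and the Hewitt--Savage $0$--$1$ law. Your two small additions---making the normalization factor $(N-1)/N$ explicit and stating the integrability hypothesis $\mathbb{E}\lvert\Phi(X_1,X_2)\rvert<+\infty$ that the paper leaves implicit---are sound refinements rather than deviations.
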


For the sake of readability of the main ideas employed to prove the $\Gamma$-convergence result, we postpone the proof of the previous proposition to Appendix~\ref{sec:appendix}. Gathering these elements, we conclude with the following $\Gamma$-convergence result. We provide two constructions of recovery sequences. The first one which is analytical and directly provides a recovery sequence for a sequence of games in mixed strategies. From the second one, probabilistic in nature, we obtain a sequence of strategy profiles in pure strategies. This little detail means that the passage to the $\Gamma$-limit in pure strategies automatically performs the relaxation to mixed strategies. 

\begin{theorem}\label{theorem:closed_loop_gamma_conv}
    With full $\mathbb{P}$-probability, the sequences of functionals ${\left(\mathcal{J}_{\omega,N}\right)}_{n \in \mathbb{N}}$ and ${\left(J_{\omega,N}\right)}_{n \in \mathbb{N}}$, defined in~\eqref{eq:potential_function_closedloop} and~\eqref{eq:potential_function_closedloop_pure}, converge to $\mathcal{J}$ in the sense of $\Gamma$-convergence for the narrow topology of $\mathscr{P}(\X\times\Y)$. 
\end{theorem}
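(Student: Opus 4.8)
The plan is to invoke the transfer principle of Lemma~\ref{lemma.Gamma_conv_prob1}, which reduces the almost-sure $\Gamma$-convergence to two separate tasks: an almost-sure $\Gamma$-$\liminf$ inequality valid along \emph{every} converging sequence, and, for each \emph{fixed} target $\gamma$, the almost-sure existence of a recovery sequence. Throughout I work on the full-probability event $\Omega_0$ from~\eqref{eq:set_full_probability}, on which the realizations $x_i \eqdef X_i(\omega)$ are pairwise distinct—so that the representation~\eqref{eq:disintegration_representation} and the functionals $\mathcal{J}_{\omega,N}$, $J_{\omega,N}$ are well defined—and on which $\mu_N(\omega) \cvweak{N \to \infty} \mu$. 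Since $J_{\omega,N}$ agrees with $\mathcal{J}_{\omega,N}$ on pure empirical measures and equals $+\infty$ elsewhere, it dominates $\mathcal{J}_{\omega,N}$ pointwise; hence the $\Gamma$-$\liminf$ inequality for $J_{\omega,N}$ follows from the one for $\mathcal{J}_{\omega,N}$, and I only need to produce \emph{pure} recovery sequences to treat both functionals at once.

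For the $\Gamma$-$\liminf$, fix $\omega \in \Omega_0$ and $\gamma_N \cvweak{N \to \infty} \gamma$ with $\liminf_N \mathcal{J}_{\omega,N}(\gamma_N) < +\infty$; along a subsequence attaining the liminf the energies are bounded, so $\gamma_N \in \mathscr{P}_{\mu_N(\omega)}(\X\times\Y)$ and, passing to the limit in the first marginal, $\gamma \in \mathscr{P}_\mu(\X\times\Y)$. I then repeat the truncation argument of Theorem~\ref{theorem:gamma_conv_openloop}: with $H^M \eqdef H \wedge M$ and the associated $\mathcal{H}^M$, I rewrite the pairwise sum as $\mathcal{H}^M(\nu_{\gamma_N},\nu_{\gamma_N}) - \frac{1}{N^2}\sum_i \mathcal{H}^M(\nu_i,\nu_i)$, where $\nu_{\gamma_N} = (\pi_\Y)_\sharp \gamma_N$; the self-interaction correction is bounded by $M/N$, so lower semicontinuity of $\mathcal{J}^M$ (Lemma~\ref{lemma.lsc_narrow}) gives $\liminf_N \mathcal{J}_{\omega,N}(\gamma_N) \ge \mathcal{J}^M(\gamma)$, and letting $M \to \infty$ by monotone convergence yields $\mathcal{J}(\gamma)$. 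This step is deterministic given $\omega \in \Omega_0$ and therefore holds on all of $\Omega_0$, furnishing hypothesis~(1) of Lemma~\ref{lemma.Gamma_conv_prob1}.

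It remains to construct, for a fixed $\gamma = \mu \otimes \nu^x$ with $\mathcal{J}(\gamma) < +\infty$ (otherwise there is nothing to prove), a recovery sequence on a full-probability event $\Omega_\gamma$. \emph{Analytical construction (mixed strategies).} I set $\gamma_N \eqdef \frac{1}{N}\sum_{i=1}^N \delta_{x_i}\otimes \nu^{x_i} \in \mathscr{P}_{\mu_N(\omega)}(\X\times\Y)$. Testing against the countable Lipschitz family of Proposition~\ref{prop:criterion_narrow_conv_countable} and applying the strong law of large numbers to each bounded average $\frac{1}{N}\sum_i \int_\Y f_k(x_i,y)\dd\nu^{x_i}(y)$ gives $\gamma_N \cvweak{N\to\infty}\gamma$ on a full-probability event. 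For the energy, the individual terms $\frac{1}{N}\sum_i \bigl(\int_\Y c(x_i,y)\dd\nu^{x_i} + \mathcal{L}(\nu^{x_i})\bigr)$ converge to $\int (c+L)\dd\gamma$ by the strong law, while the interaction term $\frac{1}{N^2}\sum_{i\ne j}\Phi(x_i,x_j)$, with the symmetric, nonnegative, $\mu\otimes\mu$-integrable kernel $\Phi(x,x')\eqdef \mathcal{H}(\nu^x,\nu^{x'})$, converges to $\mathbb{E}[\Phi(X_1,X_2)] = \mathcal{H}(\nu,\nu)$ by Proposition~\ref{prop:exchangeable_lln}; altogether $\mathcal{J}_{\omega,N}(\gamma_N) \to \mathcal{J}(\gamma)$. \emph{Probabilistic construction (pure strategies).} On an enriched space I draw $Y_i \sim \nu^{X_i}$ conditionally independently, so that $(X_i,Y_i)$ is i.i.d.\ with law $\gamma$; Glivenko--Cantelli gives $\frac{1}{N}\sum_i \delta_{(X_i,Y_i)}\cvweak{N\to\infty}\gamma$, the strong law handles the $c+L$ average, and Proposition~\ref{prop:exchangeable_lln} applied to the kernel $((x,y),(x',y'))\mapsto H(y,y')$ gives $\frac{1}{N^2}\sum_{i\ne j}H(Y_i,Y_j)\to\mathcal{H}(\nu,\nu)$. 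The resulting event has full probability on the enriched space, so by Fubini, for $\mathbb{P}$-a.e.\ $\omega$ there is a realization $(y_i)$ producing a \emph{pure} recovery sequence; this verifies hypothesis~(2) for both $J_{\omega,N}$ and $\mathcal{J}_{\omega,N}$, and Lemma~\ref{lemma.Gamma_conv_prob1} then yields the claimed $\Gamma$-convergence with full probability.

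The main obstacle is the interaction term: the double sum $\frac{1}{N^2}\sum_{i\ne j}$ runs over pairwise-dependent (though exchangeable) summands, so the classical strong law does not apply and one must appeal to the law of large numbers for symmetric pairwise interactions, Proposition~\ref{prop:exchangeable_lln}. A secondary difficulty, specific to the analytical recovery sequence, is that $x\mapsto\nu^x$ is only measurable, so narrow convergence of $\gamma_N$ cannot be read off continuity and is instead obtained by combining the countable test-function criterion with the strong law, which is precisely what allows all the exceptional null sets to be absorbed into a single full-probability event.
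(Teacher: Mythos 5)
Your proof is correct and follows the paper's argument essentially step for step: the same transfer Lemma~\ref{lemma.Gamma_conv_prob1} applied on the event $\Omega_0$ from~\eqref{eq:set_full_probability}, the same truncation $H^M \eqdef H \wedge M$ with the $M/N$ self-interaction correction for the $\Gamma$-$\liminf$, and the same two recovery constructions (mixed, via $\gamma_N = \mu_N(\omega)\otimes\nu^x$ combined with the countable test-function criterion of Proposition~\ref{prop:criterion_narrow_conv_countable}, the strong law of large numbers, and Proposition~\ref{prop:exchangeable_lln} for the interaction kernel; and pure, via an i.i.d.\ sample $(X_i,Y_i)$ of law $\gamma$). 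The one localized deviation is how you realize that i.i.d.\ sample: you enlarge the probability space, draw $Y_i \sim \nu^{X_i}$ conditionally independently, and extract a pure recovery sequence for $\mathbb{P}$-a.e.\ $\omega$ by a Fubini argument, whereas the paper stays on $\X^{\otimes\mathbb{N}}$ and builds the $Y_i$ as measurable functions of the whole sample via a measure-isomorphism push-forward (using non-atomicity and citing Oxtoby); since hypothesis~(2) of Lemma~\ref{lemma.Gamma_conv_prob1} only requires, for each fixed $\gamma$, the existence of a recovery sequence on a full-probability event, your route is equally valid and arguably more transparent than the paper's push-forward construction. Your explicit observation that $J_{\omega,N} \ge \mathcal{J}_{\omega,N}$ pointwise, so that a single $\Gamma$-$\liminf$ together with \emph{pure} recovery sequences settles both functionals simultaneously, is a useful clarification of a reduction the paper leaves implicit.
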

\begin{proof}
    It suffices to verify the hypotheses of Lemma~\ref{lemma.Gamma_conv_prob1}. To prove item (1), consider $\omega \in \Omega_0$ defined above in~\eqref{eq:set_full_probability}, and let ${\left(\gamma_N\right)}_{N \in \mathbb{N}}$ be a sequence such that $\gamma_N \in \mathscr{P}_{\mu_N(\omega)}(\X\times\Y)$ and converging to $\gamma$. So we can assume that $\gamma_N$ can be written as $\gamma_N = \displaystyle \frac{1}{N}\sum_{i=1}^N \delta_{x_i}\otimes\nu_{i,N}$, where $\nu_{i,N} \in \mathscr{P}(\Y)$, and set 
    \[
        \nu_N 
        \eqdef 
        {(\pi_\Y)}_\sharp \gamma_N 
        = 
        \frac{1}{N}\sum_{i = 1}^N \nu_{i,N}.
    \]
    For any $\omega \in \Omega_0$, it follows that $\gamma \in \mathscr{P}_\mu(\X\times\Y)$.

    Given $M>0$, define $\displaystyle \mathcal{H}^M(\nu,\nu) \eqdef \int_{\Y\times\Y} H\wedge M \dd \nu\otimes \nu$, so that
    \begin{align*}
        \mathcal{J}_{\omega,N}(\gamma_N) 
        &\ge 
        \int_{\X\times \Y}c \dd \gamma_N + 
        \underbrace{
            \frac{1}{N}
            \sum_{i=1}^N 
            \mathcal{L}(\nu_{i,N})
        }_{= \mathcal{L}(\nu_N)} \\
        & \quad \quad \quad \quad \quad 
        + 
        \underbrace{
            \frac{1}{N^2}
            \sum_{i,j} 
            \mathcal{H}^M(\nu_{i,N}, \nu_{j,N})
        }_{= \mathcal{H}^M(\nu_N,\nu_N) }
        - 
        \frac{1}{N^2}\sum_{i = 1}^N\mathcal{H}^M(\nu_{i,N},\nu_{i,N})
        \\ 
        &=
        \int_{\X\times \Y}c \dd \gamma_N + \mathcal{L}(\nu_N) + \mathcal{H}^M(\nu_N,\nu_N) 
        - 
        \frac{1}{N^2}\sum_{i = 1}^N\mathcal{H}^M(\nu_{i,N},\nu_{i,N})\\ 
        &\ge
        \int_{\X\times \Y}c \dd \gamma_N + \mathcal{L}(\nu_N) + \mathcal{H}^M(\nu_N,\nu_N) 
        - 
        \frac{M}{N}. 
    \end{align*} 
    As a result, since, for each $M>0$, the term $M/N$ on the RHS vanishes as $N \to \infty$, combined with the lower semi-continuity of the remaining terms w.r.t.~narrow convergence, as in Lemma~\ref{lemma.lsc_narrow}, we obtain that
    \begin{align*}
        \liminf_{N \to \infty}  
        \mathcal{J}_{\omega,N}(\gamma_N) 
        &\ge
        \liminf_{N \to \infty}  
        \int_{\X\times \Y}c \dd \gamma_N + \mathcal{L}(\nu_N) + \mathcal{H}^M(\nu_N,\nu_N)\\ 
        &\ge
        \int_{\X\times\Y}c\dd \gamma + \mathcal{L}(\nu) + \mathcal{H}^M(\nu,\nu).
    \end{align*}
    From the monotone convergence theorem,  $\mathcal{H}(\nu, \nu) = \displaystyle \sup_{M>0}\mathcal{H}^M(\nu, \nu)$, and the $\Gamma$-$\liminf$ follows. 

    To verify property (2) from Lemma~\ref{lemma.Gamma_conv_prob1}, fix some $\gamma \in \mathscr{P}_\mu(\X\times\Y)$ and let $\nu = {\left(\pi_\Y\right)}_\sharp\gamma$. As mentionned above, we provide two constructions for recovery sequences.
    
    \smallskip
    
    \noindent\uline{\emph{Analytical construction in mixed strategies.}} By an application of the disintegration theorem we write $\gamma = \nu^x\otimes \mu$ for some Borel family ${(\nu^x)}_{x \in \X}$, \emph{i.e.},
    \[
        \int_{\X\times \Y} \varphi(x,y)\dd\gamma
        =
        \int_\X\left(\int_\Y \varphi(x,y)\dd \nu^x(y)\right)\dd \mu(x), 
        \text{ for all }\varphi \in \mathscr{C}_b(\X\times \Y).      
    \]
    This disintegration family is only $\mu$-a.e.~uniquely defined, but we can fix one such family and, for each $\omega \in \Omega$, define a new transportation plan as $\gamma_N\eqdef \mu_N(\omega)\otimes\nu^x$. Since we have fixed one disintegration family, $\gamma_N \in \mathscr{P}_{\mu_N(\omega)}(\X\times\Y)$ is well-defined for every event $\omega = {(x_i)}_{i \in \mathbb{N}}$. From the definition, it then holds that
    \[
        \int_{\X \times \Y} \varphi(x,y)\dd \gamma_N 
        \eqdef 
        \frac{1}{N}\sum_{i=1}^N \int \varphi(x_i, y)\dd \nu^{x_i}, \text{ for all $\varphi \in \mathscr{C}_b(\X\times \Y)$.}
    \]
    Hence $\gamma_N \in \Pi(\mu_N, \nu_N)$ where 
    $\displaystyle
        \mu_N = \frac{1}{N}\sum_{i=1}^N\delta_{x_i}, \ 
        \nu_N = \frac{1}{N}\sum_{i=1}^N\nu^{x_i}.
    $
    
    Let us prove that $\gamma_{N}$ converges narrowly to $\gamma$ will full probability; indeed from Proposition~\ref{prop:criterion_narrow_conv_countable} we know there is a countable set $\mathcal{K} \subset \mathscr{C}_b(X)$, such that, to prove narrow convergence, it suffices to verify that
    \[
        \int_{\X\times \Y} f(x,y) \dd\gamma_{N} 
        \xrightarrow[N \to \infty]{} 
        \int_{\X\times \Y} f(x,y) \dd\gamma, 
        \text{ for all } f\in \mathcal{K}. 
    \]
    For each $f \in \mathcal{K}$, we compute 
    \[
        \int_{\X\times \Y} f(x,y) \dd\gamma_{N} = \frac{1}{N}\sum_{i = 1}^N\int_{\X\times \Y} f(x_i,y) d \nu^{x_i}(y).    
    \]
    Hence, each term of the sum on the right is a realization of the i.i.d.~sequence of random variables $\displaystyle F_i \eqdef \int_\Y f(X_i,\cdot) \dd \nu^{X_i}$. From the strong law of large numbers, it holds with probability 1 that
    \[
        \int_{\X\times \Y} f \dd\gamma_{N} = \frac{1}{N}\sum_{i = 1}^N F_i(\omega) \xrightarrow[N \to \infty]{}
        \mathbb{E}\left[F_1\right] = \int_X f \dd \gamma. 
    \]
    Let $\Omega_{\gamma,f}$ denote the set of probability 1, which depends on $\gamma$ and $f$, where the above converge holds. Then, defining 
    \[
        \tilde \Omega_\gamma = \Omega_0\cap \bigcap_{f \in \mathcal{K}} \Omega_{\gamma,f}, 
    \]
    we have that $\mathbb{P}(\tilde \Omega_\gamma) = 1$ and, for any $\omega \in \tilde \Omega_\gamma$ it holds that $\gamma_{N} \xrightharpoonup[N \to \infty]{} \gamma$. 

    We now apply a similar argument to the convergence of the energies. Indeed, writing 
    \[
        \mathcal{J}_{\omega,N}(\gamma_N) 
        = 
        \frac{1}{N}\sum_{i = 1}^N 
        \int_\Y
        \left(
            c(x_i, y) + L(y)
        \right)\dd \nu^{x_i}(y) 
        + 
        \frac{1}{N^2} 
        \sum_{j\neq i} 
        \int_{\Y\times \Y} H\dd\nu^{x_i}\otimes\nu_{j,N}.
    \]
    We see that the first sum is the empirical average of the i.i.d.~sequence of random variables $\displaystyle L_i \eqdef \int_\Y (c(X_i, y) + L(y))\dd \nu^{X_i}(y)$ while the double sum can be written in terms of the sequence $H_{i,j} \displaystyle \eqdef \int_{\Y\times\Y} H \dd \nu^{X_i}\otimes \nu^{X_j}$. As a consequence, applying once again the strong law of large numbers, there is a set $\Omega_{L,\gamma}$ with probability $1$, such that for any $\omega \in \Omega_{L,\gamma}$ it holds that
    \begin{align*}
        \frac{1}{N}\sum_{i = 1}^N L_i(\omega)
        \xrightarrow[N \to \infty]{} 
        \mathbb{E}[L_1]    
        &=
        \int_\X 
        \left[
            \int_\Y (c(x, y) + L(y))\dd \nu^{x}(y)     
        \right]\dd \mu(x)\\ 
        &= 
        \int_{\X\times \Y}(c(x,y) + L(y))\dd \gamma 
        = 
        \int_{\X\times\Y} c\dd \gamma + \mathcal{L}(\nu).
    \end{align*}

    For the second term, the random variables ${(H_{i,j})}_{i \neq j}$ are no longer i.i.d., but satisfy the hypothesis of Theorem~\ref{prop:exchangeable_lln} with $\Phi$ given by 
    \[
        \Phi(x_1, x_2) \eqdef \int_{\Y\times\Y} H\dd \nu^{x_1}\otimes\nu^{x_2},    
    \]
    which is symmetric and measurable from the measurability of the family ${(\nu^x)}_{x \in \X}$. We conclude that there is another set $\Omega_{H,\gamma}$ with probability $1$ such that for all $\omega \in \Omega_{H,\gamma}$ it holds that 
    \[
        \frac{1}{N^2} 
        \sum_{j\neq i} 
        \int_{\Y\times \Y} H\dd\nu^{x_i}\otimes\nu_{j,N}
        \xrightarrow[N \to \infty]{}
        \mathbb{E}[H_{1,2}] 
        =
        \mathcal{H}(\nu,\nu).
    \]

    Finally, the set $\Omega_\gamma \eqdef \tilde \Omega_\gamma \cap \Omega_{L,\gamma} \cap \Omega_{H,\gamma}$ has probability $1$ and satisfies item (2). From the thesis of Lemma~\ref{lemma.Gamma_conv_prob1}, the $\Gamma$-convergence with full $\mathbb{P}$-probability follows.
   
\smallskip

\noindent\uline{\emph{Probabilistic construction in pure strategies.}} Recalling our original sample $\mathbf{X} \eqdef {(X_i)}_{i \in \mathbb{N}}$, we claim that there exists an i.i.d.\ sequence ${\left(Y_i\right)}_{i \in \mathbb{N}}$ such that
\[
    {(X_i,Y_i)}_{i \in \mathbb{N}} \text{ is i.i.d.\ with law } \gamma. 
\]
Doing as in the proof of Theorem~\ref{theorem:gamma_conv_openloop}, we define the new probability space 
\[
    (\Omega', \mathcal{F}', \mathbb{P}'), \text{ where } 
    \Omega' \eqdef {\X}^{\otimes \mathbb{N}}, \ 
    \mathbb{P}' \eqdef {\mu}^{\otimes \mathbb{N}}.
\]
Since by construction $\mathbb{P}'$ is non-atomic, for instance from~\cite[Theorem~2]{oxtoby1970homeomorphic},
there exists a map $\mathbf{T}\colon \mathbf{x} \in {\X}^{\otimes \mathbb{N}} \mapsto {\left( S_{i}(\mathbf{x}), T_{i}(\mathbf{x}) \right)}_{i \in \mathbb{N}}$ performing the push-forward 
\[
    {\mathbf{T}}_\sharp \mathbb{P}' = {\gamma}^{\otimes \mathbb{N}}, 
\]
in such a way that the sequence $\mathbf{T}(\mathbf{X})$ is i.i.d.\ with law $\gamma$. It follows that $S_i$ must be given by the projection onto the index $i$ and hence $\mathbf{T}(\mathbf{X}) = {\left(X_i, Y_i\right)}_{i \in \mathbb{N}}$, as we wished.

As a consequence of independence, the Glivenko--Cantelli law of large numbers implies 
\begin{equation}\label{eq:weak_conv_gamma}
    \gamma_N
    \eqdef
    \frac{1}{N}\sum_{i = 1}^N \delta_{(X_i(\omega),Y_i(\omega))}
    \cvweak{N \to \infty} 
    \gamma
\end{equation}
with full probability. To obtain the convergence of the energies, namely
\begin{align}
    \label{eq:conv_energies_gamma}
    \frac{1}{N} \sum_{i=1}^N c(X_i,Y_i) 
    &+ L(Y_i) 
    + 
    \frac{1}{N^2} \sum_{i \neq j} H(Y_i,Y_j) 
    \cvstrong{N \to \infty}{}
    \mathcal{J}(\gamma),
\end{align}
we consider the sequence of real-valued random variables
\[
    \frac{1}{N} \sum_{i=1}^N c(X_i,Y_i) + L(Y_i) 
    + 
    \frac{1}{N^2} \sum_{i \neq j} H(Y_i,Y_j). 
\]
The first sum corresponds to an empirical mean of an i.i.d.\ sequence, so that from the classical law of large numbers one has, with full probability, that
\[
    \frac{1}{N} \sum_{i=1}^N c(X_i,Y_i) + L(Y_i) 
    \cvstrong{N \to \infty}{}
    \mathbb{E}\left[
        c(X_1,Y_1) + L(Y_1)
    \right] 
    = 
    \int_{\X\times\Y} c\dd\gamma 
    + 
    \int_\Y L\dd\nu. 
\]
The second double sum is not i.i.d., but, applying Proposition~\ref{prop:exchangeable_lln}, we obtain that, with full probability, 
\[
    \frac{1}{N^2} \sum_{i \neq j} H(Y_i,Y_j)   
    \cvstrong{N \to \infty}{}
    \mathbb{E}\left[
        H(Y_1,Y_2)
    \right] 
    = 
    \int_{\Y\times\Y} H\dd \nu\otimes\nu. 
\]

Intersecting these sets of full probability, we obtain a set $\Omega_{\gamma}$ depending on the measure $\gamma$ where both~\eqref{eq:weak_conv_gamma} and~\eqref{eq:conv_energies_gamma} hold and $\mathbb{P}(\Omega_{\gamma}) = 1$. From the thesis of Lemma~\ref{lemma.Gamma_conv_prob1}, the $\Gamma$-convergence with full $\mathbb{P}$-probability follows.
\end{proof}

With an analogous proof to the open-loop case, we obtain a result assuring, with full $\mathbb{P}$-probability, the convergence of a particular sequence of Nash equilibria to equilibria of Cournot--Nash type, and whenever $H$ is continuous the convergence of any sequence of Nash equilibria. 

\begin{theorem}\label{theorem:nash2cournotnash_closedloop}
    Under the same assumptions of Theorem~\ref{thm:minimizer_is_equilibria}, with full $\mathbb{P}$-pro\-ba\-bi\-lity, there are sequences of Nash equilibria for the game~\eqref{eq:game_cl_mix}, described by transportation plans ${\left(\gamma_N\right)}_{N \in \mathbb{N}}$ converging in the narrow topology, up to a subsequence, to a Cournot--Nash equilibrium $\gamma\in\mathscr{P}_\mu(\X\times\Y)$ as in Definition~\ref{def:equ_CournotNash}. 
    
    Assuming in addition that $H \in \mathscr{C}_b(\Y\times \Y)$, with $\mathbb{P}$-full probability, for any sequence of Nash equilibria ${\left(\gamma_N\right)}_{N \in \mathbb{N}}$ from game~\eqref{eq:game_cl_mix}, that is 
    \begin{equation*}
        \gamma_N 
        \eqdef 
        \frac{1}{N}\sum_{i = 1}^N \delta_{x_i}\otimes \nu_{i,N} \in 
        \mathscr{P}_{\mu_N(\omega)}(\X\times\Y),
    \end{equation*}
    converging to $\gamma$ in the narrow topology of $\mathscr{P}(\X\times\Y)$, it holds that 
    $\gamma \in \mathscr{P}_\mu(\X\times\Y)$, and it is a Cournot--Nash equilibrium in the sense of Definition~\ref{def:equ_CournotNash}. 
\end{theorem}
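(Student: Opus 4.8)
The plan is to reproduce, almost verbatim, the two-step strategy of the open-loop Theorem~\ref{theorem:nash2cournotnash_openloop}: first use $\Gamma$-convergence plus compactness to produce a convergent sequence of minimizers of the potentials, then use a unilateral-deviation argument to identify limits of Nash equilibria as critical points of $\mathcal{J}$. The only structural difference is that every expectation in the open-loop proof must be replaced by an almost-sure statement, so all the relevant convergences have to be arranged to hold simultaneously on a single event of full $\mathbb{P}$-probability. Throughout I work on the event $\Omega_0$ of \eqref{eq:set_full_probability} intersected with the full-probability event on which Theorem~\ref{theorem:closed_loop_gamma_conv} gives $\mathcal{J}_{\omega,N}\cvstrong{N\to\infty}{\Gamma}\mathcal{J}$.

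For the first assertion, since $c,L,H$ are l.s.c.\ and $L$ has compact sub-level sets (Hypothesis~\ref{Hypo-compact}), each $\mathcal{J}_{\omega,N}$ admits a minimizer $\gamma_N$, which by Proposition~\ref{proposition:NP-potential_closed_loop} induces a Nash equilibrium of \eqref{eq:game_cl_mix}. Applying the $\Gamma$-$\limsup$ inequality to a near-optimal competitor shows $\limsup_N\mathcal{J}_{\omega,N}(\gamma_N)\le\inf_{\mathscr{P}_\mu}\mathcal{J}=C<\infty$, so for $N$ large the second marginals $\nu_N$ satisfy $\mathcal{L}(\nu_N)\le\mathcal{J}_{\omega,N}(\gamma_N)\le C+1$. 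Tightness of $(\gamma_N)_{N\in\mathbb N}$ then follows exactly as in the open-loop case: the first marginal is $\mu_N(\omega)\cvweak{N\to\infty}\mu$, hence tight, while Markov's inequality together with the compactness of the sub-level sets of $L$ produces a compact $K_{\Y,\varepsilon}\subset\Y$ carrying all but $\varepsilon$ of the mass of $\nu_N$. By Prokhorov's theorem (Theorem~\ref{thm:Prokhorov}) I extract a narrowly convergent subsequence $\gamma_{N_k}\cvweak{N_k\to\infty}\gamma$; its first marginal is $\mu$, so $\gamma\in\mathscr{P}_\mu(\X\times\Y)$, and the cluster-point property of $\Gamma$-limits forces $\gamma$ to minimize $\mathcal{J}$. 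Theorem~\ref{thm:potential_structure_cournot_nash}(ii), invoking the technical condition~\eqref{Hypo-technical}, then identifies $\gamma$ as a Cournot--Nash equilibrium.

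For the second assertion, with $H\in\mathscr{C}_b(\Y\times\Y)$ the functional $\mathcal{J}$ admits the first variation $\Phi$ of~\eqref{eq.case_paper}, and $\mathcal{H}$ becomes narrowly continuous (combining boundedness and continuity of $H$ with the product-convergence statement of Lemma~\ref{lemma.lsc_narrow}). I fix a countable family $\mathcal{D}\subset\dom\mathcal{J}\cap\mathscr{P}_\mu(\X\times\Y)$ dense in energy with respect to $\mathcal{J}$, as produced in the proof of Lemma~\ref{lemma.Gamma_conv_prob1}, and for each competitor $\bar\gamma=\mu\otimes\bar\nu^x\in\mathcal{D}$ run the deviation argument: letting player $i$ switch from $\nu_{i,N}$ to $\bar\nu^{x_i}$, the Nash inequality for $J_{\omega,i}$, averaged over $i$, compares---up to diagonal terms of order $1/N$ that are bounded because $H$ is bounded---the energy of the competitor plan $\bar\gamma_N\eqdef\frac1N\sum_{i=1}^N\delta_{x_i}\otimes\bar\nu^{x_i}$ with that of $\gamma_N$. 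On the full-probability event $\Omega_{\bar\gamma}$ supplied by the strong law of large numbers one has $\bar\gamma_N\cvweak{N\to\infty}\bar\gamma$ and $\int(c+L)\dd\bar\gamma_N\to\int(c+L)\dd\bar\gamma$; combined with the given convergence $\gamma_N\cvweak{N\to\infty}\gamma$, the narrow continuity of the bounded interaction terms (via product convergence) lets me pass to the limit in the interaction part, while lower semicontinuity of $\int(c+L)\dd\cdot$ controls from below the single $\gamma_N$-term that does not converge. The outcome is the criticality inequality $\inner{\frac{\delta\mathcal{J}}{\delta\gamma}(\gamma),\bar\gamma-\gamma}\ge0$.

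Intersecting $\Omega_0$ with the countably many events $\Omega_{\bar\gamma}$, $\bar\gamma\in\mathcal{D}$, yields one event of full probability on which this inequality holds for every $\bar\gamma\in\mathcal{D}$ and every convergent sequence of Nash equilibria. To upgrade it to an arbitrary finite-energy competitor $\bar\gamma'$ (infinite-energy competitors being trivial since $\Phi\ge0$), I approximate $\bar\gamma'$ by $\bar\gamma^{(k)}\in\mathcal{D}$ with $\bar\gamma^{(k)}\cvweak{k\to\infty}\bar\gamma'$ and $\mathcal{J}(\bar\gamma^{(k)})\to\mathcal{J}(\bar\gamma')$; since $\mathcal{H}$ is narrowly continuous this forces $\int(c+L)\dd\bar\gamma^{(k)}\to\int(c+L)\dd\bar\gamma'$, and the bounded interaction term $2\int H(y,\bar y)\dd\nu(\bar y)$ passes to the limit by narrow convergence, so that $\int\Phi(x,y,\nu)\dd\bar\gamma^{(k)}\to\int\Phi(x,y,\nu)\dd\bar\gamma'$ and the criticality inequality survives. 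Hence $\gamma$ is a critical point of $\mathcal{J}$, and Theorem~\ref{thm:potential_structure_cournot_nash}(i) identifies it as a Cournot--Nash equilibrium in the sense of Definition~\ref{def:equ_CournotNash}. I expect the main obstacle to be exactly this last device: unlike the open-loop setting, where taking an expectation handles all competitors at once, here one must engineer a single full-probability event serving every one of the uncountably many competitors $\bar\gamma$, and the density-in-energy reduction to a countable family is what makes this possible.
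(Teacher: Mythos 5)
Your proposal is correct and follows essentially the same route the paper intends: the paper gives no separate proof of Theorem~\ref{theorem:nash2cournotnash_closedloop}, stating only that it is ``analogous to the open-loop case,'' and your write-up is exactly that analogy carried out --- tightness via Hypothesis~\ref{Hypo-compact} and Prokhorov, the $\Gamma$-limit cluster-point property with Theorem~\ref{thm:potential_structure_cournot_nash}(ii) for the first assertion, and the averaged unilateral-deviation argument with Theorem~\ref{thm:potential_structure_cournot_nash}(i) for the second. Your resolution of the one genuinely new difficulty --- replacing the open-loop expectation by a single full-probability event via a countable energy-dense family of competitors, with SLLN-based recovery sequences $\bar\gamma_N = \frac{1}{N}\sum_{i=1}^N \delta_{x_i}\otimes\bar\nu^{x_i}$ --- is precisely the mechanism the paper itself deploys in Lemma~\ref{lemma.Gamma_conv_prob1} and in the closed-loop $\Gamma$-$\limsup$ construction of Theorem~\ref{theorem:closed_loop_gamma_conv}, so it matches the intended argument rather than departing from it.
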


\section{Conclusion}\label{sec.conclusion}
If anything, the convergence of Nash to Cournot--Nash equilibria demonstrates how difficult the convergence question in the context of Mean Field Games is. The $\Gamma$-convergence approach relies entirely on the fact that a variational description of equilibria is provided in Theorem~\ref{thm:potential_structure_cournot_nash} and is not useful to non-variational games, for which we have only fixed point techniques at our disposal. The following questions then present themselves:

\begin{itemize}
    \item Can we consider other types of energy? The analysis seems very specific to an energy that is the sum of an individual and a pairwise interaction costs. 
    \begin{itemize}
        \item Using the characterization of convex functions as the envelope of all linear functions below it, one could try to adapt the arguments of the linear term to the case of an individual convex energy. 
        \item In principle, the arguments treating the pairwise interaction term could be extended to a $k$-wise interaction, as long as the number of players interacting remains uniformly bounded, as in this case an analogous law of large numbers from Appendix~\ref{sec:appendix} holds. 
    \end{itemize}
    \item Another direction would be to derive a large deviations principle for the Gibbs measures associated with the potential function of the $N$-player games, whose Nash equilibria converge to Cournot--Nash equilibria, in accordance with the statistical mechanics intuition that motivated the original name Mean Field Games.
    \item The passage to the limit of games in pure strategies was achieved in the closed-loop formulation via the probabilistic construction of recovery sequences. The same strategy does not work for the open-loop case since adapting it would give a profile of strategies induced by random variables of the form
    \[
        Y_i = T_{N,i}(X_i, X_{-i}),
    \]
    being therefore suitable for a stochastic closed-loop formulation, which is interesting in its own right, but fails to capture the situation modelled by the open-loop formulation. On the other hand, concluding the $\Gamma$-$\liminf$ for this alternative formulation seems challenging since we do not have the equivalent of Lemma~\ref{lemma:convergente_random_to_deterministic}.
\end{itemize}

\appendix
\section{The law of large numbers for symmetric functions of an i.i.d.\ sample}\label{sec:appendix}
In this appendix we prove Proposition~\ref{prop:exchangeable_lln}. The ideas are a minor modification of the presentation of~\cite{klenke2013probability}, hence our goal is to make it as self-contained as possible to readers less familiarized with probability theory, but we hope it can be useful in other contexts as well. We also observe that this proof remains true if one considers $\Phi\colon\X^{\otimes k} \to \mathbb{R}$, for any $k \in \mathbb{N}$. With this we can now proceed with our $\Gamma$-convergence type result.

\begin{proof}[Proof of Proposition~\ref{prop:exchangeable_lln}]
    First, define the exchangeable $\sigma$-algebra as follows: we say a function $f\colon\mathbb{R}^{\otimes\mathbb{N}} \to \mathbb{R}$ is $n$-symmetric if it is symmetric w.r.t.~permutations of indexes smaller than $n$. In other words, for any permutation $\sigma\colon\mathbb{N}\to \mathbb{N}$ such that $\sigma(k) = k$ for $k> n$, then $f\left({(x_{\sigma(n)})}_{n \in \mathbb{N}}\right) = f\left({(x_{n})}_{n \in \mathbb{N}}\right)$. We then define the exchangeable $\sigma$-algebra as
    \[
        \mathcal{E}_{\infty} \eqdef \bigcap_{n \in \mathbb{N}}\mathcal{E}_n, 
        \text{ where }    
        \mathcal{E}_n \eqdef 
        \sigma\left(
            \left\{
                f\left({(X_i)}_{i \in \mathbb{N}}\right) : 
                \begin{aligned}
                    &f\colon \mathbb{R}^{\otimes\mathbb{N}} \to \mathbb{R}\\ 
                     & \text{ is $n$-symmetric and Borel}
                \end{aligned}
            \right\}
        \right), 
    \]
    where $\sigma\left({\left\{F_i\right\}}_{i \in I}\right)$ is defined as the smallest $\sigma$-algebra that makes the whole family of random variables ${(F_i)}_{i\in I}$ measurable. 

    Take $g \colon \mathbb{R}^{\otimes \mathbb{N}} \to \mathbb{R}$ a bounded and $n$-symmetric function. For all $i \le n$, as the sample ${\left(X_i\right)}_{i\in \mathbb{N}}$ is i.i.d., we have
    \begin{align*}
        & \mathbb{E}\left[ H_{i,j} g\left( X_\cdot \right)\right] \\
        {} = {} &
        \mathbb{E}\left[\Phi(X_i, X_j) g\left(X_1, X_2, X_3, \dots, X_{i-1}, X_i, X_{i+1}, \dots, X_{j-1}, X_j, X_{j+1}, \dots\right)\right]\\ 
        {} = {} &
        \mathbb{E}\left[\Phi(X_1, X_2) g\left(X_i, X_j, X_3, \dots, X_{i-1}, X_1, X_{i+1}, \dots, X_{j-1}, X_2, X_{j+1}, \dots\right)\right]\\ 
        {} = {} &
        \mathbb{E}\left[ H_{1,2} g\left( X_\cdot \right)\right]
    \end{align*}
    In particular, taking $g = 1_A$ for an arbitrary set $A \in \mathcal{E}_n$ and averaging the above equality for all $1 \le i,j \le n$ with $i \neq j$, we obtain that
    \[
        \mathbb{E}\left[
            \frac{1}{n(n-1)}\sum_{i \neq j} H_{i,j} 1_A
        \right] 
        = 
        \mathbb{E}\left[
            H_{1,2} 1_A
        \right], 
        \text{ so that }
        \frac{1}{n(n-1)}\sum_{i \neq j} H_{i,j} 
        = 
        \mathbb{E}\left[H_{1,2} \mid \mathcal{E}_n \right],
    \]
    by the definition of conditional expectation for $L^1$ random variables. This result can also be found in~\cite[Theorem~12.10]{klenke2013probability}. This means that $\frac{1}{n(n-1)}\sum_{i \neq j} H_{i,j}$ is a backwards martingale for the filtration ${\left(\mathcal{E}_n\right)}_{n \in \mathbb{N}}$ and a suitable martingale convergence theorem,~\cite[Theorem~12.14]{klenke2013probability}, gives that
    \[
        \frac{1}{n(n-1)}\sum_{i \neq j} H_{i,j}
         \xrightarrow[n \to \infty]{} 
         \mathbb{E}\left[
            H_{1,2} \mid \mathcal{E}_\infty 
        \right] 
        \text{ with convergence a.s.~and in $L^1$.} 
    \]

    Since ${(X_i)}_{i \in \mathbb{N}}$ is i.i.d., the Hewitt--Savage $0-1$ law, see~\cite[Corollary~12.19]{klenke2013probability} and~\cite{hewitt1955symmetric}, states that $\mathcal{E}_\infty$ is a trivial $\sigma$-algebra, so that for any set $A \in\mathcal{E}_\infty$, $\mathbb{P}(A)$ is either $0$ or $1$. 
    Hence, as $\mathbb{E}\left[H_{1,2}\mid \mathcal{E}_\infty \right]$ is an $\mathcal{E}_\infty$-adapted random variable, it must be given by a constant, given by its mean 
    $\mathbb{E}\left[\mathbb{E}\left[H_{1,2} \mid \mathcal{E}_\infty \right]\right] = \mathbb{E}\left[ H_{1,2}\right]$, and the result follows. 
\end{proof}

\let\OLDthebibliography\thebibliography
\renewcommand\thebibliography[1]{
  \OLDthebibliography{#1}
  \setlength{\parskip}{1pt plus 1pt minus 1pt}
  \setlength{\itemsep}{1pt plus 1pt minus 1pt}
}

\bibliographystyle{abbrv}
\bibliography{main}

\end{document}